\documentclass[12pt]{amsart}

\pagestyle{plain}
\usepackage{amssymb,amsthm,amsmath,txfonts,a4wide}
\usepackage{xcolor}
\usepackage{enumerate}

\usepackage{hyperref}
\pdfstringdefDisableCommands{\def\eqref#1{(\ref{#1})}}

\usepackage{tikz}
\usetikzlibrary{shapes,arrows}
\usepackage{times}
\usepackage{verbatim}

\setcounter{secnumdepth}{4}
\setcounter{tocdepth}{3}
\hypersetup{
	colorlinks,
	citecolor=green,
	filecolor=blue,
	linkcolor=red,
	urlcolor=blue
}

\usepackage[greek,english]{babel}

\newtheorem{defn}{Definition}[section]
\newtheorem{lemma}[defn]{Lemma}
\newtheorem{remark}[defn]{Remark}

\newtheorem{theorem}[defn]{Theorem}
\newtheorem{corollary}[defn]{Corollary}

\newtheorem{definition}[defn]{Definition}
\newtheorem{proposition}[defn]{Proposition}

\newtheorem{claim}[defn]{Claim}

\setlength{\oddsidemargin}{0mm}
\setlength{\evensidemargin}{0mm} \setlength{\topmargin}{-15mm}
\setlength{\textheight}{220mm} \setlength{\textwidth}{155mm}


\numberwithin{equation}{section}

\begin{document}
	\title{Ill-posedness of quintic fourth order Schr\"odinger equation}
	\author{Bo Xia}
	\address{School of Mathematical Sciences, USTC, Hefei, P. R. China}
	\email{xiabomath@ustc.edu.cn, xaboustc@hotmail.com}
	\thanks{}
	\author{Deng Zhang} 
	\address{School of Mathematical Sciences, CMA-Shanghai, Shanghai Jiao Tong University, China.}
	\email{dzhang@sjtu.edu.cn}
	\thanks{}
	
\subjclass[2010]{}

\keywords{}
\subjclass[2010]{35Q55, 35G20.}

\keywords{Frequency cascade, ill-posedness, nonlinear forth-order Schr\"odinger, norm inflation, series expansion}	

	\date{}
	\maketitle 
	
	\begin{abstract}
		We prove that the solution map, associated to the quintic fourth order nonlinear 
		Schr\"odinger equation, exhibits the norm inflation phenomenon at every point in the Sobolev spaces of super-critical regularity. Indeed, we prove this result separately in the cases of negative and of positive regularity. 
		In the negative regularity case, we prove the result for both the defocusing and focusing equations: in the one dimensional case, the associated solution map exhibits norm inflation in Sobolev spaces of super-critical regularity (including the critical index); in the higher dimensional case, the solution map exhibits the same phenomenon in spaces of negative regularity. Meanwhile, in the case of positive regularity, we prove the result for the defocusing equation in dimensions $d=3,4,5$. 
		Our proofs are based on the 
		``high-to-low'' and ``low-to-high'' frequency cascades respectively. 
	\end{abstract}
	
	\maketitle

	\section{Introduction}
	In this article, we will consider the ill-posedness problem 
	for the quintic fourth order Schr\"{o}dinger equation
	\begin{equation}\label{eq:main}
		\left\{
		\begin{split}
			& i\partial_tu+\Delta^2 u+ \mu|u|^4u=0, \\
			& u|_{t=0}=u_0,
		\end{split}
		\right.
	\end{equation}
	where  $\mu =1\ \mathrm{or}\ -1$, and  $u:\mathbb{R}^d\times\mathbb{R}\ni(x,t)\mapsto u(x,t)\in\mathbb{C}$ is the unknown. 
	
	Historically, the fourth order Schr\"{o}dinger equation was proposed to study the propagation of intense laser beams in magnetic materials by taking into account higher order dispersion terms, see   Karpman \cite{MR1372681} and Karpman-Shagalov \cite{MR1779828}. Indeed they proposed the following more general equation
	\begin{equation}\label{eq:4nls}
		i\partial_tu+ \Delta^2 u+\beta \Delta u+ \mu|u|^{2p}u=0,
	\end{equation}
	where $\beta$ and $\mu$ are real numbers, and $p$ is a positive number.  The mathematical study of \eqref{eq:4nls} was started in \cite{benartzi2000} where {Ben-Artizi-Koch-Saut} proved the dispersion estimates for the linear operator in \eqref{eq:4nls}, and in \cite{fibichilanpapanicolaou} where Fibich-Ilan-Papanicolaou analysed the self-focusing and singularity formation for \eqref{eq:4nls}. 
	More recently, Cui \cite{cui2005,cui2006} and Pausader \cite{pausader2007} obtained the Strichartz estimates,
	based on which Guo-Cui \cite{guocui2006,guocui2007b}, Miao-Xu-Zhao \cite{miaoxuzhao2009,miaoxuzhao2011}, and Pausader \cite{pausader2007} 
	considered the Cauchy problem of \eqref{eq:4nls} in the energy space,
	for certain ranges of $\beta,\mu$ and $p$.
	The Cauchy problem for \eqref{eq:4nls} in the low regularity space $H^s$ was also studied, see \cite{guo2010} and \cite{liuzhang2021}.
	
	Although \eqref{eq:4nls} resembles a lot of the classical nonlinear Schr\"{o}dinger equation, several basic questions have not been considered yet. Here, we will restrict ourselves to the simple model \eqref{eq:main} and consider the ill-posedness of \eqref{eq:main} in spaces of low regularity.
	
	We first recall some facts about \eqref{eq:main}. 
	Equation \eqref{eq:main} has two types of symmetries.
	The first one is
	related to the invariance of phase rotation and time translation,
	which correspond respectively,
	to the conservation laws of the mass
	\begin{equation}\label{eq:mass}
		\mathcal{M}[u](t):=\int |u|^2dx
	\end{equation}
	and the energy
	\begin{equation}\label{eq:energy}
		\mathcal{H}[u](t):=\int \left[\frac{1}{2}\left|\Delta u\right|^2 +\frac{\mu}{6}|u|^{6} \right]dx.
	\end{equation}
	The second important symmetry is the scaling invariance:
	if $u$ is a solution to \eqref{eq:main},
	then so is the rescaled function $u_{\lambda}$, defined by
	\begin{equation*}
		u_{\lambda}(x,t):=\frac{1}{\lambda}u\left(\frac{x}{\lambda},\frac{t}{\lambda^4}\right),\ \ \lambda >0.
	\end{equation*}
	In particular, one has
	\begin{equation*}
		\left\|u_\lambda(\cdot,0)\right\|_{\dot{H}^{s_{cr}(d)}}=\left\|u(\cdot,0)\right\|_{\dot{H}^{s_{cr}(d)}},
	\end{equation*}
	where
	\begin{equation} \label{scr-def}
		s_{cr}(d):=\frac{d}{2}-1.
	\end{equation}
	Here $\dot{H}^s$ denotes the homogeneous $L^2$-based Sobolev space of regularity $s$, for the given real number $s$.
	
	It is generally expected that  $s_{cr}(d)$ serves as the critical index
	for the well-posedness of equation \eqref{eq:main}.
	In the case $s_{cr}(d)\geq0$, on the one hand, it is believed that \eqref{eq:main} is well-posed in $H^s$ for $s\geq s_{cr}(d)$, {see \cite{dinh2018,dinh2018b,miaoxuzhao2009,miaoxuzhao2011,pausader2009b}} (see also the recent result by the first author \cite{xia} for \eqref{eq:main} on the circle);
	on the other hand,
	it is believed to be ill-posed in $H^s$ for $s<s_{cr}(d)$. However, in the case $s_{cr}(d)<0$, we can not always insist on such a belief, see the forthcoming Remark \ref{rmk:2} for more details.
	
	We next recall three different notions of the ill-posedness  in the literature.
	
	\begin{definition}\label{def}
		We adapt notions of ill-posedness in \cite{christcolliandertao2003}, \cite{kenigponcevega2001} and {\cite{oh2017}} for canonical equations, to \eqref{eq:main} as follows.
		\begin{itemize}
			\item We say that the equation \eqref{eq:main} does not have unique solutions,
			if there are two different solutions $u_1$ and $u_2$, such that $u_1(0)=u_2(0)$;
			\item The solution map from $H^s$ to $C([-T,T],H^s)$, $T>0$,
			is said to be discontinuous,
			{if for any $\epsilon>0$, there exist a solution $u$ to \eqref{eq:main}  and 
				$t\in [-T,T]\cap (0,\epsilon)$,
				such that }
			\begin{equation}
				\|u(\cdot,t)-u(\cdot, 0)\|_{H^s}<\epsilon,\ \ \mathit{and}\ \ \|u(\cdot,t)\|_{H^s}\gtrsim 1;
			\end{equation}
			\item We say that equation \eqref{eq:main} exhibits the norm inflation phenomenon at $u_0\in H^s$
			if for any $\epsilon>0$, there exist a solution $u$ to \eqref{eq:main} and $t\in  (0,\epsilon)$ such that
			\begin{equation}
				\|u(\cdot,0)-u_0\|_{H^s}<\epsilon,\ \ \mathit{and}\ \ \|u(\cdot, t)\|_{H^s}>\epsilon^{-1};
			\end{equation}
		\end{itemize}
	\end{definition}

	In \cite{kenigponcevega2001},
	Kenig-Ponce-Vega {proved the discontinuity of the solution map associated to the cubic NLS, KdV and mKdV, 
		and they also showed that the 1D cubic NLS cannot have 
		a unique weak solution starting from the delta function if there exists any}. 
	
	For wave equations,
	Lindblad \cite{lindblad1993} proved that the solutions to the 3D wave equation
	exhibit the concentration phenomenon in the physical space,
	and hence the corresponding solution map exhibits the norm inflation phenomenon.
	Afterwards, Lindblad-Sogge \cite{lindbladsogge1995} proved that the 3D wave equation
	is locally well-posed in the subcritical regime,
	while the solution map is not continuous in the supercritical regime.
	Furthermore,  for the 3D wave equation,
	Lebeau \cite{lebeau1999} proved that,
	in the energy super-critical regime,
	the solution map fails to be uniformly continuous in the energy space,
	by invoking the methods from geometric optics.
	
	The norm inflation phenomenon
	at $0$ in $H^s$ for some $s<0$
	was proved by Christ-Colliander-Tao \cite{christcolliandertao2003}
	for a number of canonical equations,
	{including KdV equations} and Schr\"odinger equations.
	The proof in \cite{christcolliandertao2003} is based on the frequency modulation method,
	and has been used to prove the ill-posedness in low regularity spaces
	for more general Hamiltonian equations.
	By capturing the high-to-low frequency cascade in the first Picard iteration,
	Bejenaru-Tao \cite{bejenarutao2006} proved that the solution map of 1D quadratic Schr\"{o}dinger equation
	fails to be continuous from $H^s(\mathbb{R})$ to $C([-T,T],H^{-1}(\mathbb{R})), T>0$, for any $s<-1$.
	Bejenaru-Tao's method was further developed to the method of series expansion in \cite{iwabuchiogawa2014},
	in which Iwabuchi and Ogawa showed that the 2D quadratic Schr\"{o}dinger equation
	exhibits the norm inflation 
	{in $H^s(\mathbb{R}^2)$ with $s\leq-1$}.
	Moreover, Kishimoto \cite{kishimoto2019} showed that the norm  inflation phenomenon
	also occur for the higher dimensional Schr\"{o}digner equations spaces of low regularity.
	See also the work by Choffrut and Pocuvnicu  \cite{choffrutpocovnicu2018}
	for the cubic fractional Schr\"{o}dinger equations, that by Seong \cite{seong2021} for the cubic fourth order Schr\"{o}dinger equation,
	and that by Dinh \cite{dinh2018b} for the fourth order Schr\"{o}dinger equations with more general nonlinearities.
	
	It is worth noting that,
	the norm inflation phenomena in the aforementioned works exhibit at the zero data
	in Sobolev spaces.
	Such phenomena, actually,
	can happen at any data.
	In \cite{oh2017}, Oh affirmed this for the cubic Schr\"{o}dinger equation,
	in the Sobolev spaces below the critical regularity  (including the criticality)
	in the 1D case and in the negative Sobolev spaces in higher dimensions.
	Forlano and Okamoto \cite{forlanookamoto2020} proved similar results in the negative Sobolev spaces
	for semilinear wave equations in all dimensions.
	The norm inflation was also proved by the first author \cite{xia2021}
	for the  3D wave equation
	on torus at any data in Sobolev spaces of certain positive regularity.
	The method in \cite{xia2021} is indeed an ODE-approach, based on the work of Burq-Tzvetkov \cite{burqtzvetkov2008}.
	See also a recent work \cite{suntzvetkov2020} by {Sun-Tzvetkov, concerning the pathological subset in Sobolev space of super-critical regularity.}
	
	In this paper, we aim at proving that the norm inflation phenomenon occurs for \eqref{eq:main}, at any data in $H^s(\mathbb{R}^d)$ {for certain $s$ to be specified.} We state our first result as follows.
	\begin{theorem}\label{thm:main}
		Let $d\geq 1$ be an integer and $s$ a real number such that $s\leq -\frac{1}{2}$ if $d=1$, and   $s<0$ if $d\geq 2$. 
		Then, the equation \eqref{eq:main} exhibits the norm inflation phenomenon at any  $u_0\in H^s$.
	\end{theorem}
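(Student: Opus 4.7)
The strategy is to adapt the power-series/high-to-low frequency cascade technique developed by Bejenaru--Tao and Iwabuchi--Ogawa, combined with Oh's trick for shifting the base point of norm inflation from $0$ to an arbitrary $u_0$, to the present quintic fourth-order setting. The first step is a reduction by density: given $u_0\in H^s$ and $\epsilon>0$, pick a Schwartz approximation $v_0$ of $u_0$ with $\|u_0-v_0\|_{H^s}<\epsilon/2$. It suffices then to prove norm inflation at the smooth datum $v_0$, since $\|u_{0,N}-u_0\|_{H^s}\le \|u_{0,N}-v_0\|_{H^s}+\epsilon/2$ for any perturbation $u_{0,N}$ of $v_0$.

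The core construction is a family of rough perturbations $\phi_N$ concentrated at frequencies of size $\sim N$, so that the initial data is $u_{0,N}=v_0+\phi_N$. Following the series-expansion approach, we require (i) $\|\phi_N\|_{H^s}\to 0$ as $N\to\infty$, so the perturbation is harmless in $H^s$; and (ii) the quintic product $|\phi_N|^{4}\phi_N=\phi_N^{3}\overline{\phi_N}^{2}$ carries a nontrivial low-frequency component which, after the Duhamel integral, produces an $H^{s}$ mass that blows up with $N$. To achieve (ii) we take $\hat{\phi}_N$ to be a superposition of narrow bumps at a resonant $5$-tuple $\eta_{j_1},\dots,\eta_{j_5}$ with $|\eta_{j_i}|\sim N$, satisfying
\[
\eta_{j_1}+\eta_{j_2}+\eta_{j_3}-\eta_{j_4}-\eta_{j_5}=O(1),\qquad |\eta_{j_1}|^{4}+|\eta_{j_2}|^{4}+|\eta_{j_3}|^{4}-|\eta_{j_4}|^{4}-|\eta_{j_5}|^{4}=O(1),
\]
so that the fourth-order phase does not oscillate away the interaction. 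Balancing the amplitude of $\phi_N$ against $\|\phi_N\|_{H^s}\sim N^{\alpha+s}$ then dictates the dimension-dependent threshold: in $d=1$ the resonant tuples are so constrained that only $s\le -1/2$ yields a net gain, whereas for $d\ge 2$ the freedom to rotate the $\eta_{j_i}$ allows any $s<0$.

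The main computation is the Picard/tree expansion for the solution $u_N$ with datum $u_{0,N}$, written schematically as $u_N(t)=\sum_{k\ge 0}\mathcal{T}_k(t;v_0,\phi_N)$, where each $\mathcal{T}_k$ is a sum of $5$-ary trees with leaves labelled by $v_0$ or $\phi_N$. We isolate the distinguished first-iterate tree $\mathcal{T}_1^{*}[\phi_N,\phi_N,\phi_N,\phi_N,\phi_N]$; by the resonant construction above, its low-frequency part has $H^s$ norm $\gtrsim t\cdot N^{\gamma}$ for a positive $\gamma$ and small $t$. Choosing $t_N\to 0$ so that $t_NN^{\gamma}\to\infty$ and $t_N\|\phi_N\|_{H^s}\to 0$ gives an explicit main term that inflates. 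The remaining step is to dominate all other trees --- the higher-order iterates in $\phi_N$ alone, the trees purely in $v_0$, and the mixed trees --- by multilinear estimates exploiting the dispersive decay of $e^{it\Delta^{2}}$, the smoothness of $v_0$, and elementary time-integration bounds, obtaining a geometric series that is $o(1)$ compared to $\mathcal{T}_1^{*}$.

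The principal obstacles are, first, the bookkeeping of the quintic multilinearity (valence five rather than three, so each Picard iterate branches into far more trees than in the cubic case), and second, the more rigid fourth-order phase $|\xi|^4$, which requires a genuinely nontrivial search for $5$-tuples that are simultaneously almost-resonant in the spatial sum and in the fourth powers. A secondary but delicate issue is the treatment of mixed trees containing both leaves of type $v_0$ and of type $\phi_N$: the smoothness of $v_0$ must compensate for the roughness of $\phi_N$ on each mixed leaf, which we plan to achieve by choosing $v_0$ in $H^{\sigma}$ with $\sigma$ arbitrarily large (a free parameter, permitted by the density reduction) and using $L^\infty_{t}L^\infty_{x}$ control of $e^{it\Delta^{2}}v_0$ for short times. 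Once the multilinear bounds are in place, a careful choice of $N$, $t_N$, and of the amplitude and spectral spread of $\phi_N$ closes the argument in both the focusing and defocusing cases, uniformly in $\mu=\pm 1$.
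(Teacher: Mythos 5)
Your overall architecture (density reduction to Schwartz data, perturbation at frequency $\sim N$, $5$-ary tree expansion, isolating the first Picard iterate and dominating all other trees) matches the paper's, but the two steps that actually carry the theorem are left as assertions, and the mechanism you propose for the key lower bound is both unnecessary and, as described, unsubstantiated. You require a $5$-tuple that is simultaneously almost-resonant in the frequency sum and in the fourth powers, "so that the phase does not oscillate away the interaction," and you attribute the threshold $s\le-\tfrac12$ in $d=1$ versus $s<0$ in $d\ge2$ to the scarcity of such tuples in one dimension. Neither point survives scrutiny. First, no resonance engineering is needed: the paper takes $\hat\phi_n=R\left(1_{N\mathbf{e}_1+Q_A}+1_{2N\mathbf{e}_1+Q_A}\right)$ as in \eqref{eq:112707} and simply restricts to times $t\ll N^{-4}$, so that $\left|\tau\left(|\xi|^4-|\xi_1|^4+\cdots-|\xi_5|^4\right)\right|\ll1$ for \emph{every} frequency configuration in the support; the Duhamel phase then contributes a factor $\ge\tfrac12 t$ and the convolution lower bound of Lemma \ref{lem:112705} gives $\|\Theta_1(\phi_n)(t)\|_{H^s}\ge tR^5A^{4d}f_s(A)$ (Lemma \ref{prop:lower:bound}). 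Second, your heuristic for the dimension dependence is wrong in substance: almost-resonant tuples are plentiful even in $d=1$ (two equations in the five real parameters), and the actual source of the threshold is the quantitative balancing of the parameters $R,A,N,t_n$ against the weight $f_s(A)=\|\langle\xi\rangle^s\|_{L^2(Q_A)}$ in \eqref{eq:f}, carried out case by case in Claim \ref{claim:3}. In particular the endpoint $s=-\tfrac12=-\tfrac d2$ in $d=1$, which your statement must cover, is exactly the borderline case where the inflation is only by a power of $\log N$; your scheme, with "$\gtrsim tN^\gamma$ for a positive $\gamma$" and the unspecified exponent $\alpha$, gives no indication of how this endpoint is reached, nor why $-\tfrac12<s<0$ must be excluded when $d=1$.

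There are two further gaps of execution. (a) Your multilinear control of the tail and of the mixed trees invokes "dispersive decay of $e^{it\Delta^2}$" and $L^\infty$ bounds on $e^{it\Delta^2}v_0$, but you never state estimates strong enough to sum the quintic tree expansion; the paper needs no dispersion at all, working in $FL^1$ where $S(t)$ is an isometry and $FL^1$ is an algebra (Lemma \ref{Lem-Multiesti-Theta}), with the mixed trees handled by the difference estimate of Lemma \ref{lem:11270} in $FL^2=L^2$, which controls $H^s$ precisely because $s<0$ — this is where the hypothesis on $s$ enters, not through any smoothness $\sigma$ of $v_0$. (b) The time window in a resonance-based argument must be quantified against the spectral spread of your bumps (the phase varies by $\sim tN^3\times(\text{bump width})$ across the support), and this interacts with the convergence condition $t\|\phi_N\|_{FL^1}^4\ll1$ for the series and with $\|\phi_N\|_{H^s}\to0$; without an explicit choice of amplitude, width, $N$ and $t_N$ verifying all of these simultaneously (the analogue of Claim \ref{claim:3}, split into the regimes $s<-\tfrac d2$, $s=-\tfrac d2$, $-\tfrac d2<s<0$), the claim that the main term dominates and inflates in the stated range of $s$ is not established.
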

	
	Notice that the norm inflation occurs for \eqref{eq:main} in the space of negative regularity. In the space of positive regularity (under the assumption $s_{cr}>0$), this phenomenon also occurs for \eqref{eq:main} in the defocusing case.
	\begin{theorem}\label{thm:main22}
		Let $3\leq d\leq 5$ be an integer, and $s\in (0,s_{cr}(d))$.  If $\mu=1$, then the conclusion in Theorem \ref{thm:main} also holds.
	\end{theorem}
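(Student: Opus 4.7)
\emph{Proof plan.} The idea is a low-to-high frequency cascade, implemented via the scaling symmetry of \eqref{eq:main} and a pointwise ODE approximation, in the spirit of Burq--Tzvetkov \cite{burqtzvetkov2008} and Oh \cite{oh2017}. Given $u_0\in H^s$ and $\epsilon>0$, by density we may assume $u_0$ is a Schwartz function of compact support. Fix a nontrivial bump $\chi\in C_c^\infty(\mathbb R^d)$ and a point $x_0$ at positive distance from $\mathrm{supp}(u_0)$, and for parameters $1<\alpha<d/2-s$ and $\delta>0$ to be tuned set
\[
\phi_\delta(x):=\delta^{-\alpha}\chi\!\left(\frac{x-x_0}{\delta}\right),\qquad N:=\delta^{1-\alpha}\to\infty.
\]
A Fourier-side computation gives $\|\phi_\delta\|_{H^s}\lesssim\delta^{d/2-s-\alpha}\to 0$; the existence of an admissible $\alpha$ requires precisely $s<d/2-1=s_{cr}(d)$, which is our standing assumption.

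\emph{Splitting and rescaling.} Let $U_{\mathrm{sm}}$ be the global $H^2$ solution of \eqref{eq:main} with datum $u_0$, available because the defocusing energy \eqref{eq:energy} is coercive and subcritical in the regime $\mu=1$, $3\le d\le 5$. Writing $u=U_{\mathrm{sm}}+w$ for the solution with datum $u_0+\phi_\delta$, the perturbation satisfies $w(\cdot,0)=\phi_\delta$ and a perturbed version of \eqref{eq:main}. The scaling-adapted unknown
\[
W(y,\sigma):=N^{-1}\delta\,w\!\left(\delta y+x_0,\ \frac{\delta^4}{N^4}\sigma\right)
\]
satisfies $\partial_\sigma W=iN^{-4}\Delta^2W+i|W|^4W+\mathcal E_\delta$, with $W(\cdot,0)=\chi$; here $\mathcal E_\delta$ collects the interaction terms involving the rescaled $U_{\mathrm{sm}}$, and by choosing $\mathrm{dist}(x_0,\mathrm{supp}(u_0))$ large and using spatial decay of the fourth-order propagator, $\mathcal E_\delta$ is negligible in the norms used below. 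Since $N^{-4}\to 0$, $W$ is governed at leading order by the pointwise ODE $\partial_\sigma W_0=i|W_0|^4W_0$, solved exactly by $W_0(y,\sigma)=\chi(y)e^{i\sigma|\chi(y)|^4}$. Differentiating the oscillatory phase gives the inflation estimate $\|W_0(\cdot,\sigma)\|_{H^s}\gtrsim\sigma^s$ for $\sigma\ge 1$ and the forcing bound $\|\Delta^2W_0(\cdot,\sigma)\|_{L^2}\lesssim\sigma^4$.

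\emph{Tuning.} Pick $\sigma_\delta:=\delta^{-\theta}$ with $\theta>0$, and set $t_\delta:=\delta^{4\alpha}\sigma_\delta=\delta^{4\alpha-\theta}$. Inverting the rescaling and ignoring the remainder $R:=W-W_0$ for the moment,
\[
\|u(\cdot,t_\delta)\|_{H^s}\gtrsim \delta^{d/2-1-s}\,N\,\sigma_\delta^s=\delta^{d/2-s-\alpha-s\theta},
\]
which diverges iff $\theta>(d/2-s-\alpha)/s$, while $t_\delta\to 0$ requires $\theta<4\alpha$. A direct algebraic check shows these constraints, together with $1<\alpha<d/2-s$, are jointly solvable precisely when $s<s_{cr}(d)$.

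\emph{Main obstacle.} The technical crux is bounding $R$ on the \emph{long} rescaled window $[0,\sigma_\delta]$, whose length diverges as $\delta\to 0$. Since $R$ has zero initial data and is driven by $-iN^{-4}\Delta^2W_0$, a naive Duhamel-plus-Gronwall bound produces a ruinous exponential factor $e^{C\sigma_\delta}$. One sidesteps this by exploiting the pointwise structure of the linearization of $|W|^4W$ around $W_0$: the relevant potential has bounded amplitude $|\chi|^4=O(1)$ and its non-conservative part is controlled by the $L^2$ mass of $R$, giving only polynomial-in-$\sigma$ growth in the energy identity for $\|R\|_{L^2}^2$. Combined with the a priori $H^2$ bound coming from the conserved energy \eqref{eq:energy} and the Sobolev interpolation $\|R\|_{H^s}\lesssim\|R\|_{L^2}^{1-s/2}\|R\|_{H^2}^{s/2}$, one obtains $\|R(\sigma_\delta)\|_{H^s}=o(\sigma_\delta^s)$ in the chosen parameter regime. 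Substituting back yields $\|u(\cdot,t_\delta)\|_{H^s}>\epsilon^{-1}$ with $t_\delta<\epsilon$, completing the proof.
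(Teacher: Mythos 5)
There is a genuine gap at the step you yourself flag as the ``main obstacle'': the control of the remainder $R=W-W_0$ on the long rescaled window $[0,\sigma_\delta]$. Your claim that the $L^2$ energy identity gives ``only polynomial-in-$\sigma$ growth'' is not substantiated and, as stated, is false. Linearizing $|W|^4W$ around $W_0=\chi e^{i\sigma|\chi|^4}$ produces, besides the harmless term $i\,3|\chi|^4R$ (which drops out of $\frac{d}{d\sigma}\|R\|_{L^2}^2$), the term $i\,2|\chi|^2W_0^2\bar R$, whose contribution $2\Re\!\int i\,2|\chi|^2W_0^2\bar R^2\,dy$ does not vanish and is only bounded by $C\|\chi\|_{L^\infty}^4\|R\|_{L^2}^2$ with a time-independent $O(1)$ constant (the amplitude $|\chi|$ does not decay in $\sigma$). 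A differential inequality $\frac{d}{d\sigma}\|R\|_{L^2}^2\lesssim \|R\|_{L^2}^2+(\text{forcing})$ still gives $e^{C\sigma_\delta}$ by Gronwall, and with your choice $\sigma_\delta=\delta^{-\theta}$, $\theta>0$ fixed, this exponential obliterates every polynomial gain in the scheme; the subsequent interpolation with the (itself enormous, since the energy of the perturbed datum diverges) $H^2$ bound cannot repair this. In addition, the higher-order terms in $R$ (up to $|R|^4R$) are not ``controlled by the $L^2$ mass of $R$'' without a smallness/bootstrap mechanism, and the dismissal of the interaction $\mathcal E_\delta$ by placing $x_0$ far from $\mathrm{supp}(u_0)$ and invoking decay of the fourth-order propagator is not quantified.

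The paper's proof is built precisely to avoid this exponential loss, and the difference is not cosmetic. There the perturbation has amplitude essentially at the threshold, $\psi_n=\kappa_n n^{d/2-s}\phi(nx)$ with $\kappa_n=(\log\log n)^{-\delta_1}$ only slowly vanishing, and the time is chosen so that the ODE phase advances only like $t_n|\psi_n|^4\sim(\log n)^{\delta_2}$, i.e.\ logarithmically rather than polynomially. The inflation gained, $\|v_n(t_n)\|_{H^s}\gtrsim\kappa_n(\log n)^{s\delta_2}$, is small but suffices, while the Gronwall factor for the remainder is only $e^{C(\log n)^{5\delta_2}}$, which is subpolynomial and absorbed by the power $(\kappa_nn^{q_1})^{-\delta}$ saved in the forcing; the nonlinear-in-remainder terms are closed by a continuity/bootstrap argument for the semiclassical energy $E_n[w_n]=n^s\|w_n\|_{L^2}+n^{s-4}\|\Delta^2w_n\|_{L^2}$ together with Gagliardo--Nirenberg (this is where $d\le 5$ enters), and the interaction with $u_0$ is handled by direct estimates on $u^L=e^{it\Delta^2}u_0$ rather than spatial separation. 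Your polynomial parameter choices ($\alpha<d/2-s$ strictly, $\sigma_\delta$ a power of $\delta^{-1}$) force an exponentially long ODE time and hence cannot work with a Gronwall-type remainder estimate; to salvage the argument you would either have to exhibit a genuine mechanism suppressing the exponential (none is given) or move to the paper's near-threshold amplitude and logarithmic phase-time regime.
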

	\begin{remark}
		We see this result has the dimension constraint $3\leq d\leq 5$ and the defocusing requirement. The reason relies on that, to construct the desired solutions of \eqref{eq:main}, we not only used the local existence of solutions in energy space for \eqref{eq:main}, but also utilized that the local solutions should live for a sufficiently long time. Both of these are now known to be true only for the above dimensions in the defocusing case, see \cite{dinh2018b} and \cite{ghanmisaanouni2016}. At this point, we further remark that for $3\leq d\leq 8$, as long as \eqref{eq:main}, starting from any smooth data, has regular solutions that live for a sufficiently long time, the solution map of \eqref{eq:main} will exhibit norm inflation everywhere (as in Theorem \ref{thm:main22}).
	\end{remark}
	
	An immediate consequence of Theorems \ref{thm:main} and \ref{thm:main22} is the {everywhere} discontinuity of the solution map associated to \eqref{eq:main}.
	\begin{corollary}\label{cor}
		Let $(d,s)$ be as in Theorem \ref{thm:main} or Theorem \ref{thm:main22}. 
		Then, for any $T>0$, the solution map $\Phi: H^s\rightarrow C([-T,T],H^s)$
		associated to the equation \eqref{eq:main} 
		{is discontinuous everywhere in $H^s$}.
	\end{corollary}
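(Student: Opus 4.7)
The plan is to deduce the corollary directly from the norm inflation theorems by unwinding the relevant definition. Since norm inflation already provides, near any prescribed datum, solutions whose $H^s$-norms blow up on an arbitrarily small time interval, the discontinuity of the solution map is essentially built in; I will simply have to repackage the conclusions of Theorem~\ref{thm:main} (resp.~Theorem~\ref{thm:main22}) into the form demanded by Definition~\ref{def}.

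More concretely, I would fix an arbitrary $u_0 \in H^s$ and $T>0$, and produce a sequence of initial data $v_n \to u_0$ in $H^s$ whose associated solutions $\Phi(v_n)$ fail to converge to any candidate for $\Phi(u_0)$ in $C([-T,T], H^s)$. To do this, choose $\epsilon_n \searrow 0$ with $\epsilon_n < T$, and apply the norm inflation at $u_0$ with parameter $\epsilon_n$. This yields a solution $u_n$ of \eqref{eq:main} and a time $t_n \in (0,\epsilon_n)\subset [-T,T]$ such that
\[
\|u_n(\cdot,0)-u_0\|_{H^s}<\epsilon_n,\qquad \|u_n(\cdot,t_n)\|_{H^s}>\epsilon_n^{-1}.
\]
Setting $v_n := u_n(\cdot,0)$, the first estimate gives $v_n\to u_0$ in $H^s$, while the second, together with the triangle inequality $\|u_n(\cdot,t_n)-u_0\|_{H^s}\geq \epsilon_n^{-1}-\|u_0\|_{H^s}$, shows that $u_n(\cdot,t_n)$ drifts arbitrarily far from $u_0$ by the time $t_n$, even though $t_n$ can be taken arbitrarily small. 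This directly contradicts any continuity of $\Phi$ at $u_0$: if $\Phi(u_0)$ were well-defined as an element of $C([-T,T],H^s)$ with bound $M$ on that interval, then
\[
\|\Phi(v_n)-\Phi(u_0)\|_{C([-T,T],H^s)}\geq \|u_n(\cdot,t_n)\|_{H^s}-M \geq \epsilon_n^{-1}-M \longrightarrow \infty.
\]

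There is no real obstacle here; the only point that deserves a sentence is the interpretation of $\Phi(u_0)$ when $u_0$ lies in the ill-posed regime. Either one reads discontinuity in the sense of Definition~\ref{def}, which refers only to the existence of a nearby pathological solution (and so our $u_n$ already witnesses it), or one interprets the corollary as saying that $\Phi$ admits no continuous extension to any point of $H^s$. In both readings the inequality above, for any fixed $u_0$, closes the argument; since $u_0 \in H^s$ was arbitrary, the everywhere discontinuity follows.
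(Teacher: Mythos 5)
Your argument is correct and is essentially the proof the paper has in mind: the paper omits the details, saying the corollary follows from Theorem \ref{thm:main} (and Theorem \ref{thm:main22}) by a diagonalization argument as in \cite{xia2021}, and your extraction of data $v_n=u_n(\cdot,0)\to u_0$ with times $t_n\in(0,\epsilon_n)$ and $\|u_n(\cdot,t_n)\|_{H^s}>\epsilon_n^{-1}\to\infty$ is exactly that diagonal/sequence argument, including the correct reading of ``discontinuity'' when $\Phi(u_0)$ itself need not be defined.
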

	The proof of Corollary \ref{cor} follows from Theorem \ref{thm:main} via a diagonalization argument as that in \cite{xia2021}.
	Hence, the details are omitted here.

	\begin{remark}
		The results in {Theorem \ref{thm:main} and Corollary \ref{cor} are also valid on the torus $\mathbb{T}^d$ for $s\leq -\frac{1}{2}$ if $d=1$, or $s<0$ if $d\geq2$. This can be proved by using similar arguments as in \cite{oh2017} in the torus settings.} {At this point, we further remark that, as long as \eqref{eq:main} (on $\mathbb{T}^d$ for $3\leq d\leq 8$) starting from smooth datum, has regular solutions that live for a sufficiently long time, its associated solution map will exhibit norm inflation as in Theorem \ref{thm:main22} and hence the corresponding solution map fails to be everywhere discontinuous as in Corollary \ref{cor}.} 
	\end{remark}
	
	\begin{remark}\label{rmk:1} 
		It was proved by Dihn \cite{dinh2018b} that 
		the solution map of \eqref{eq:main} from $H^s(\mathbb{R}^d)$ to $C([-T,T],H^s(\mathbb{R}^d)),T>0$
		fails to be continuous at $0$ in $H^s$,
		in the case where $s\in\left(-\infty,-\frac{d}{2}\right]\cup [0,s_{cr}(d))$ if $d\geq 3$,
		and $s\in \left(-\infty,-\frac{d}{2}\right)$ if $d=1, 2$.
		
		The proof in \cite{dinh2018b} indeed implies a stronger result:
		equation \eqref{eq:main} exhibits the norm inflation phenomenon at $0$ in $H^s(\mathbb{R}^d)$
		for $s$ in the above range except the case $s=0$.
		
		Our results in Theorems \ref{thm:main} and \ref{thm:main22} indicate that,
		the norm inflation at $0$ in $H^s$ also occurs 
		for the critical value $s=-\frac d2$
		if $d=1$, $s\in [-\frac d2, 0)$ if $d=2$,
		and $s\in (-\frac d2, 0)$ if $3\leq d\leq 5$.
		Moreover, Theorem \ref{thm:main} also shows the norm inflation phenomenon
		even at any data in $H^s(\mathbb{R}^d)$,
		with $s$ specified in Theorems \ref{thm:main} and \ref{thm:main22},
		not just at the zero data.
	\end{remark}  
	
	\begin{remark}\label{rmk:2} 
		As we have seen in Remark \ref{rmk:1}, in the one dimensional case, the range of $s$ in our results, indeed contains the point $s_{cr}(1)=-\frac{1}{2}$. This also happens in Oh's work \cite{oh2017}, dealing with the one dimensional cubic Schr\"{o}dinger equation, and in Iwabuchi-Ogawa's work \cite{iwabuchiogawa2014}, in dealing two dimensional quadratic Schr\"odinger equation. Even surprising, the norm inflation indeed occurs in some spaces of regularity above the critical index, 
		{see, for instance,  Choffrut-Pocovnicu's work \cite{choffrutpocovnicu2018} on the fractional cubic NLS on the circle.}

	\end{remark}
	
	Our proofs of Theorems \ref{thm:main} and \ref{thm:main22} are based on two different mechanisms,
	namely,
	the high-to-low frequency 
	{cascade} when $s$ is negative,
	and the low-to-high frequency {cascade} when $s$ is positive.
	
	To be precise,
	in the case $s\leq -\frac{1}{2}$ if $d=1$, 
	and $s<0$ if $d\geq 2$,
	we mainly adapt the method in \cite{forlanookamoto2020,iwabuchiogawa2014,kishimoto2019,oh2017}.
	By a density argument,
	we may assume that the initial data $u_0$ is a Schwartz function.
	The idea is to reformulate the solution $u$ to \eqref{eq:main} as a series expansion indexed by $5$-ary trees
	(see \eqref{eq:power:series} below),
	which is absolutely convergent in $C([0,T],FL^1)$ for $T\sim \|u_0\|^{-4}_{FL^1}$.
	Here $FL^1$ is the Fourier-Lebesgue space.
	
	The key fact is that,
	there exists a sequence of smooth functions $\{\phi_n\}$ (see \eqref{eq:112707} below)
	such that $u_0+\phi_n$ converges to $u_0$ in $H^s$.
	However, the first Picard iteration tends to infinity along an appropriate time sequence $\{t_n\}$,
	while the higher order Picard iterations are uniformly bounded.
	Therefore,
	this forces $\|u_n(t_n)\|_{H^s}$
	to grow to infinity as $n$ tends to $\infty$.
	The underlying mechanism for the growth of first Picard iteration
	is the transfer of energy from high to low frequencies,
	which is the content of Lemma \ref{prop:lower:bound}.
	
	In the regime where $0<s<s_{cr}(d)$ for $3\leq d\leq 5$, we adapt the method in \cite{xia2021}. As in the case of negative regularity, we assume that $u_0$ is a Schwartz function. The main oscillation here is captured by the ODE
	\begin{equation}\label{ode}
		i\partial_t v_n+ |v_n|^4 v_n=0,\ \ V(0)= \psi_n.
	\end{equation}
	Note that, equation \eqref{ode} has no dispersion term. 
	Let $u_n$ be the smooth solution to \eqref{eq:main} starting from $u_0+\psi_n$. We will choose smooth functions $\psi_n$ with compact support 
	in an appropriate way such that the following holds:
	\begin{enumerate}[(i)]
		\item $\psi_n$ converges to $0$ in $H^s(\mathbb{R}^d)$;
		\item there exists a sequence $\{t_n\}$ of positive numbers tending to zero such that, as $n$ tends to infinity,
		\begin{enumerate}[(a)]
			\item[(ii.a)] $\|v_n(t_n)\|_{H^s}$ tends to infinity,
			\item[(ii.b)] $\|v_n-u_n\|_{L^\infty([0,t_n],H^s)}$ is bonded uniformly.
		\end{enumerate}
	\end{enumerate}
	In  particular, the above facts yield the norm inflation at the given data $u_0$ in $H^s$. 
	
	We first mention that, 
	in contrast to the negative regularity case where $s<0$, 
	the underlying mechanism  in $\mathrm{(ii.a)}$ relies crucially on
	{\cite[Lemma A.3]{burqtzvetkov2008}},
	which captures quantitatively the transfer of energy from low to high frequencies. 
	In order to realize $\mathrm{(ii.b)}$, we next introduce the semi-classical energy $E_n$ 
	defined by 
	\begin{equation*}
		E_n[w_n]:=n^{s}\|w_n\|_{L^2}+n^{s-4}\|\Delta^2w_n\|_{L^2}, 
	\end{equation*}
	to measure the size of the remainder 
	$w_n:=u_n-v_n-u^L$ with $u^L$ being the free propagation of $u_0$.
	By analyzing the interactions between the free propagation,
	the ODE oscillation
	and the remainder, and using the bootstrap arguments
	we are able to show $E_n[w_n]\rightarrow0$ as $n$ tends to infinity. From this, together with the boundedness of $u_0$ in $H^s$, 
	we obtain the uniform boundedness of $\|v_n-u_n\|_{L^\infty([0,t_n],H^s({\mathbb{R}^d}))}$. 
	\begin{remark}
		Although the ``low-to-high'' frequency cascade (especially the ODE approach) has been used to study 
		the norm inflation phenomena concerning wave equations (see for instance \cite{burqtzvetkov2008},\cite{xia2021} and \cite{suntzvetkov2020}), it is the first time for us to use this idea to treat Schr\"odinger-type equations. 
	\end{remark}
	
	\vspace*{3pt}
	
	{		{We digress a bit to list some notations we are going to use throughout this article.} 
		For $1\leq p\leq \infty$, we denote by $L^p := L^p(\mathbb{R}^d)$ the space of $p$-th power integrable complex-valued functions defined on $\mathbb{R}^d$. We endow it with the norm $\|\cdot\|_{L^p}$.  For $s\in\mathbb{R}$ and $p\geq 1$, we denote by $FL^{s,p}$ the Fourier-Lebesgue space, which consists of functions $f$ satisfying
		\begin{equation*}
			\left\|f\right\|_{FL^{s,p}}
			:=\left\|\langle\xi\rangle^s\hat{f}\right\|_{L^p} 
			<\infty.
		\end{equation*}
		Here $\hat{f}$ is the Fourier transform of $f$. In the case $s=0$,
		we abbreviate $FL^p:= FL^{0,p}$.
		One result we will use frequently is that $FL^1$ is an algebra. This is indeed a direct consequence of Young's inequality. In the case $p=2$, we note that 
		$FL^{s,2}$ coincides with $H^s$, the inhomogeneous $L^2$-based Sobolev space. 
		We also denote by $\dot{H}^s$ the homogeneous Sobolev space which 
		consists of functions $f$ such that 
		\begin{align*} 
			\|f\|_{\dot{H}^s} := \| |\xi|^s \hat{f} \|_{L^2} <\infty. 
		\end{align*}
		We will conventionally use $\mathbb{N}$ to denote the set of all non-negative 
		integers and use $\mathbb{N}_+:= \mathbb{N}\setminus\{0\}$ to denote the set of all positive integers. 
		
		\vspace*{2pt}		
		We end this introduction part by briefly describing the organization of remaining part of this article: in Section \ref{Sec-s-negat}, 
		we are going to prove Theorem \ref{thm:main}, while in Section \ref{Sec-s-posit}  
		we will prove Theorem \ref{thm:main22}. 
	} \vspace*{1pt}

	\section{The cases $s\leq -\frac{1}{2}$ for $d=1$ and $s<0$ for $d\geq2$}   \label{Sec-s-negat} 
	
	In this section, we prove Theorem \ref{thm:main}. where 
	\begin{align}  \label{s<0} 
		s \leq -\frac{1}{2}\  for\ d=1,\  
		\mathrm{and}\ \ s<0\ \mathrm{for}\ d\geq2. 
	\end{align} 
	The key mechanism responsible for the norm inflation phenomenon in this case
	is the `high-to-low' frequency transfer of energy.

	\begin{definition}
		A $5$-ary tree $\mathcal{T}$ is a set  of nodes together with a partial order $\leq$, satisfying
		\begin{itemize}
			\item $\mathcal{T}$ has the unique maximal element, called the root node.
			\item Let $a,b\in\mathcal{T}$ satisfy $b\leq a$. If there another element $c\in\mathcal{T}$ such that $b\leq c$ and $c\leq a$, then $b=c$ or $c=a$. In this case, we say $a$ is the parent of $b$, and $b$ is a child of $a$; 	
			\item Every node either has no child or has exactly $5$ children.
		\end{itemize}
		The node having no child is called a terminal node. The node having children is called a non-terminal node.
	\end{definition}
	
	For  any $5$-ary tree $\mathcal{T}$,
	we denote by $\mathcal{T}^0$ the collection of its non-terminal nodes,
	and by $\mathcal{T}^\infty$ the collection of its terminal nodes.
	Given any nonnegative integer $j\in \mathbb{N}$,
	we denote by $\mathbf{T}(j)$ the collection of all $5$-ary trees with $j$ non-terminal nodes.
	
	We have the following basic properties of the $5$-ary trees.
	
	\begin{lemma}\label{lem:1127} For each $j\in\mathbb{N}$, we have
		\begin{enumerate}[(i)]
			\item the cardinality of the set $\mathbf{T}(j)$ is bounded by $C^j_0$ for some $C_0>0$;
			\item If $\mathcal{T}\in\mathbf{T}(j)$, then it has exactly $5j+1$ nodes and $4j+1$ terminal nodes.
		\end{enumerate}
	\end{lemma}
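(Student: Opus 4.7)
The plan is to handle (ii) by a direct double-count of edges, and (i) by exploiting the natural recursive decomposition of 5-ary trees. Both are elementary combinatorial facts, and neither requires machinery beyond what a reader of the paper will have.

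For part (ii), I would avoid induction and simply count parent-child edges in two ways. Let $\mathcal{T}\in\mathbf{T}(j)$ and let $N$ denote its total number of nodes. Since $\mathcal{T}$ is a tree, the number of parent-child pairs equals the number of non-root nodes, namely $N-1$. On the other hand, every non-terminal node contributes exactly $5$ children, so the same count equals $5j$. Equating gives $N = 5j+1$, and therefore the number of terminal nodes is $N - j = 4j+1$, as claimed.

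For part (i), I would use the recursive structure. For $j\geq 1$, every $\mathcal{T}\in\mathbf{T}(j)$ decomposes uniquely into a root together with an ordered $5$-tuple of subtrees $(\mathcal{T}_1,\ldots,\mathcal{T}_5)$ with $\mathcal{T}_i\in\mathbf{T}(j_i)$ and $j_1+\cdots+j_5 = j-1$. Writing $T_j := |\mathbf{T}(j)|$, this gives the recursion
\begin{equation*}
T_j \;=\; \sum_{j_1+\cdots+j_5 = j-1} T_{j_1}T_{j_2}T_{j_3}T_{j_4}T_{j_5},\qquad T_0 = 1,
\end{equation*}
and the generating function $f(x) := \sum_{j\geq 0} T_j x^j$ satisfies $f(x) = 1 + x f(x)^5$. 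Lagrange inversion then yields the Fuss--Catalan formula $T_j = \frac{1}{4j+1}\binom{5j}{j}$, which by a Stirling estimate satisfies $T_j \leq C_0^j$ for any $C_0 > 5^5/4^4$. A more elementary route, avoiding generating functions, is to encode each $\mathcal{T}\in\mathbf{T}(j)$ by its depth-first preorder traversal as a binary string of length $5j+1$, recording $1$ at every non-terminal node and $0$ at every terminal node; this encoding is injective because the tree can be reconstructed by reading the string left-to-right, so $T_j \leq \binom{5j+1}{j} \leq (5e+1)^j$, which already suffices.

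I do not anticipate any real obstacle. The only bit of care needed is to justify the uniqueness in the recursive decomposition (equivalently, the injectivity of the traversal encoding), after which both items reduce to one-line computations, and the constant $C_0$ can be chosen with considerable slack.
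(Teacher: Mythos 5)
Your proof is correct, and it is a genuinely more self-contained route than the paper's, which disposes of the lemma by citation: it refers to \cite{forlanookamoto2020} for $k$-ary trees, says that $(i)$ follows by arguments similar to Oh's Lemma 2.3 in \cite{oh2017}, and that $(ii)$ follows from an (unspecified) induction. For $(ii)$, your double count of parent--child edges (the $N-1$ non-root nodes versus the $5j$ children of the $j$ non-terminal nodes) replaces the induction and gives $N=5j+1$, hence $4j+1$ terminal nodes, in one line. For $(i)$, the cited argument is in the same spirit as yours --- a recursive root-decomposition leading to a generalized Catalan bound --- but you actually supply the details: the recursion $T_j=\sum_{j_1+\cdots+j_5=j-1}T_{j_1}\cdots T_{j_5}$ with $T_0=1$, the Fuss--Catalan formula $T_j=\frac{1}{4j+1}\binom{5j}{j}\leq (5^5/4^4)^j$ (so one may even take $C_0=5^5/4^4$, since $\binom{5j}{j}\leq 5^{5j}/4^{4j}$), and, as a shortcut avoiding Lagrange inversion altogether, the injective preorder encoding giving $T_j\leq\binom{5j+1}{j}$. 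The only caveat worth recording is that both the exact Fuss--Catalan count and the traversal encoding presuppose that the children of each node are ordered, i.e.\ that $\mathbf{T}(j)$ consists of plane trees; this is how the paper implicitly treats its trees when it labels terminal nodes ``from left to right'' in the planar representation, and in any case, if one insists on the bare poset definition, the plane-tree count is an upper bound for the number of isomorphism classes, so the exponential bound $C_0^j$ --- which is all that is used later --- is unaffected.
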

	\begin{proof}See \cite{forlanookamoto2020}  
		for $k$-ary trees, 
		where $k\geq 2$ is an integer. Indeed, 
		the statement $(i)$ can be proved by using similar arguments as in the proof of Lemma 2.3 in \cite{oh2017},
		while the proof of $(ii)$ follows from an induction argument. 
	\end{proof}

	Given any $\phi\in FL^1(\mathbb{\mathbb{R}}^d)$ and any $j\in\mathbb{N}$,
	for each $\mathcal{T}\in \mathbf{T}(j)$,
	we define the space-time distribution
	\begin{equation*}
		\Psi_{\phi}(\mathcal{T})
	\end{equation*}
	by identifying any terminal node with the linear wave $S(t)\phi$,
	where $S(t)\phi:=e^{it\Delta^2}\phi$ is the linear propagation of $\phi$,
	and then by identifying the non-terminal nodes with the Duhamel integral operator $\mathcal{I}$
	\begin{equation}\label{eq:duhamel}
		\mathcal{I}[u_1,u_2,u_3,u_4,u_5]
		:=i\mu \int_0^tS(t-\tau)\left(u_1(
		\tau)\bar{u}_2(\tau)u_3(\tau)\bar{u}_4(\tau)u_5(\tau)\right)d\tau, 
	\end{equation}
	where $u_j$, $1\leq j\leq 5$, correspond to the five children of this node. 
	{For simplicity, 
		we write $\mathcal{I}[u]:= \mathcal{I}[u,u,u,u,u]$.}  
	
	More generally, given $\mathcal{T}\in\mathbf{T}(j)$, $j\in \mathbb{N}$,
	we label its terminal nodes by $a_1,\dots,a_{4j+1}$ (in its planar graphical representation, from left to right).
	For any functions $\phi_1,\dots, \phi_{4j+1}$,
	we define the associated space-time distribution $\Psi(\mathcal{T};\phi_1,\dots,\psi_{4j+1})$
	by identifying the terminal node $a_k$ with the linear wave $S(t)\phi_k$ for $k=1,\dots, 4j+1$,
	and the non-terminal nodes with the Duhamel integral operator $\mathcal{I}$ as in \eqref{eq:duhamel}.
	In particular, one has
	\begin{align}
		\Psi_{\phi}(\mathcal{T})=\Psi(\mathcal{T};\phi,\dots,\phi).
	\end{align}
	
	Thus, we can rewrite formally the Duhamel formula of the solution to \eqref{eq:main}
	\begin{equation*}
		u(t)=S(t)u_0+\mathcal{I}[u](t), 
	\end{equation*}
	in the series expansion labeled by $5$-ary trees:
	\begin{equation}\label{eq:power:series}
		u(t)=\sum_{j=0}^\infty \sum_{\mathcal{T}\in\mathbf{T}(j)}\Psi_{\phi}(\mathcal{T})(t)
		=\sum_{j=0}^\infty \Theta_j(\phi)(t),
	\end{equation}
	{where $\phi=u_0$, and}  
	\begin{equation}\label{eq:112601}
		\Theta_j(\phi)(t):=\sum_{\mathcal{T}\in\mathbf{T}(j)}\Psi_{\phi}(\mathcal{T})(t).
	\end{equation}

	The formal series expansion \eqref{eq:power:series} indeed defines a solution to \eqref{eq:main} in $L^\infty([0,T],FL^1)$ for $T$ small enough, as is shown in the following result.
	
	\begin{proposition}\label{prop:lwp}
		For any $u_0\in FL^1$, there exists a unique solution $u \in C([0,T],FL^1)$ to \eqref{eq:main},
		where $T\sim \|u_0\|^{-4}_{FL^1}$.
		Moreover,
		$u$ has the expansion \eqref{eq:power:series}
		with $u_0$ replacing $\phi$
		and the series  is absolutely convergent in $C([0,T],FL^1)$.
	\end{proposition}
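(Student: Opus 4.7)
The plan is to establish existence, uniqueness, and the tree series representation simultaneously via a standard contraction mapping argument in $C([0,T],FL^1)$ combined with an inductive estimate on the individual tree contributions. The two key structural inputs are: first, $FL^1$ is a Banach algebra (a direct consequence of Young's convolution inequality, as noted in the introduction), so $\||u|^4u\|_{FL^1}\leq \|u\|_{FL^1}^5$; and second, the linear propagator $S(t)=e^{it\Delta^2}$ is an isometry on $FL^1$, since on the Fourier side it is multiplication by the unimodular symbol $e^{it|\xi|^4}$. Together these yield, for any $u,v\in C([0,T],FL^1)$,
\begin{equation*}
\|\mathcal{I}[u](t)\|_{FL^1} \leq T\|u\|_{L^\infty_T FL^1}^5,\qquad \|\mathcal{I}[u]-\mathcal{I}[v]\|_{L^\infty_T FL^1}\leq 5T \max(\|u\|,\|v\|)^4\,\|u-v\|_{L^\infty_T FL^1}.
\end{equation*}
Setting $M:=2\|u_0\|_{FL^1}$, one checks that the Duhamel map $\Phi[u]:=S(\cdot)u_0+\mathcal{I}[u]$ sends the closed ball of radius $M$ in $C([0,T],FL^1)$ into itself and contracts, provided $T\sim \|u_0\|_{FL^1}^{-4}$ with a sufficiently small implicit constant. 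This produces the unique solution $u\in C([0,T],FL^1)$.

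To obtain the tree series expansion, I will prove by induction on $j$ that for every $\mathcal{T}\in\mathbf{T}(j)$,
\begin{equation*}
\|\Psi_\phi(\mathcal{T})(t)\|_{FL^1}\leq t^j\,\|\phi\|_{FL^1}^{4j+1}.
\end{equation*}
The base case $j=0$ is the isometry $\|S(t)\phi\|_{FL^1}=\|\phi\|_{FL^1}$. For the inductive step, write the root of $\mathcal{T}$ as joining five subtrees $\mathcal{T}_1,\ldots,\mathcal{T}_5$ with $j_1+\cdots+j_5=j-1$ non-terminal nodes and $\sum_i(4j_i+1)=4j+1$ terminal nodes. Applying the isometry of $S(t-\tau)$, the algebra property to the product of five children, and the induction hypothesis under the Duhamel integral, yields the desired bound. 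Combining with the cardinality estimate $|\mathbf{T}(j)|\leq C_0^j$ from Lemma~\ref{lem:1127}(i) and the terminal-node count from Lemma~\ref{lem:1127}(ii) gives
\begin{equation*}
\|\Theta_j(u_0)(t)\|_{FL^1}\leq (C_0 t)^j\,\|u_0\|_{FL^1}^{4j+1},
\end{equation*}
so the series \eqref{eq:power:series} converges absolutely in $C([0,T],FL^1)$ whenever $T<(C_0\|u_0\|_{FL^1}^4)^{-1}$, which is consistent (after shrinking $c$ if necessary) with the contraction interval above.

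Finally, I will identify the series sum with the fixed-point solution. The partial sums $U_N:=\sum_{j=0}^{N}\Theta_j(u_0)$ are precisely the Picard iterates $U_{N}=S(t)u_0+\mathcal{I}[U_{N-1}]+\mathcal{E}_N$, where the error $\mathcal{E}_N$ involves only trees whose root has at least one subtree of depth exceeding $N-1$, and hence can be bounded by a tail of the geometric series above. Passing to the limit $N\to\infty$ in $C([0,T],FL^1)$ shows that $u_\infty:=\lim U_N$ solves the Duhamel equation, and by the uniqueness already established $u_\infty=u$. The only step that is not entirely routine is the combinatorial bookkeeping in the inductive tree estimate, specifically confirming that the power of $t$ contributed by nested time integrations is exactly $j$ and that the exponents on $\|\phi\|_{FL^1}$ add up correctly across the five subtrees; once this is organized, the remaining estimates reduce to repeated use of the algebra property and the isometry of $S(t)$.
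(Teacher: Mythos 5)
Your proposal is correct and follows essentially the same route the paper indicates: a contraction mapping argument in $C([0,T],FL^1)$ based on the unitarity of $S(t)$ and the algebra property of $FL^1$, combined with the inductive tree estimate $\|\Psi_\phi(\mathcal{T})(t)\|_{FL^1}\lesssim t^j\|\phi\|_{FL^1}^{4j+1}$ and the cardinality bound $|\mathbf{T}(j)|\leq C_0^j$ to get absolute convergence of the series. In fact you supply more detail than the paper (which omits the proof), in particular the identification of the series sum with the fixed point via the Picard-iterate bookkeeping, and that step is carried out correctly.
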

	
	The proof of Proposition \ref{prop:lwp} is based on the multilinear estimates 
	in Lemma \ref{Lem-Multiesti-Theta} 
	and {the difference estimate} in Lemma \ref{lem:11270} below.
	
	\begin{lemma} (Multilinear estimates)  \label{Lem-Multiesti-Theta}
		There exists  $C>0$, such that
		for any $\phi\in FL^1$ and any $j\in\mathbb{N}$,
		\begin{align}\label{eq:11260}
			\left\|\Theta_j(\phi)(t)\right\|_{FL^1}&\leq C^j t^j\|\phi\|^{4j+1}_{FL^1},
		\end{align}
		and for any $j\in \mathbb{N}_+$,
		\begin{align}
			\left\|\Theta_j(\phi)(t)\right\|_{FL^\infty}&\leq C^j t^j\|\phi\|^2_{L^2}\|\phi\|^{4j-1}_{FL^1}.   \label{eq:112602}
		\end{align}
	\end{lemma}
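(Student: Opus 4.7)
The plan is to derive both estimates from two elementary product inequalities together with a joint induction on the number $j$ of non-terminal nodes. First, since $\widehat{S(t)\phi}(\xi) = e^{it|\xi|^4}\hat\phi(\xi)$, the linear propagator $S(t)$ is unitary on $L^2$ and an isometry on $FL^p$ for every $p \in [1,\infty]$. The two multilinear inequalities I will use are the algebra property $\|u_1 \bar u_2 u_3 \bar u_4 u_5\|_{FL^1} \le \prod_{k=1}^{5} \|u_k\|_{FL^1}$ and, via Young's convolution on $\mathbb{R}^d$ with exponents $(2,2,1,1,1)\mapsto \infty$, the bound $\|u_1 \bar u_2 u_3 \bar u_4 u_5\|_{FL^\infty} \le \|u_1\|_{L^2}\|u_2\|_{L^2}\|u_3\|_{FL^1}\|u_4\|_{FL^1}\|u_5\|_{FL^1}$, where Plancherel identifies $\|\hat u\|_{L^2}$ with $\|u\|_{L^2}$ and complex conjugation is immaterial to every norm appearing.

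Both \eqref{eq:11260} and \eqref{eq:112602} will be reduced to per-tree estimates: I will prove by joint strong induction on $j$ that there is an absolute constant $C > 0$ such that, for every $\mathcal{T} \in \mathbf{T}(j)$,
\begin{gather*}
	\|\Psi_\phi(\mathcal{T})(t)\|_{FL^1} \le C^j t^j \|\phi\|_{FL^1}^{4j+1} \qquad (j \ge 0), \\
	\|\Psi_\phi(\mathcal{T})(t)\|_{FL^\infty} \le C^j t^j \|\phi\|_{L^2}^2 \|\phi\|_{FL^1}^{4j-1} \qquad (j \ge 1).
\end{gather*}
The base case $j = 0$ is immediate from the isometry property of $S(t)$. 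For the inductive step, at a tree whose root has five subtrees $\mathcal{T}_1, \ldots, \mathcal{T}_5$ of sizes $j_k \ge 0$ with $\sum_k j_k = j - 1$, I would push the isometry of $S(t-\tau)$ through the Duhamel integral in \eqref{eq:duhamel}, apply the algebra property pointwise in $\tau$ for the $FL^1$ bound, and apply the $(2,2,1,1,1)\mapsto\infty$ Young inequality for the $FL^\infty$ bound (placing, say, the first two children in the $L^2$ slots and the remaining three in $FL^1$). The time integral $\int_0^t \tau^{j-1}\,d\tau = t^j/j$ supplies the extra power of $t$.

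The essential ingredient is to bound each non-terminal subtree in $L^2$ within the induction. For this I would use the elementary interpolation
\begin{equation*}
	\|f\|_{L^2}^2 = \int |\hat f(\xi)|^2\,d\xi \le \|\hat f\|_{L^\infty}\|\hat f\|_{L^1} = \|f\|_{FL^\infty}\|f\|_{FL^1},
\end{equation*}
applied to $f = \Psi_\phi(\mathcal{T}_k)(\tau)$; together with the two inductive hypotheses this yields $\|\Psi_\phi(\mathcal{T}_k)(\tau)\|_{L^2} \le C^{j_k}\tau^{j_k}\|\phi\|_{L^2}\|\phi\|_{FL^1}^{4j_k}$ for $j_k \ge 1$, while the same bound holds trivially for $j_k = 0$ since $\|S(\tau)\phi\|_{L^2} = \|\phi\|_{L^2}$. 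Tallying exponents, the $\|\phi\|_{L^2}$ and $\|\phi\|_{FL^1}$ powers add to $2$ and $4(j-1)+3 = 4j-1$ respectively, and the $\tau$-powers add to $j-1$; after time integration, the inductive step closes once $C$ is chosen large enough to absorb the universal Young constants. Finally, summing over $\mathcal{T} \in \mathbf{T}(j)$ and using $|\mathbf{T}(j)| \le C_0^j$ from Lemma \ref{lem:1127}(i) delivers \eqref{eq:11260} and \eqref{eq:112602}. The main obstacle is precisely this joint induction: bounding the $L^2$ norm of a general non-terminal subtree requires both inductive hypotheses at once, and it is the interpolation identity above that allows them to feed into each other.
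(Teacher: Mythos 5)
Your argument is correct and follows the same overall strategy as the paper: reduce \eqref{eq:11260} and \eqref{eq:112602} to per-tree bounds via Lemma \ref{lem:1127}(i), prove the $FL^1$ per-tree bound by induction using the isometry of $S(t)$ and the algebra property of $FL^1$ (this part is identical to the paper), and then induct again for the $FL^\infty$ bound. Where you genuinely diverge is in the inductive step for the $FL^\infty$ estimate. The paper observes that for $j\geq 1$ at least one child subtree is non-terminal, selects such a child (WLOG $\mathcal{T}_1$), applies the $(\infty,1,1,1,1)$ Young inequality, and lets the $FL^\infty$ inductive hypothesis on that single child carry the two $L^2$ factors while all other children are estimated in $FL^1$; no $L^2$ control of composite subtrees is ever needed, and the $\|\phi\|_{L^2}^2$ originates only in the explicit $j=1$ base case. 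You instead fix the slot assignment $(2,2,1,1,1)$, which forces you to bound arbitrary (possibly non-terminal) subtrees in $L^2$; you do this with the interpolation $\|f\|_{L^2}^2\leq\|f\|_{FL^\infty}\|f\|_{FL^1}$ fed by both inductive hypotheses, which is why your induction must be a joint one. Both routes close with the same exponent bookkeeping ($\|\phi\|_{L^2}^2\|\phi\|_{FL^1}^{4j-1}$, time power $t^j$, constants absorbed by choosing $C$ large); yours trades the paper's choice of a distinguished non-terminal child for an extra interpolation identity and a slightly heavier joint induction, while the paper's version keeps the two estimates logically decoupled (the $FL^\infty$ induction only consumes the already-proved $FL^1$ bound).
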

	
	\begin{proof}
		In view of Lemma \ref{lem:1127} $(i)$ and \eqref{eq:112601},
		it suffices to prove that,
		for any $\phi\in FL^1$ and $\mathcal{T}\in\mathbf{T}(j)$,  $j\in\mathbb{N}$,
		\begin{align}\label{eq:1126}
			\left\|\Psi_{\phi}(\mathcal{T})(t)\right\|_{FL^1}&\lesssim t^j\|\phi\|^{4j+1}_{FL^1},
		\end{align}
		and for any $j\in\mathbb{N}_+$,
		\begin{align}
			\left\|\Psi_{\phi}(\mathcal{T})(t)\right\|_{FL^\infty}&\lesssim  t^j\|\phi\|^2_{L^2}\|\phi\|^{4j-1}_{FL^1},  \label{eq:112603}
		\end{align}
		where the implicit constants are universal.
		
		We prove \eqref{eq:1126} by induction.
		If $j=0$, then there is only one tree $\mathcal{T}$, consisting of one node, in $\mathbf{T}(0)$,
		and $\Psi_\phi(\mathcal{T})(t)=S(t)\phi$.
		Since $S(t)$ is unitary in $FL^1$,
		\eqref{eq:1126} follows. 
		
		For general $j\geq 0$,
		assuming that \eqref{eq:1126} holds for all nonnegative 
		integers $k\leq j$,
		we shall prove that it is also valid for $k=j+1$.
		
		For this purpose, given any $\mathcal{T}\in\mathbf{T}(j+1)$, we denote the five children of the root node $a$ by $a_1,\dots,a_5$.
		Let $\mathcal{T}_l$, $1\leq l \leq 5$,
		be the subtrees of $\mathcal{T}$ such that $(i)$ the root node  of $\mathcal{T}_l$ is $a_l$; $(ii)$
		if assuming $\mathcal{T}_l\in\mathbf{T}(j_l)$ with $j_l\leq j$ for each $l\in\{1,\dots,5\}$, then $j_1+\cdots+j_5=j$.
		Notice that $\Psi_{\phi}(\mathcal{T}_l)$ satisfies estimate \eqref{eq:1126} by the induction hypothesis.
		Since
		\begin{equation*}
			\Psi_\phi(\mathcal{T})(t)
			=\mathcal{I}[\Psi_{\phi}(\mathcal{T}_1),\Psi_{\phi}(\mathcal{T}_2),
			\Psi_{\phi}(\mathcal{T}_3),\Psi_{\phi}(\mathcal{T}_4),\Psi_{\phi}(\mathcal{T}_5)](t),
		\end{equation*}
		we can use consecutively the Minkowski inequality, the unitarity of $S(t)$ on $FL^1$ and the algebraic property of $FL^1$ to compute
		\begin{align} \label{PsiphiT-FL1-esti}
			\|	\Psi_\phi(\mathcal{T})(t)\|_{FL^1}
			& \leq\int_0^t \prod\limits_{l=1}^5
			\|\Psi_\phi(\mathcal{T}_l)(\tau)\|_{FL^1} d\tau \notag \\
			& \lesssim \int_0^t\prod_{l=1}^5
			\left(\tau^{j_l}\|\phi\|^{4j_l+1}_{FL^1}\right)d\tau  \notag \\
			&\lesssim t^{j+1}\|\phi\|^{4(j+1)+1}_{FL^1},
		\end{align}
		where the implicit constants are independent of $\phi$ and $j$.
		This verifies \eqref{eq:1126} for $\mathcal{T} \in \mathbf{T}(j+1)$.
		Therefore, \eqref{eq:1126} holds for any $j\geq 0$ by the induction principle.
		
		In order to show \eqref{eq:112603}, we also use the induction argument.
		For $j=1$, there is only one $\mathcal{T}$ in $\mathbf{T}(1)$ consisting of the root node, its five children and the five edges linking to its five children.
		This implies
		$$\Psi_{\phi}(\mathcal{T})(t)=\mathcal{I}[S(\tau)\phi,S(\tau)\phi,S(\tau)\phi,S(\tau)\phi,S(\tau)\phi](t).$$
		We can use consecutively Minkowski's inequality, Young's inequality, H\"{o}lder's inequality 
		and {the unitarity} of $S(t)$ on $FL^1$ and $FL^2$ to compute
		{\begin{align} \label{eq:1127}
				\|\Psi_{\phi}(\mathcal{T})(t)\|_{FL^\infty}
				\lesssim& \int_0^t\|\widehat{(S(\tau)\phi)}\ast\widehat{(S(\tau)\phi)}\|_{L^\infty}
				\|\hat{\phi}\ast\hat{\phi}\ast\hat{\phi}\|_{L^1}d\tau \notag\\
				\lesssim& t\|\phi\|^2_{L^2}\|\phi\|_{FL^1}^3,
		\end{align}} 
		which verifies \eqref{eq:112603} for $j=1$.
		
		For general $j\geq 1$, assuming that \eqref{eq:112603} holds for all positive integers $k\leq j$.
		For any $\mathcal{T}\in\mathbf{T}(j+1)$,
		as in the previous arguments,
		we get the subtrees $\{\mathcal{T}_{l},l=1,\dots,5\}$ of $\mathcal{T}$, such that $\mathcal{T}_l\in\mathbf{T}(j_l)$ with $j_l\leq j$ and $j_1+\cdots +j_5=j$.
		Since $j\geq 1$, there are at least one among $\{j_1,\dots,j_5\}$ that is not smaller than $1$.
		Without loss of generality, we assume $j_1\geq 1$,
		and hence $\mathcal{T}_1$ has at least one non-terminal node.
		Then, using Minkowski's inequality and Young's inequality we get
		\begin{equation}
			\|\Psi_{\phi}(\mathcal{T})(t)\|_{FL^\infty}
			\lesssim \int_0^t \|\Psi_\phi(\mathcal{T}_1)(\tau)\|_{FL^\infty}
			{\prod_{l=2}^5 
				\|\Psi_\phi(\mathcal{T}_l)(\tau)\|_{FL^1}d\tau.}
		\end{equation}	
		This along with the induction hypothesis (since $j_1\leq j$) and \eqref{eq:1126} implies that
		\begin{align}
			\|\Psi_{\phi}(\mathcal{T})(t)\|_{FL^\infty}
			\lesssim& 
			\int_0^t \tau^{j_1}\|\phi\|^2_{L^2}\|\phi\|^{4j_1-1}_{FL^1}
			{\prod_{l=2}^4\left(\tau^{j_l}\|\phi\|^{4j_l+1}_{FL^1}\right)d\tau}  \notag \\
			\lesssim& \int_0^t \tau^{j} \|\phi\|^2_{L^2}\|\phi\|^{4(j_1+\cdots+j_5)+3}_{FL^1}d\tau \notag \\
			\lesssim& t^{j+1}\|\phi\|^2_{L^2}\|\phi\|^{4(j+1)-1}_{FL^1}.
		\end{align}
		This verifies \eqref{eq:112603} for $\mathcal{T} \in \mathbf{T}(j+1)$ and hence \eqref{eq:112603} is valid for any $j\geq 1$ by the induction principle.
		This completes the proof.
	\end{proof}
	
	\begin{lemma}\label{lem:11270}
		Given any $1\leq p\leq \infty$, 	
		there exists some positive constant $C$ such that
		\begin{equation} \label{Thetauphi-Thetaphi}
			\left\|\Theta_j(u_0+\phi)(t)-\Theta_j(\phi)(t)\right\|_{FL^p}
			\leq C^jt^j\|u_0\|_{FL^p}\left(\|u_0\|^{4j}_{FL^1} + \|\phi\|^{4j}_{FL^1} \right)
		\end{equation}
		holds for all $u_0\in FL^p\cap FL^1$, 
		$\phi \in FL^1$ and $j\in \mathbb{N}$.
	\end{lemma}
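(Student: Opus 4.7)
The plan is to exploit the full multilinearity of $\Psi(\mathcal{T};\cdot,\ldots,\cdot)$ in its $4j+1$ terminal inputs. I would first expand $\Psi_{u_0+\phi}(\mathcal{T}) = \Psi(\mathcal{T};u_0+\phi,\ldots,u_0+\phi)$ by multilinearity into $2^{4j+1}$ terms indexed by $\varepsilon\in\{0,1\}^{4j+1}$, where the $k$-th terminal node carries $u_0$ if $\varepsilon_k=1$ and $\phi$ if $\varepsilon_k=0$. The term with $\varepsilon=0$ is exactly $\Psi_\phi(\mathcal{T})$, so the subtraction leaves the $2^{4j+1}-1$ mixed terms, each carrying at least one copy of $u_0$. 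Summing over $\mathcal{T}\in\mathbf{T}(j)$ produces the analogous decomposition of $\Theta_j(u_0+\phi)-\Theta_j(\phi)$.

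The key auxiliary ingredient I would establish is a \emph{mixed} multilinear estimate: for any $\mathcal{T}\in\mathbf{T}(j)$, any functions $\psi_1,\ldots,\psi_{4j+1}$, and any distinguished index $k_0\in\{1,\ldots,4j+1\}$,
\begin{equation*}
\bigl\|\Psi(\mathcal{T};\psi_1,\ldots,\psi_{4j+1})(t)\bigr\|_{FL^p} \leq c^j t^j \|\psi_{k_0}\|_{FL^p} \prod_{k\neq k_0}\|\psi_k\|_{FL^1}.
\end{equation*}
The proof is by induction on $j$, parallel to the argument for \eqref{eq:1126} in Lemma~\ref{Lem-Multiesti-Theta}. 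In the inductive step, the distinguished terminal node $a_{k_0}$ lies in exactly one of the five subtrees $\mathcal{T}_l$ rooted at the children of the root; I would apply the inductive hypothesis to that subtree (in $FL^p$) and the pure $FL^1$ bound \eqref{eq:1126} to the remaining four subtrees. The Duhamel iteration then reduces to the embedding $FL^p\cdot FL^1\cdot FL^1\cdot FL^1\cdot FL^1\hookrightarrow FL^p$, which on the Fourier side is Young's inequality $L^p\ast L^1\ast L^1\ast L^1\ast L^1\subset L^p$, together with the unitarity of $S(t)$ on $FL^p$; the complex conjugations present in \eqref{eq:duhamel} do not affect these $L^p$ norms.

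With the mixed estimate in hand, for each nonzero $\varepsilon$ with $m:=|\varepsilon|\geq 1$ I would pick $k_0$ to be any coordinate where $\varepsilon_{k_0}=1$, bounding the corresponding term by $c^j t^j \|u_0\|_{FL^p}\,\|u_0\|_{FL^1}^{m-1}\|\phi\|_{FL^1}^{4j+1-m}$. Since $(m-1)+(4j+1-m)=4j$, this product is dominated by $\|u_0\|_{FL^p}\bigl(\|u_0\|_{FL^1}^{4j}+\|\phi\|_{FL^1}^{4j}\bigr)$. Summing over the at most $2^{4j+1}$ admissible $\varepsilon$ and the at most $C_0^j$ trees in $\mathbf{T}(j)$ (Lemma~\ref{lem:1127}(i)) and absorbing all combinatorial factors into a single constant of the form $C^j$ yields \eqref{Thetauphi-Thetaphi}. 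The main obstacle I anticipate is purely bookkeeping: stating the mixed estimate so that the distinguished index $k_0$ propagates uniformly through the induction, since at each recursion one must commit to routing $k_0$ into the correct one of the five children. Once this is set up cleanly, the analytic input is the same combination of Young's inequality, the algebra property of $FL^1$, and the unitarity of $S(t)$ that already underlies Lemma~\ref{Lem-Multiesti-Theta}.
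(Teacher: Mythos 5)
Your proposal is correct and follows essentially the same route as the paper: expand by multilinearity into the $2^{4j+1}-1$ mixed terms containing at least one $u_0$, bound each by placing the distinguished $u_0$ factor in $FL^p$ and the rest in $FL^1$ via Young's inequality and the unitarity of $S(t)$, and sum over trees using Lemma \ref{lem:1127}$(i)$. In fact your inductive mixed multilinear estimate spells out precisely the step the paper compresses into the unproved bound \eqref{eq:000} (``use ideas in the proof of \eqref{eq:1126}''), so your version is, if anything, more complete.
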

	\begin{proof}
		By the multi-linearity of $\Psi_{\phi}$, we can get
		\begin{align}\label{eq:1111}
			\Theta_j(u_0+\phi)(t)-\Theta_j(\phi)(t)
			&= \sum_{\mathcal{T}\in\mathbf{T}(j)}\left(\Psi_{u_0+\phi}(\mathcal{T})-\Psi_{\phi}(\mathcal{T})\right)\nonumber\\	
			&= \sum_{\mathcal{T}\in\mathbf{T}(j)}\sum_{\phi_1,\dots,\phi_{4j+1}}\Psi(\mathcal{T};\phi_1,\dots,\phi_{4j+1})	
		\end{align}
		where the second summation runs over {all the choices of $\phi_1,\dots,\phi_{4j+1}\in\{u_0,\phi\}$ in which at least one $u_0$ appears.}
		By a simple combinatorical argument, we first get
		\begin{equation}\label{eq:11270}
			\sum_{\phi_1,\dots,\phi_{4j+1}} 1\leq 2^{4j+1}.
		\end{equation}
		
		We next use ideas in the proof of \eqref{eq:1126} and \eqref{PsiphiT-FL1-esti} to bound
		\begin{equation}\label{eq:000}
			\|\Psi(\mathcal{T};\phi_1,\dots,\phi_{4j+1})\|_{FL^p}
			\lesssim t^j \|u_0\|_{FL^p}\left(\|u_0\|^{4j}_{FL^1}+ \|\phi\|^{4j}_{FL^1} \right).
		\end{equation}
		
		Taking into account Lemma \ref{lem:1127} $(i)$,
		we can substitute \eqref{eq:11270} and \eqref{eq:000} into \eqref{eq:1111} to obtain \eqref{Thetauphi-Thetaphi}. This finishes the proof.
	\end{proof}
	
	By virtue of Lemmas \ref{Lem-Multiesti-Theta} and \ref{lem:11270},
	we can prove Proposition \ref{prop:lwp} by using the contraction mapping principle.
	The details are omitted here.
	
	Next, we shall prove Theorem \ref{thm:main} for $s$ satisfying \eqref{s<0}. 
	More precisely, we show that 
	
	\begin{proposition}\label{prop:norm:inflation}
		Let $d$ and $s$ be as in \eqref{s<0}. Let $u_0$ be any Schwartz function. Then,
		there exist a family $\{u_n\}_{n\geq 1}$ of local solutions to \eqref{eq:main}
		and a sequence $(t_n)_{n\geq1}$ of real numbers descending to $0$ as $n\rightarrow\infty$,
		such that
		\begin{equation} \label{un0-u0-Hs}
			\left\|u_n(0)-u_0\right\|_{H^s}<\frac{1}{n},
		\end{equation}
		and
		\begin{equation}  \label{un-tn-Hs}
			\|u_n(t_n)\|_{H^s}>n.
		\end{equation}
	\end{proposition}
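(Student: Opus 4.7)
The plan is to implement a high-to-low frequency cascade via the Picard series \eqref{eq:power:series}. By density of Schwartz functions in $H^s$, I can fix a Schwartz $u_0$ and look for Schwartz perturbations $\phi_n$ concentrated at a Fourier scale $N_n\to\infty$, of the form (cf.\ the forthcoming \eqref{eq:112707})
\begin{equation*}
\phi_n = R_n\, \chi \sum_{\xi^\ast\in\Sigma_n} e^{i\xi^\ast\cdot x},
\end{equation*}
with $\chi$ a fixed Schwartz bump, $R_n>0$ an amplitude, and $\Sigma_n$ a finite set of modes at scale $N_n$ arranged so that (i) $\|\phi_n\|_{H^s}\to 0$, (ii) $\|\phi_n\|_{FL^1}$ stays controlled so that Proposition \ref{prop:lwp} applies on a uniform time window $[0,T_n]$, and (iii) the quintic resonance equation $\xi_1^\ast-\xi_2^\ast+\xi_3^\ast-\xi_4^\ast+\xi_5^\ast=\xi$ is solvable many times for low output frequencies $|\xi|\lesssim 1$.

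Let $u_n = \sum_{j\geq 0}\Theta_j(u_0+\phi_n)$ be the resulting local solution on $[0,T_n]$, and pick $t_n\in(0,T_n]$ tending to zero. I split
\begin{equation*}
u_n(t_n) - u_0 = \bigl(S(t_n)u_0-u_0\bigr) + S(t_n)\phi_n + \Theta_1(u_0+\phi_n)(t_n) + \sum_{j\geq 2}\Theta_j(u_0+\phi_n)(t_n).
\end{equation*}
The first term vanishes in $H^s$ because $u_0$ is Schwartz and $t_n\downarrow 0$; the second satisfies $\|S(t_n)\phi_n\|_{H^s}=\|\phi_n\|_{H^s}\to 0$ by unitarity of $S(t)$; and the tail $j\geq 2$ can be estimated in $L^2$ (hence in $H^s$ since $s\leq 0$) by combining Lemma \ref{Lem-Multiesti-Theta} with the interpolation $\|f\|_{L^2}\leq \|f\|_{FL^1}^{1/2}\|f\|_{FL^\infty}^{1/2}$, giving a geometric series in $C t_n\|u_0+\phi_n\|_{FL^1}^{4}$ which is $o(1)$ for small $t_n$. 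Moreover, Lemma \ref{lem:11270} with $p=2$ shows that the mixed contribution $\Theta_1(u_0+\phi_n)(t_n)-\Theta_1(\phi_n)(t_n)$ has $L^2$-norm $O(t_n)$, hence $H^s$-norm $o(1)$.

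It therefore suffices to prove $\|\Theta_1(\phi_n)(t_n)\|_{H^s}\to\infty$, which is precisely the content of the forthcoming cascade Lemma \ref{prop:lower:bound}. Unfolding the Duhamel formula,
\begin{equation*}
\widehat{\Theta_1(\phi_n)}(\xi,t_n) = i\mu\int_0^{t_n}\!\! e^{-i(t_n-\tau)|\xi|^4}\!\!\int_{\xi_1-\xi_2+\xi_3-\xi_4+\xi_5=\xi}\!\!\!\! e^{i\tau\Omega}\prod_{k=1}^5 \hat\phi_n^{\epsilon_k}(\xi_k)\,d\xi'\,d\tau,
\end{equation*}
with phase $\Omega=\sum_k\epsilon_k|\xi_k|^4-|\xi|^4$. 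Choosing $t_n\lesssim N_n^{-4}$ keeps $|\tau\Omega|\lesssim 1$ on the support of the fivefold tensor of $\hat\phi_n$, so for each resonant quintuple in $\Sigma_n^5$ landing at $|\xi|\lesssim 1$ the time integral contributes essentially $t_n$; summing over all such tuples and using that $\langle\xi\rangle^s\sim 1$ on $|\xi|\lesssim 1$ yields a lower bound of the form $\|\Theta_1(\phi_n)(t_n)\|_{H^s}\gtrsim t_n R_n^{5} K_n$, where $K_n$ is a combinatorial factor growing with $|\Sigma_n|$.

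The main obstacle is the joint feasibility of the four competing constraints $\|\phi_n\|_{H^s}\to 0$, $t_n\lesssim N_n^{-4}$, $t_n\leq T_n\sim\|u_0+\phi_n\|_{FL^1}^{-4}$, and $t_n R_n^{5}K_n\to\infty$. With an explicit ansatz $R_n=N_n^{a}$, $|\Sigma_n|\sim N_n^{b}$, $t_n=N_n^{-c}$, these reduce to a system of linear inequalities in $(a,b,c)$, and the ranges $s\leq -\tfrac12$ in $d=1$ and $s<0$ in $d\geq 2$ are precisely where this system is solvable; the borderline $s=-\tfrac12$ in $d=1$ reflects the tight balance between the $H^s$ weight and the combinatorics of quintic resonance on a one-dimensional Fourier lattice.
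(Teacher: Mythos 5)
Your overall strategy is the same as the paper's (Picard series, isolate the first iterate, high-to-low cascade with $t_n\ll N_n^{-4}$ so the phase is coherent, then optimize parameters), but there is a genuine gap at the decisive quantitative step. You measure the output of $\Theta_1(\phi_n)$ only on the unit ball $|\xi|\lesssim 1$, where $\langle\xi\rangle^s\sim 1$, so your lower bound is $t_nR_n^5K_n$ with $K_n\lesssim|\Sigma_n|^4$. Writing $M=|\Sigma_n|$, your constraints are $\|\phi_n\|_{H^s}\sim R_nM^{1/2}N_n^{s}\to 0$, $t_n(R_nM)^4\ll 1$ (local time and tail control), $t_n\ll N_n^{-4}$, and $t_nR_n^5M^4\to\infty$. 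Since $t_nR_n^5M^4=R_n\cdot t_nR_n^4M^4\ll R_n$, growth forces $R_n\to\infty$; combining $t_n\ll N_n^{-4}$ with $t_nR_n^5M^4\to\infty$ gives $R_n^5M^4\gg N_n^4$, while smallness in $H^s$ gives $N_n^{|s|}\gg R_nM^{1/2}$. Eliminating $N_n$ yields $(R_nM^{1/2})^{4/|s|}\ll R_n^5M^4$, i.e. $R_n^{\,4/|s|-5}M^{\,2/|s|-4}\ll 1$, which with $R_n\to\infty$ is possible only when $|s|>\tfrac12$. So your scheme, as stated, cannot reach $-\tfrac12\le s<0$ for $d\ge 2$ nor the endpoint $s=-\tfrac12$ for $d=1$ — exactly the delicate part of Theorem \ref{thm:main} — and your final assertion that ``the system of linear inequalities is solvable precisely in the claimed ranges'' is not verified and is in fact false for your ansatz.

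The missing idea, which is what the paper's construction supplies, is to let the Fourier support of the perturbation be cubes of side $A=A(n)\to\infty$ (as in \eqref{eq:112707}), so that the low-frequency output of the first iterate fills a cube $Q_A$ with $|\widehat{\Theta_1(\phi_n)}|\gtrsim tR^5A^{4d}$ there (Lemmas \ref{lem:112705} and \ref{prop:lower:bound}); measuring its $H^s$ mass over all of $Q_A$ gains the factor $f_s(A)=\|\langle\xi\rangle^s\|_{L^2(Q_A)}$ ($=A^{d/2+s}$ for $s>-\frac d2$, $(\log A)^{1/2}$ at $s=-\frac d2$), and it is precisely this extra factor that makes the analogue of your constraint system (Claim \ref{claim:3}) solvable in the full range, including the borderline cases. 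Two secondary points: your tail estimate for $j\ge2$ via $\|f\|_{L^2}\le\|f\|_{FL^1}^{1/2}\|f\|_{FL^\infty}^{1/2}$ is lossier than the paper's, which exploits that $\mathrm{supp}\,\mathcal F[\Theta_j(\phi_n)]$ sits in $O(C^j)$ cubes of side $\sim A$ to bound the $H^s$ norm by $f_s(A)$ times the $FL^\infty$ norm — with growing $A$ the cruder $L^2$ bound loses a factor $A^{-s}$ and would not close in the near-critical regime; and for the claimed coherent summation over resonant tuples you need sign-definiteness of the Fourier data (take $\hat\chi\ge 0$, as the paper does with indicator functions), which you should state. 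Also note Lemma \ref{lem:11270} gives $\|\Theta_1(u_0+\phi_n)-\Theta_1(\phi_n)\|_{L^2}=O(t_n\|u_0+\phi_n\|_{FL^1}^4)$, not $O(t_n)$; this is harmless under $t_n\|\phi_n\|_{FL^1}^4\ll1$ but should be tracked.
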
 
	
	In order to prove Proposition \ref{prop:norm:inflation},
	we choose the functions $\phi_n$, $n\geq 1$, such that
	\begin{equation}\label{eq:112707}
		\hat{\phi}_n(\xi)=R \left(1_{N\mathbf{e}_1+Q_A}(\xi)+1_{2N\mathbf{e}_1+Q_A}(\xi)\right),
	\end{equation}
	where $Q_A:=[-A/2,A/2)^d$,  $R=R(n)>0$, $A=A(n)$
	$N=N(n)$,
	and $\mathbf{e}_1=(1,0,\cdots,0) \in \mathbb{R}^d$. These parameters {will be chosen properly} (see Claim \ref{claim:3}) so that
	\begin{equation}\label{eq:11280}
		RA^d\gg \|u_0\|_{FL^1}\sim 1, 
		\ \ \ 1\ll A\ll N.
	\end{equation}
	We can compute directly to find that
	\begin{equation}\label{par:N:A}
		\|\phi_n\|_{H^s}\sim R A^{\frac{d}{2}}N^s,\ \  \|\phi_n\|_{FL^1}\sim R A^{
			d}.
	\end{equation}
	
	By Proposition \ref{prop:lwp},
	there exists a solution $u_n\in C([0,T]; FL^1)$ to \eqref{eq:4nls},
	starting from 
	{$$u_{0,n}:=u_0+\phi_n$$} 
	where 
	{\begin{equation} \label{T-RAd}
			T\sim \left(\|u_0+\phi_n\|_{FL^1}\right)^{-4}\sim \left(RA^d\right)^{-4}.
	\end{equation}} 
	What's more, the solution $u_n(t)$ is indeed given by a series expansion
	\begin{equation*}
		u_n(t)=\sum_{j=0}^\infty\Theta_j(u_0+\phi_n)(t),
	\end{equation*}
	which is uniformly convergent in $FL^1$ with respect to $t\in [0,T]$.

	\begin{lemma}\label{lem:key}
		Let $d$ and $s$ be as in \eqref{s<0}. Let $\phi_n$ and $u_{0,n}$ be as above. For $t\in(0,T]$, we have
		\begin{align}	
			\|u_{0,n}-u_0\|_{H^s}\lesssim&\  RA^{\frac{d}{2}}N^s.\label{eq:112701}\\
			\left\|\Theta_0(u_{0,n})\right\|_{H^s}\lesssim&\  1+RA^{\frac{d}{2}}N^s.\label{eq:112702}\\
			\left\|\Theta_1(u_{0,n})(t)-\Theta_1(\phi_n)(t)\right\|_{H^s}\lesssim&\  t\|u_0\|_{L^2}\left(1+\|\phi_n\|^4_{FL^1}\right)\sim t\|u_0\|_{L^2}R^4A^{4d}.\label{eq:112703}
		\end{align}
		Moreover, for any $j\in \mathbb{N}_+$,
		\begin{equation}\label{error:j}
			\left\|\Theta_j(u_{0,n})(T)\right\|_{H^s}\leq C^j T^j(RA^d)^{4j}\left(Rf_s(A)+\|u_0\|_{L^2}\right).
		\end{equation}
		Here {$C>0$ is a universal constant}
		and
		\begin{equation}\label{eq:f}
			f_s(A):=\left\{
			\begin{split}
				&1\ \ \ \ \ \ \ \ \ \ \ \ \ \  \ \, \mathrm{if}\ s<-\frac{d}{2},\\
				&\left(\log A\right)^{\frac{1}{2}}\ \ \ \mathrm{if}\ s=-\frac{d}{2},\\
				&A^{\frac{d}{2}+s}\ \ \ \ \ \ \ \ \ \ \mathrm{if}\ \ s>-\frac{d}{2}.
			\end{split}
			\right.
		\end{equation}
		In particular, for $T$ small enough so that $TR^4A^{4d}\ll 1$,
		we have
		\begin{equation}\label{eq:112704}
			\left\|\sum_{j=2}^\infty\Theta_j(u_{0,n})(T)\right\|_{H^s}\lesssim T^2R^9A^{8d}f_s(A).
		\end{equation}
	\end{lemma}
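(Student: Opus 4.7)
The first three estimates are essentially direct. Bound \eqref{eq:112701} follows from Plancherel together with the explicit form of $\hat{\phi}_n$: on $\mathrm{supp}(\hat\phi_n)=(N\mathbf{e}_1+Q_A)\cup (2N\mathbf{e}_1+Q_A)$ one has $\langle\xi\rangle\sim N$ and $|\hat\phi_n|\equiv R$, giving $\|\phi_n\|_{H^s}^2\sim R^2 A^d N^{2s}$. Bound \eqref{eq:112702} is then immediate from the unitarity of $S(t)$ on $H^s$ and the triangle inequality $\|u_{0,n}\|_{H^s}\leq \|u_0\|_{H^s}+\|\phi_n\|_{H^s}$. For \eqref{eq:112703} I apply Lemma \ref{lem:11270} with $p=2$ and $j=1$, together with the embedding $\|\cdot\|_{H^s}\leq\|\cdot\|_{L^2}=\|\cdot\|_{FL^2}$ (valid for $s\leq 0$); the resulting $1+\|\phi_n\|_{FL^1}^4$ is dominated by $R^4A^{4d}$ under \eqref{eq:11280}.

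The proof of \eqref{error:j} begins with the multi-linear decomposition
\begin{equation*}
\Theta_j(u_{0,n})=\Theta_j(\phi_n)+\sum_{\mathcal{T}\in\mathbf{T}(j)}\sum_{\vec\phi}\Psi(\mathcal{T};\vec\phi),
\end{equation*}
where the second sum runs over labellings $\vec\phi\in\{u_0,\phi_n\}^{4j+1}$ containing at least one $u_0$. For the pure cascade term $\Theta_j(\phi_n)$, the key geometric fact is that $\widehat{\Theta_j(\phi_n)}$ is supported in a union of $O(j)$ cubic clumps of side $\sim jA$ located at integer multiples $mN\mathbf{e}_1$ with $|m|\lesssim j$; this follows by tracking Fourier supports through the $(4j+1)$-fold convolution structure induced by the tree, noting that each terminal contributes $\pm\xi_i$ with $\xi_i\in\{N,2N\}\mathbf{e}_1+Q_A$. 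I then estimate
\begin{equation*}
\|\Theta_j(\phi_n)\|_{H^s}^2\leq \|\widehat{\Theta_j(\phi_n)}\|_{L^\infty}^2\int_{\mathrm{supp}}\langle\xi\rangle^{2s}\,d\xi,
\end{equation*}
bounding $\|\widehat{\Theta_j(\phi_n)}\|_{L^\infty}\lesssim C^jt^j\|\phi_n\|_{L^2}^2\|\phi_n\|_{FL^1}^{4j-1}\sim C^jt^j R^{4j+1}A^{4jd}$ via Lemma \ref{Lem-Multiesti-Theta}, and evaluating the integral separately on the low-frequency clump near the origin (which in the three regimes $s$ versus $-d/2$ produces $\sim f_s(A)^2$ by direct computation) and on the high-frequency clumps, each contributing $N^{2s}(jA)^d$ and negligible relative to $f_s(A)^2$ in every regime under the asymptotic $A\ll N$ of \eqref{eq:11280}. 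This yields the $Rf_s(A)$ part of \eqref{error:j}.

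For each mixed term, I use $\|\Psi(\mathcal{T};\vec\phi)\|_{H^s}\leq\|\Psi(\mathcal{T};\vec\phi)\|_{L^2}$, single out one terminal $i_0$ with $\phi_{i_0}=u_0$, and propagate the $L^2$-norm along the unique path from $i_0$ to the root by applying the Young bound $\|f_1*f_2*f_3*f_4*f_5\|_{L^2}\leq \|f_1\|_{L^2}\prod_{i\geq 2}\|f_i\|_{L^1}$ at each node on this path, while handling the off-path subtrees by the pure $FL^1$ bound \eqref{eq:1126}. Since $u_0$ is Schwartz we have $\|u_0\|_{FL^1}\lesssim 1$ while $\|\phi_n\|_{FL^1}\sim RA^d$, so each such $\Psi$ is bounded by $C^jt^j\|u_0\|_{L^2}(RA^d)^{4j}$; summing over the $\lesssim C_0^j\cdot 2^{4j+1}$ mixed labellings and trees produces the $\|u_0\|_{L^2}$ contribution to \eqref{error:j}. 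Finally, \eqref{eq:112704} follows by summing the geometric series in $j\geq 2$ under the smallness $TR^4A^{4d}\ll 1$ and absorbing $\|u_0\|_{L^2}$ into $Rf_s(A)$ once $R$ is taken large. The main technical obstacle is the cascade step: the Fourier-support localization together with the three-regime case analysis for the weighted $L^2$ integral must be carried out carefully, and the parameter regime \eqref{eq:11280} is what makes the high-frequency clumps negligible.
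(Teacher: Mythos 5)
Your proposal is correct and takes essentially the same route as the paper: the first three bounds via Plancherel/unitarity and Lemma \ref{lem:11270} with $p=2$, then for \eqref{error:j} the split into the pure term $\Theta_j(\phi_n)$ (handled by Fourier-support localization, the weighted $L^2_\xi$ computation producing $f_s(A)$, and the $FL^\infty$ bound of Lemma \ref{Lem-Multiesti-Theta}) plus the mixed terms with at least one $u_0$ (handled by the $L^2$-based multilinear bound, i.e.\ Lemma \ref{lem:11270}), and finally geometric summation in $j\geq 2$. The only cosmetic differences are that you track the clump centers $mN\mathbf{e}_1$ explicitly where the paper just translates the support to a cube at the origin using the monotonicity of $\langle\xi\rangle^s$, and that the absorption of $\|u_0\|_{L^2}$ in \eqref{eq:112704} really rests on $Rf_s(A)\gtrsim \|u_0\|_{L^2}$ (guaranteed by the later parameter choices, where $R$ need not be large), which matches the paper's own implicit use of $\|u_0\|_{L^2}\lesssim f_s(A)$.
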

	
	\begin{proof}
		Since $u_{0,n}-u_0=\phi_n$, estimate \eqref{eq:112701} follows immediately from \eqref{par:N:A}.
		Since $\Theta_0(u_{0,n})(t)=S(t)(u_{0,n})$,
		estimate \eqref{eq:112702} follows from the triangle inequality and \eqref{par:N:A}. {Although the estimate \eqref{eq:112703} can be proved by a general argument (see the forthcoming \eqref{Thetaj-diff}), it can be shown directly, as is done now.} Since $s<0$, we have
		\begin{equation}
			\left\|\Theta_1(u_{0,n})(t)-\Theta_1(\phi_n)(t)\right\|_{H^s}\leq \left\|\Theta_1(u_{0,n})(t)-\Theta_1(\phi_n)(t)\right\|_{L^2}=\left\|\Theta_1(u_{0,n})(t)-\Theta_1(\phi_n)(t)\right\|_{FL^2}.
		\end{equation}
		Then, estimate \eqref{eq:112703} follows from Lemma \ref{lem:11270} and estimate \eqref{par:N:A}.
		
		We turn to prove \eqref{error:j}. Note that,  $\mathrm{supp}\,\hat{\phi}_n$ is the union of two disjoint cubes of volume $\sim A^d$.
		Moreover, for each $\mathcal{T}\in\mathbf{T}(j)$, the distribution $\Psi_{\phi_n}(\mathcal{T})$ is the $j$-fold iterated integration in the time variable
		of the $4j+1$ products of $S(t)\phi_n$ and their conjugates.
		Therefore, $\mathrm{supp}\, \mathcal{F}[\Psi_{\phi_n}(\mathcal{T})]$ is contained in the union of $2^{4j+1}$ cubes of volume $\sim A^d$.
		This implies that for each $\mathcal{T}\in\mathbf{T}(j)$,
		\begin{equation*}
			\left|\mathrm{supp}\, \mathcal{F}[\Psi_{\phi_n}(\mathcal{T})]\right|\leq c\left|C^jQ_A\right|.
		\end{equation*} 
		{Taking into account \eqref{eq:112601} and Lemma \ref{lem:1127} $(i)$ we get 
			\begin{align*}
				\left|\mathrm{supp}\, \mathcal{F}(\Theta_j(\phi_n))\right|\leq c\left|C^jQ_A\right|, 
			\end{align*}  
			for some other constants, which we still denote by $c$ and $C$.} 
		
		Since $s<0$, the map $\mathbb{R}^d\ni\xi\longmapsto\langle\xi\rangle^s$ is a decreasing function in $|\xi|$.
		It follows that
		\begin{align}\label{eq:090701}
			\left\|\langle\xi\rangle^s\right\|_{L^2_\xi(\mathrm{supp}\mathcal{F}(\Theta_j(\phi_n)))} \leq&\left\|\langle\xi\rangle^s\right\|_{L^2_\xi(cC^jQ_A)}  \leq C^jf_s(A), 
		\end{align} 
		where $f_s(A)$ is given by \eqref{eq:f}. 
		Taking into account \eqref{eq:112602} and \eqref{par:N:A}
		we lead to
		\begin{align} \label{Thetaj-phin-fA}
			\left\|\Theta_j(\phi_n)(t)\right\|_{H^s}
			\leq& \left\|\langle\xi\rangle^s\right\|_{L^2_\xi(\mathrm{supp}\mathcal{F}(\Theta_j(\phi_n)))}\ \left\|\Theta_j(\phi_n)\right\|_{FL^\infty} \notag \\
			\lesssim& C^jt^j\left(RA^d\right)^{4j}Rf_s(A).
		\end{align}
		
		Moreover, since $s<0$,
		we can use Lemma \ref{lem:11270} with $p=2$, \eqref{eq:11280} {and \eqref{par:N:A}} to compute 
		\begin{align} \label{Thetaj-diff}
			\left\|\Theta_j(u_{0,n})(t)-\Theta_j(\phi_n)(t)\right\|_{H^s}
			\leq&\left\|\Theta_j(u_0+\phi_n)(t)-\Theta_j(\phi_n)(t)\right\|_{L^2} \notag \\
			\leq&C^jt^j\|u_0\|_{L^2}\left(\|u_0\|^{4j}_{FL^1}+\|\phi_n\|^{4j}_{FL^1}\right)  \notag \\
			\sim& C^jt^j\|u_0\|_{L^2}\left(RA^d\right)^{4j}.
		\end{align}
		Note that this also proves \eqref{eq:112703}, by taking $j=1$.
		Thus, combining \eqref{Thetaj-phin-fA} and \eqref{Thetaj-diff}
		we obtain \eqref{error:j}.  Since $T$ is small such that $TR^4A^{4d}\ll1$ and $\|u_0\|_{L^2}\lesssim f_s(A)$,
		the last estimate \eqref{eq:112704} follows by adding \eqref{error:j} from $j=2$ to $\infty$.
	\end{proof}
	
	In view of Lemma \ref{lem:key},
	we see that the sum of the Picard iterations of orders
	larger than two are uniformly bounded.
	On the contrary,
	the forthcoming Lemma \ref{prop:lower:bound} shows that
	the first Picard iteration is bounded from below. 
	
	{	\begin{lemma}\label{prop:lower:bound}
			Let $s<0$ and $\phi_n$ be as in \eqref{eq:112707}.
			Then, for $0<t\ll N^{-4}$, we have
			\begin{equation}\label{eq:esti:key}
				\left\|\Theta_1(\phi_n)(t)\right\|_{H^s}\geq tR^5A^{4d}f_s(A).
			\end{equation}
	\end{lemma}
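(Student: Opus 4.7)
The plan is to compute $\Theta_1(\phi_n)(t)$ directly in Fourier variables, isolate the subset of frequency tuples that cascade the high modes $N$ and $2N$ back down to $|\xi|\lesssim A$, and lower bound the remaining ``resonant'' contribution. Writing $v_\tau = S(\tau)\phi_n$, the Duhamel formula \eqref{eq:duhamel} gives
$$
\mathcal{F}[\Theta_1(\phi_n)(t)](\xi)
= i\mu\, e^{it|\xi|^4}\int_0^t e^{-i\tau|\xi|^4}\,\big(\hat v_\tau * \overline{\hat v_\tau(-\cdot)} * \hat v_\tau * \overline{\hat v_\tau(-\cdot)} * \hat v_\tau\big)(\xi)\, d\tau,
$$
and, crucially, $\hat\phi_n$ is real and nonnegative, so each frequency factor contributes only a unimodular phase $e^{\pm i\tau|\eta|^4}$ on the support $\{N\mathbf{e}_1 + Q_A\} \cup \{2N\mathbf{e}_1 + Q_A\}$.

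The first step is to locate the high-to-low resonance. Writing each input frequency as $\xi_j = N_j\mathbf{e}_1 + a_j$ with $N_j\in\{N,2N\}$ and $a_j\in Q_A$, the convolution constraint $\xi_1-\xi_2+\xi_3-\xi_4+\xi_5=\xi$ together with $|\xi|\lesssim A\ll N$ forces the algebraic cancellation $N_1 - N_2 + N_3 - N_4 + N_5 = 0$. A direct combinatorial check shows that this identity is solved by exactly five ordered tuples in $\{N,2N\}^5$ (for instance $(N,2N,N,2N,2N)$). For each such tuple, restricting $a_2,a_3,a_4,a_5$ to the smaller cube $[-A/16, A/16]^d$ forces $a_1 := \xi + a_2 - a_3 + a_4 - a_5$ into $Q_A$ uniformly for $|\xi|\leq cA$, with $c$ small. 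This produces, for every such $\xi$, a constrained integration region of measure $\gtrsim A^{4d}$ on which the integrand $\prod_j \hat\phi_n(\xi_j) \equiv R^5$ is constant.

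The second step is to defuse the oscillation. Under the hypothesis $t \ll N^{-4}$, the full phase
$$
\tau\bigl(|\xi_1|^4-|\xi_2|^4+|\xi_3|^4-|\xi_4|^4+|\xi_5|^4-|\xi|^4\bigr)
$$
has modulus $\lesssim tN^4 \ll 1$ throughout the resonant region, so its cosine is bounded below by $1/2$ pointwise. Combined with the nonnegativity of $\hat\phi_n$, and noting that non-resonant tuples produce output frequencies of order at least $N$ and therefore cannot cancel any contribution inside the window $|\xi|\leq cA$, one obtains the pointwise lower bound
$$
\bigl|\mathcal{F}[\Theta_1(\phi_n)(t)](\xi)\bigr| \gtrsim \int_0^t R^5 A^{4d}\, d\tau \gtrsim t R^5 A^{4d}, \qquad |\xi|\leq cA.
$$

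Finally, squaring and integrating against $\langle\xi\rangle^{2s}$ yields
$$
\|\Theta_1(\phi_n)(t)\|_{H^s}^2 \gtrsim (tR^5 A^{4d})^2 \int_{|\xi|\leq cA}\langle\xi\rangle^{2s}\,d\xi \sim \bigl(tR^5 A^{4d} f_s(A)\bigr)^2,
$$
where the three regimes in the definition \eqref{eq:f} of $f_s$ come from integrating $\langle\xi\rangle^{2s}$ over a ball of radius $\sim A$ in the cases $2s < -d$, $2s = -d$, and $2s > -d$ respectively. The main obstacle I anticipate is the transition from the oscillatory time integral to a clean pointwise positive lower bound on the Fourier side: it is precisely the assumption $tN^4\ll 1$ together with the real, nonnegative structure of $\hat\phi_n$ that rules out destructive interference both across the five resonant configurations and within the time integration, and both ingredients are essential to the argument.
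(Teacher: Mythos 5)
Your proposal is correct and follows essentially the same route as the paper: a Fourier-side computation that isolates the resonant cube-center combinations, exploits the nonnegativity of $\hat\phi_n$ together with the smallness of the phase for $t\ll N^{-4}$ to avoid cancellation, lower bounds the constrained convolution by $\sim A^{4d}$ on a low-frequency cube, and then integrates $\langle\xi\rangle^{2s}$ to produce $f_s(A)$. The only cosmetic difference is that you verify the convolution lower bound by hand (restricting the inner variables to a smaller cube and enumerating the five resonant tuples), whereas the paper invokes an iterated version of Oh's convolution lemma (Lemma \ref{lem:112705}).
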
}

	\begin{proof} 
		The main tool in the proof is the following technical result, concerning the lower bound of the convolution of characteristic functions.
		\begin{lemma}  \label{lem:112705}
			There exists some positive constant $c$ such that
			for any  $a_k\in{\mathbb{R}^d}$, $1\leq k\leq 5$,
			\begin{equation*}
				cA^{4d}1_{(a_1+a_2+a_3+a_4+a_5)+Q_A}(\xi)\leq 1_{a_1+Q_A}\ast1_{a_2+Q_A}\ast1_{a_3+Q_A}\ast1_{a_4+Q_A}\ast 1_{a_5+Q_A}(\xi).
			\end{equation*}
		\end{lemma}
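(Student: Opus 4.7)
The plan is to first use translation invariance of convolution to reduce the inequality to a statement about a single five-fold convolution of $1_{Q_A}$. Writing the convolution as an iterated integral and substituting $y_k\mapsto y_k-a_k$ for $k=1,\dots,4$, one obtains
\[
(1_{a_1+Q_A}\ast\cdots\ast 1_{a_5+Q_A})(\xi)=(1_{Q_A}^{\ast 5})(\xi-a_1-\cdots-a_5).
\]
Thus, if $\xi\in (a_1+\cdots+a_5)+Q_A$, setting $\eta:=\xi-(a_1+\cdots+a_5)\in Q_A$, the lemma is equivalent to the reduced pointwise lower bound
\[
(1_{Q_A}^{\ast 5})(\eta)\geq c A^{4d}\qquad \text{for all }\eta\in Q_A.
\]

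To establish the reduced inequality, I would exhibit an explicit measurable subset of $Q_A^{4}$ of measure at least $c A^{4d}$ contained in $\bigl\{(y_1,\dots,y_4)\in Q_A^4:\eta-y_1-\cdots-y_4\in Q_A\bigr\}$. The natural choice is to concentrate each $y_k$ around $\eta/5$, so that the remainder $\eta-y_1-\cdots-y_4$ is itself close to $\eta/5$ and hence still in $Q_A$. Concretely, I would take $y_k\in \eta/5+[-A/12,A/12)^d$ for $k=1,\dots,4$. Since $|\eta_i|/5\leq A/10$ and $A/10+A/12<A/2$, the inclusion $y_k\in Q_A$ is immediate. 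For the residual one has componentwise
\[
|(\eta-y_1-\cdots-y_4)_i|\leq |\eta_i|/5+4\cdot A/12=A/10+A/3<A/2,
\]
so $\eta-\sum_{k=1}^4 y_k\in Q_A$ as well. This provides the desired subset of measure $(A/6)^{4d}$ and yields the lemma with, for instance, $c=6^{-4d}$.

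An alternative that is perhaps conceptually cleaner exploits the tensor-product factorization $(1_{Q_A}^{\ast 5})(\eta)=\prod_{i=1}^{d}(1_{[-A/2,A/2)}^{\ast 5})(\eta_i)$ to reduce the problem to one dimension. After the scaling $\eta_i=A t_i$, one is left bounding below the standard centered $B$-spline of order $5$ on the compact interval $[-1/2,1/2]$; since this spline is continuous, even, and strictly positive on its open support $(-5/2,5/2)$, it attains a positive minimum on $[-1/2,1/2]$, immediately producing the required bound.

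There is essentially no hard obstacle here: the only subtle point is to arrange simultaneously that each $y_k$ lies strictly inside $Q_A$ and that the residual $\eta-y_1-\cdots-y_4$ also does, for every $\eta\in Q_A$, including points near the boundary of $Q_A$. This is precisely why the construction above uses the slightly smaller cubes of side $A/6$ instead of a naive choice that would fail at $\partial Q_A$; the bookkeeping is a one-line triangle-inequality check as shown.
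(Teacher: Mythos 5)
Your proof is correct, and both of your routes (the explicit subset of measure $(A/6)^{4d}$, and the tensor-product reduction to the one-dimensional order-$5$ B-spline) give the stated bound with an explicit constant such as $c=6^{-4d}$; the arithmetic checks out, including the boundary bookkeeping $A/10+A/12<A/2$ and $A/10+A/3<A/2$, and the translation reduction $(1_{a_1+Q_A}\ast\cdots\ast 1_{a_5+Q_A})(\xi)=1_{Q_A}^{\ast 5}(\xi-a_1-\cdots-a_5)$ is valid. The paper, however, does not argue directly with the five-fold convolution: it simply iterates the two-cube convolution lemma of Oh (Lemma 3.5 in \cite{oh2017}), which gives $1_{a+Q_A}\ast 1_{b+Q_A}\gtrsim A^{d}\,1_{a+b+Q_A}$, and four applications of that bilinear bound produce the factor $A^{4d}$; no further details are written out. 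So your argument is a genuinely self-contained alternative: it avoids the external reference and the iteration, yields an explicit constant, and (via the B-spline factorization) makes transparent why the bound is uniform over the cube $(a_1+\cdots+a_5)+Q_A$, whereas the paper's approach is shorter on the page and reuses a known building block that also appears elsewhere in this line of norm-inflation arguments. Either proof serves the role the lemma plays in the lower bound for $\Theta_1(\phi_n)$.
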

		\noindent The proof of this lemma can be done by iterating \cite[Lemma 3.5]{oh2017}, which is omitted here.
		
		Let's continue the proof. We have on the Fourier side
		\begin{align}
			\mathcal{F}\left[\Theta_1(\phi_n)(t)\right](\xi)
			=&{i \mu e^{it|\xi|^4}
				\int_{\xi=\xi_1-\xi_2+\xi_3-\xi_4+\xi_5}\int_0^te^{-i\tau(|\xi|^4-|\xi_1|^4+|\xi|_2^4
					-|\xi_3|^4+|\xi_4|^4-|\xi_5|^4)}d\tau }  
			\notag \\
			&\qquad \qquad  \qquad \qquad \qquad \times\hat{\phi}_n(\xi_1)\bar{\hat{\phi}}_n(\xi_2)\hat{\phi}_n(\xi_3)\bar{\hat{\phi}}_n(\xi_4)\hat{\phi}_n(\xi_5)
			d\xi_1d\xi_2d\xi_3d\xi_4d\xi_5.
		\end{align}
		
		Note that,
		for any fixed $1\leq j\leq 5$, if $\xi_j\in\mathrm{supp}\hat{\phi}_n$,
		then {$|\xi_j|\lesssim N$}.
		Taking into account $|\tau|\ll N^{-4}$
		we get that for $\xi=\xi_1-\xi_2+\xi_3-\xi_4+\xi_5$,
		{\begin{equation*}
				\left|\tau(|\xi|^4-|\xi_1|^4+|\xi_2|^4-|\xi_3|^4+|\xi_4|^4-|\xi_5|^4)\right|\ll 1.
		\end{equation*}}
		It follows that
		{\begin{equation*}
				\Re \int_0^t 
				e^{-i\tau(|\xi|^4-|\xi_1|^4+|\xi_2|^4-|\xi_3|^4+|\xi_4|^4-|\xi_5|^4)}d\tau\geq \frac{1}{2}t.
		\end{equation*}} 
		{Therefore, in view of Lemma \ref{lem:112705},}  
		we have the lower bound
		\begin{equation*}
			\left|\mathcal{F}\left[\Theta_1(\phi_n)(t)\right](\xi)\right|\geq tR^5A^{4d}1_{Q_A}(\xi).
		\end{equation*}
		This, together with the fact that  $\|\langle\xi\rangle^s\|_{L^2_\xi(Q_A)}\sim f_s(A)$
		yields \eqref{eq:esti:key}, finishing the proof.
	\end{proof}
	
	{We are now ready to prove Proposition \ref{prop:norm:inflation}. } 
	
	\begin{proof}[Proof of Proposition \ref{prop:norm:inflation}.]
		Using the series expansion in \eqref{eq:power:series}, 
		Lemmas \ref{lem:key} and \ref{prop:lower:bound},
		we can derive that
		\begin{align}\label{eq:090702}
			\left\|u_n(t_n)\right\|_{H^s}
			=&\left\|\sum_{j=0}^\infty\Theta_j(u_{0,n})(t_n) \right\|_{H^s}\\
			\geq& \left\|\Theta_1(\phi_n)(t_n)\right\|_{H^s}-\left\|\Theta_1(u_{0,n})(t_n)-\Theta_1(\phi_n)(t_n) \right\|_{H^s}\nonumber\\
			&\ \qquad \qquad  \qquad -\left\|\Theta_0(u_{0,n})(t_n)\right\|_{H^s}-\left\|\sum_{j=2}^\infty\Theta_j(u_{0,n})(t_n)\right\|_{H^s}\nonumber\\
			\geq& t_nR^5A^{4d}f_s(A)-t_nR^4A^{4d}-RA^{\frac{d}{2}}N^s-t_n^2R^9A^{8d}f_s(A).\nonumber
		\end{align}
		\begin{claim}\label{claim:3}
			We can choose 
			the parameters $t_n,R,A,N$ such that
			\begin{align*}
				&(i)\ \ \  RA^{\frac{d}{2}}N^s\ll \frac{1}{n}, \\
				& (ii)\ \ \  t_nR^4A^{4d}\ll 1, \\
				& (iii)\ \  t_nR^5A^{4d}f_s(A)\gg t_n^2R^9A^{8d}f_s(A),\\
				& (iv)\ \  t_nR^5A^{4d}f_s(A)\gg n,\\
				&  (v)\ \ \  t_n\ll N^{-4},\\
				& (vi)\ \ \ \eqref{eq:11280} \ \mathrm{holds}.
			\end{align*}
		\end{claim}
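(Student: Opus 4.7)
The plan is to exhibit an explicit choice of parameters satisfying $(i)$--$(vi)$ by working in three regimes corresponding to the piecewise definition \eqref{eq:f} of $f_s(A)$. A preliminary observation that will simplify the bookkeeping is that $(iii)$ is literally the same inequality as $(ii)$ (just divide out the common factor $t_nR^5A^{4d}f_s(A)$), so there are effectively only five independent constraints to verify.

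I will pick parameters in the order $R$, then $A$, then $t_n$, then $N$. Dividing the desired $(iv)$ by the desired $(ii)$ produces the necessary relation $R\,f_s(A)\gg n$, which determines the rough magnitude of $R$ in each regime. For $s<-d/2$ we have $f_s(A)=1$, so $R$ must itself be polynomial in $n$ (e.g.\ $R\sim n^{2}$); for $s=-d/2$ we have $f_s(A)=(\log A)^{1/2}$, so one may take $R=1$ provided $A$ is chosen super-polynomially large in $n$ (e.g.\ $A\sim e^{n^{K}}$ for a suitable $K>0$); for $-d/2<s<0$ we have $f_s(A)=A^{d/2+s}$, so one may take $R$ to be a small negative power of $A$ (e.g.\ $R\sim n\,A^{-(d/2+s)+\epsilon}$ with $\epsilon$ small). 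With $R$ and $A$ fixed, I will set $t_n$ slightly below the upper bound $R^{-4}A^{-4d}$ imposed by $(ii)$ (say, smaller by a factor that tends to infinity with $n$, securing the gap between $(ii)$ and $(iv)$), and $N$ slightly above the lower bound $(nRA^{d/2})^{1/|s|}$ extracted from $(i)$.

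The main obstacle is the remaining constraint $(v)$, namely $t_n\ll N^{-4}$, which becomes a single scalar compatibility inequality between the two pinned-down scales. In the regime $s<-d/2$ the inequality closes thanks to $|s|>d/2\ge 1/2$, which lets the $R$-gain beat the $N$-cost; in the regime $-d/2<s<0$ it closes because the positive exponent $d/2+s$ in $f_s(A)$ creates room between the upper and lower bounds on $t_n$; and in the critical regime $s=-d/2$ (which also covers the one-dimensional endpoint $s=-\tfrac{1}{2}$) it closes precisely thanks to the logarithmic factor $(\log A)^{1/2}$ in $f_s(A)$, once $A$ has been taken super-polynomially large in $n$ so as to amplify that logarithm above any prescribed power of $n$. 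Finally, the two parts of $(vi)$, namely $RA^d\gg 1$ and $A\ll N$, are automatic once $A$ is sufficiently large in $n$, which completes the verification.
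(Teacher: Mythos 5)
Your proposal is correct and follows essentially the same strategy as the paper: observe that $(ii)$ implies $(iii)$, reduce to the necessary relation $Rf_s(A)\gg n$, and then make explicit parameter choices in the three regimes of $f_s(A)$ (polynomial $R$ for $s<-\frac d2$, $R=1$ with $A$ super-polynomially large at $s=-\frac d2$, and $R$ a negative power of $A$ for $-\frac d2<s<0$), checking the remaining compatibility coming from $(v)$. The only difference is bookkeeping — the paper pins $R,A,t_n$ to powers (and logarithmic corrections) of $N$ and chooses $N$ in terms of $n$ at the end, while you fix $R,A,t_n$ in terms of $n$ and pick $N$ last — which does not change the argument.
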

		From Claim \ref{claim:3} it follows that \eqref{un-tn-Hs} holds, i.e., 
		\begin{equation}
			\left\|u_n(t_n)\right\|_{H^s}\gg n.
		\end{equation}
		Moreover, by \eqref{eq:112701} and $(i)$ in Claim \ref{claim:3}, we have
		\begin{equation}
			\|u_{0,n}-u_0\|_{H^s}\ll \frac{1}{n},
		\end{equation}
		which verifies \eqref{un0-u0-Hs}.
		Up to now, we have proved 
		Proposition \ref{prop:norm:inflation} {modulo the proof of Claim \ref{claim:3}}, to which we now turn.
		
		\begin{proof}[Proof of Claim \ref{claim:3}]
			Note that $(ii)$ implies $(iii)$.
			Hence, it suffices to show $(i)$, $(ii)$, $(iv)$, $(v)$ and $(vi)$.
			Below we consider three cases
			$s<-\frac d2$ when $d\geq 1$,
			$s=-\frac d2$ when $d\geq 1$,
			and $-\frac d2 <s<0$ when $d\geq 2$,
			separately.
			
			\vspace{1mm}
			\noindent\underline{Case $s<-\frac{d}{2}$ when $d\geq 1$.} In this case, we choose
			\begin{equation}
				A=N^{\frac{1}{d}\left(1-\delta\right)},\ \ R=N^{2\delta}, \ \ t_n=N^{-4-5\delta}, \ \ \mathrm{and}\ \ 
				{N:= 10\max\{n^{\frac 1\delta}, n^{-\frac{2}{2s+1 + \delta}}\}, } 
			\end{equation}
			where $\delta>0$ is sufficiently small so that $s<-\frac{1}{2}-\frac{3}{2}\delta$.
			
			Then $N\gg1$, from which it follows that $(v)$ holds. Since $\delta >0$, we have $RA^d=N^{1+\delta}\gg 1$, verifying $(vi)$. Taking $n$ sufficiently large we have
			\begin{equation}
				RA^{\frac{d}{2}}N^s=N^{s+\frac{1}{2}+\frac{3}{2}\delta}\ll\frac{1}{n},\ \  t_nR^4A^{4d}=N^{-\delta}\ll\frac{1}{n},\  \ t_nR^5A^{4d}=N^{\delta}\gg n,
			\end{equation}
			which verify $(i),(ii)$ and $(iv)$, respectively.
			
			\vspace{1mm}
			\noindent\underline{Case $s=-\frac{d}{2}$ when $d\geq 1$.} In this case, we take
			\begin{equation}
				A=\frac{N^{\frac{1}{d}}}{(\log N)^{\frac{1}{16d}}},\ \ R=1,\ \  t_n=\frac{1}{N^4(\log N)^{\frac{1}{8}}},\ \ 
				\mathrm{and}\ \ 
				{ N := e^{100 n^{100}}.} 
			\end{equation}
			As in the first case, we have $N\gg 1$ and hence $(v)$ holds trivially. For such a choice of $N$, we also have $RA^d=N^{1}(\log N)^{-\frac{1}{16}}\gg 1$, verifying $(vi)$. 
			{By simple calculations, we find that}
			\begin{equation}
				RA^{\frac{d}{2}}N^s=N^{\frac{1}{2}(1-d)}\left(\log N\right)^{-\frac{1}{32}}\ll \frac{1}{n},
				\ t_nR^4A^{4d}=\frac{1}{(\log N)^{\frac{3}{8}}}\ll 1,
				\ t_nR^5A^{4d}(\log A)^{\frac{1}{2}}\sim (\log N)^{\frac{1}{8}}\gg n,
			\end{equation}
			which verify the conditions $(i),(ii)$ and $(iv)$, respectively.
			
			\vspace{1mm}
			\noindent\underline{Case $-\frac{d}{2}<s<0$ when $d\geq 2$.}
			In this case, we take
			\begin{equation}
				A=N^{\frac{2}{d}-\delta},\ \  R=N^{-1-s+\frac{d}{2}\delta-\theta},\ \  t_n=N^{-4+4s+3\theta+2d\delta}, \ \ 
				\mathrm{and}\ \ 
				{N:= n^{-\frac{10}{2\theta +\delta s}}}, 
			\end{equation}
			{where $1\gg \delta\gg\theta>0$} are small parameters such that
			\begin{equation}
				-s\delta >2\theta\ \ \mathrm{and}\ \ -4s>3\theta+2d\delta.
			\end{equation}
			Since $3\theta+2d\delta+4s<0$, we have $N\gg 1$ and hence $(v)$ holds trivially. Since both $\delta$ and $\theta$ are small, we have $RA^d=N^{1-s-\frac{d}{2}\delta-\theta}\gg 1$, verifying $(vi)$.
			Taking $n$ sufficiently large  we have
			\begin{equation}
				RA^{\frac{d}{2}}N^s=N^{-\theta}\ll \frac{1}{n},
				\ t_nR^4A^{4d}=N^{-\theta}\ll 1,
				\ t_nR^5A^{4d}A^{\frac{d}{2}+s}=N^{-s\left(1-\frac{2}{d}\right)+\left(-2\theta-s\delta\right)}\gg n
			\end{equation}
			which verify the conditions $(i),(ii)$ and $(iv)$, respectively.
			
		\end{proof}
		Therefore, we complete the proof of Proposition \ref{prop:norm:inflation}.  
	\end{proof}

	\section{The case $0<s<s_{cr}(d)$ for $3\leq d\leq 5$} \label{Sec-s-posit}
	
	In this section,
	we are going to prove
	Theorem \ref{thm:main22}.
	More precisely, we will show
	\begin{proposition}\label{prop:un}
		Let $3\leq d\leq 5$ and $0<s<s_{cr}(d)$.
		Then, for any Schwartz function $u_0$,
		there exists a family  $\{u_n\}_{n\geq 1}$ of solutions to \eqref{eq:main} with $\mu=1$, 
		such that for any $T>0$, 
		\begin{equation}
			\|u_n(0)-u_0\|_{H^s}\rightarrow 0\ \ \mathrm{and}\ \ \|u_n\|_{C([0,T],H^s)}\rightarrow \infty,\ \ as\ n\to \infty. 
		\end{equation}
	\end{proposition}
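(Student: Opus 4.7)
The plan is to implement the low-to-high frequency cascade scheme outlined after Theorem \ref{thm:main22}, in the spirit of \cite{burqtzvetkov2008,xia2021}. By density I may assume $u_0$ is Schwartz, and I write $u^L(t):=S(t)u_0$ for its linear propagation. Fix a smooth bump function $\varphi$ with compact support, and choose scaling parameters with the ansatz
\begin{equation*}
\psi_n(x) := \kappa_n\,\varphi(nx),\qquad \kappa_n=n^\alpha,\quad 0<\alpha<\tfrac d2-s,
\end{equation*}
so that $\|\psi_n\|_{H^s}\sim n^{\alpha+s-d/2}\to 0$, which is property (i). Note that the range $\alpha\in(0,\tfrac d2-s)$ is non-empty precisely because $s<s_{cr}(d)=\tfrac d2-1<\tfrac d2$.

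The ODE \eqref{ode} admits the explicit solution $v_n(t,x)=\psi_n(x)\exp(it|\psi_n(x)|^4)$, and this is the oscillation carrier that will drive the norm inflation. To establish (ii.a), I would invoke \cite[Lemma A.3]{burqtzvetkov2008}, which quantifies the growth of the $H^s$-norm of $f\,e^{ig}$ in terms of $\|\nabla g\|_{L^\infty}$. Since $\|\nabla|\psi_n|^4\|_{L^\infty}\sim \kappa_n^4 n$ and $\|\psi_n\|_{L^2}\sim \kappa_n n^{-d/2}$, this yields the heuristic lower bound
\begin{equation*}
\|v_n(t)\|_{H^s}\gtrsim (t\kappa_n^4 n)^s\|\psi_n\|_{L^2}\sim t^{s}\,n^{\alpha(4s+1)+s-d/2}.
\end{equation*}
Setting $t_n:=n^{-\beta}$, I would choose $\alpha$ slightly above $(d/2-s)/(4s+1)$ and $\beta>0$ slightly positive to enforce simultaneously the smallness $n^{\alpha+s-d/2}\to 0$ and the blow-up $t_n^s n^{\alpha(4s+1)+s-d/2}\to\infty$; these requirements are compatible precisely when $\beta<4(d/2-s)$, which follows from $s<s_{cr}(d)$.

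The main technical step is (ii.b). Setting $w_n:=u_n-u^L-v_n$ and using the equations for $u_n$, $u^L$, and $v_n$, the remainder satisfies
\begin{equation*}
i\partial_t w_n+\Delta^2 w_n = -\Delta^2 v_n + \bigl[|u^L+v_n+w_n|^4(u^L+v_n+w_n)-|v_n|^4 v_n\bigr],\qquad w_n(0)=0.
\end{equation*}
I would control the semi-classical energy $E_n[w_n]=n^s\|w_n\|_{L^2}+n^{s-4}\|\Delta^2 w_n\|_{L^2}$ introduced in the introduction via a bootstrap on $[0,t_n]$. The delicate piece is the forcing $-\Delta^2 v_n$: differentiating $e^{it|\psi_n|^4}$ up to four times produces polynomial weights in $t\kappa_n^4 n$ of degree at most four, and the factor $n^{s-4}$ in the second term of $E_n$ is tailored to absorb precisely this growth over the window $t\le t_n$. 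The nonlinear difference splits into a term linear in $w_n$ (closed by a Gronwall-type estimate inside the bootstrap) and cross terms mixing $u^L$, $v_n$, and $w_n$; the latter are handled by H\"older's inequality and Sobolev embedding, using the Schwartz decay of $u^L$ and the compact support of $\psi_n$. Closing the bootstrap yields $E_n[w_n]\to 0$ on $[0,t_n]$, and interpolation between the $L^2$ and $\Delta^2 L^2$ pieces of $E_n$ (valid since $0<s<s_{cr}(d)<4$) upgrades this to $\|w_n\|_{L^\infty([0,t_n];H^s)}\to 0$; combined with the uniform $H^s$-boundedness of $u^L$ this gives (ii.b), and together with (ii.a) the decomposition $u_n=u^L+v_n+w_n$ yields $\|u_n(t_n)\|_{H^s}\to\infty$.

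The principal obstacle is the delicate balance in the parameters $\alpha,\beta$: the forcing $\Delta^2 v_n$ must be just small enough at scale $E_n$ for the bootstrap to close, while $v_n$ itself must carry enough high-frequency oscillation to inflate the $H^s$-norm. A secondary obstacle is that $u_n$, the classical solution of \eqref{eq:main} launched from $u_0+\psi_n$, must actually exist throughout $[0,t_n]$; this is where the defocusing hypothesis $\mu=1$ and the dimensional restriction $3\le d\le 5$ enter, through the long-time regular existence results of \cite{dinh2018b,ghanmisaanouni2016}.
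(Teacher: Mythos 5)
Your overall architecture is the paper's: the explicit ODE profile $v_n=\psi_n e^{it|\psi_n|^4}$, the lower bound via \cite[Lemma A.3]{burqtzvetkov2008}, the decomposition $w_n=u_n-u^L-v_n$, the semiclassical energy $E_n$, a bootstrap on $[0,t_n]$, interpolation to pass from $E_n$ to $H^s$, and global existence from the defocusing energy-subcritical theory for $3\le d\le 5$. The genuine gap is your parameter ansatz $\psi_n=n^{\alpha}\varphi(nx)$, $t_n=n^{-\beta}$. For the Burq--Tzvetkov lower bound to inflate the $H^s$ norm, the accumulated phase $\lambda_n:=t_n\|\psi_n\|_{L^\infty}^4\sim n^{4\alpha-\beta}$ must tend to infinity, and with your exponents it does so \emph{polynomially}: since $\alpha<\tfrac d2-s$ makes $\alpha+s-\tfrac d2<0$, your blow-up condition forces $4\alpha-\beta\ge c>0$. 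But this same quantity reappears as the Gronwall exponent in the remainder estimate: the equation for $w_n$ contains $|v_n|^4w_n$ (and its derivatives up to order four, each costing an extra factor $t\kappa_n^4 n$), so the differential inequality for $E_n[w_n]$ has a linear coefficient of size at least $\kappa_n^4=n^{4\alpha}$, and integrating over $[0,t_n]$ produces a factor $e^{c\,t_n\kappa_n^4}=e^{c\,n^{4\alpha-\beta}}$. This superpolynomial amplification swallows any polynomial smallness of the forcing $-\Delta^2 v_n$, so the bootstrap you describe (``closed by a Gronwall-type estimate'') cannot close. Your compatibility condition $\beta<4(\tfrac d2-s)$ only reconciles property (i) with (ii.a); it says nothing about (ii.b), which is exactly where the scheme breaks.

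The paper resolves this tension by a logarithmic tuning that is unreachable with pure powers of $n$: it takes $\psi_n=\kappa_n n^{d/2-s}\phi(nx)$ with $\kappa_n=(\log\log n)^{-\delta_1}$, so $\|\psi_n\|_{H^s}\sim\kappa_n\to0$ only very slowly, and $t_n=(\kappa_n n^{d/2-s})^{-4}(\log n)^{\delta_2}$, so the accumulated phase is only $(\log n)^{\delta_2}$. Then each derivative of $v_n$ costs $n(\log n)^{\delta_2}$ (Lemma \ref{lem:con:vn}), the Gronwall factor is $e^{C(\log n)^{5\delta_2}}$, which is sub-polynomial and is absorbed by the gain $(\kappa_n n^{q_1})^{-\delta}$ in \eqref{eq:2208}, while the inflation obtained is only $\|v_n(t_n)\|_{H^s}\gtrsim\kappa_n(\log n)^{s\delta_2}\to\infty$ --- slow, but sufficient. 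In short: the mechanism and the auxiliary estimates you outline are the right ones, but the inflation rate must be logarithmic; the phase parameter driving (ii.a) and the Gronwall exponent controlling (ii.b) are the same quantity, and your polynomial regime makes them irreconcilable.
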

	
	Quite differently from the previous case where $s<0$, 
	the proof of Proposition \ref{prop:un} relies crucially on the ODE approach in \cite{burqtzvetkov2008,xia2021}. 
	
	To be precise, 
	let us first note that
	$V(t):=e^{i t}$  solves the ODE
	\begin{equation}\label{eq:ode}
		\left\{
		\begin{split}
			&i\frac{d}{dt}V +  |V|^4V=0, \\
			&V(0)=1.
		\end{split}
		\right.
	\end{equation}
	In particular, $V$ is $2\pi$-periodic.
	Moreover,
	straightforward computations show that
	\begin{equation} \label{vn}
		v_n(t) := \psi_n  V \left(t|\psi_n|^4 \right)
	\end{equation}
	solves the equation
	\begin{equation} \label{eq:ode:n}
		\left\{
		\begin{split}
			&i {\partial_t}v_n + |v_n|^4v_n =0,\\
			&v_n(0)=\psi_n.
		\end{split}
		\right.
	\end{equation}
	Here $\{\psi_n\}$ is a family of smooth functions, which we are going to specify.
	
	Setting
	\begin{equation}\label{eq:para}
		q_1:=\frac{d}{2}-s\ \ \mathrm{and}\ \ \kappa_n:=\left(\log\log n\right)^{-\delta_1}
	\end{equation}
	for some $\delta_1>0$ to be specified later, we take
	\begin{equation} \label{psin}
		\psi_n(x):=\kappa_n n^{q_1}\phi(nx),
	\end{equation}
	where $\phi$ is any given compactly supported non-zero function. Then, we have
	\begin{equation}\label{eq:091503}
		\|\psi_n\|_{H^s}\sim\kappa_n\|\phi\|_{H^s},
	\end{equation}
	which tends to zero as $n$ approaches infinity by the choice of $\kappa_n$ in \eqref{eq:para}. 
	
	Substituting \eqref{psin} into \eqref{vn} we obtain
	\begin{equation} \label{vn-V}
		v_n(t) =\kappa_n n^{q_1}\phi(nx)V\left(t\big(\kappa_n n^{q_1}\phi(nx)\big)^4\right).
	\end{equation}
	It then follows from \cite[Lemma A.3]{burqtzvetkov2008} that
	\begin{equation}\label{eq:091504}
		\|v_n(t_n)\|_{H^s}\geq \kappa_n  \left[t_n\big(\kappa_n n^{q_1}\big)^4\right]^s \to \infty,
	\end{equation}
	where $t_n$ is chosen appropriately of form
	\begin{equation}\label{time}
		t_n:=\big(\kappa_n n^{q_1}\big)^{-4}(\log n)^{\delta_2} 
	\end{equation}
	for some small $\delta_2>0$ to be specified later.

	We next compare $v_n$ to the space-time function $u_n$,
	which solves equation \eqref{eq:main} with the initial data $u_n(0):= u_0+ \psi_n$,
	namely,
	\begin{equation}\label{eq:main:n}
		\left\{
		\begin{split}
			& i\partial_tu_n+\Delta^2 u_n +  |u_n|^4u_n=0,   \\
			& u_n(0) =u_0+\psi_n.
		\end{split}
		\right.
	\end{equation}
	{		Let us digress a bit to specify the lifespan of $u_n$.
		Thanks to $3\leq d\leq 5$, the Cauchy problem \eqref{eq:main:n} falls into the energy sub-critical regime.  Since $u_0+\psi_n$ is a Schwartz function for each $n$, we can use the energy estimates to construct local solutions in energy space via the Picard iteration.  Noticing that the equation is defocusing, we can then use the conservation laws of mass \eqref{eq:mass} and energy \eqref{eq:energy} to extend this local solution to a global one. See \cite{dinh2018b} and \cite{ghanmisaanouni2016} for details. Thus the choice of $t_n$ is allowable for each $\delta_2>0$.}
	
	To continue,
	we introduce the semiclassical energy defined by
	\begin{equation} \label{En-energy}
		E_n[u]:=n^{s}\|u\|_{L^2}+n^{s-4}\|\Delta^2u\|_{L^2}.
	\end{equation}
	We use $E_n[\cdot]$ to measure the size of {the remainder}
	\begin{equation} \label{wn-def}
		w_n:=u_n-u^L-v_n,
	\end{equation}
	where $u_n$, $v_n$ are given as above
	and {$u^L:=e^{it\Delta^2}u_0$} is the linear wave.

	It follows from equations \eqref{eq:ode:n}, \eqref{eq:main:n} and \eqref{wn-def} that
	$w_n$ satisfies the equation
	\begin{equation}\label{eq:main:n:v}
		\left\{
		\begin{split}
			& i\partial_tw_n+\Delta^2 w_n=-\Delta^2v_n+ |v_n|^4v_n-|v_n+u^L+w_n|^4(v_n+u^L+w_n)=:-\Delta^2v_n+G_n,\\
			& w_n(0)=0,
		\end{split}
		\right.
	\end{equation}
	where
	\begin{align} \label{Gn-def}
		G_n :=  |v_n|^4v_n-|v_n+u^L+w_n|^4(v_n+u^L+w_n).
	\end{align}
	
	Using the basic energy inequality for the equation of Schr\"odinger type we get
	\begin{equation}\label{eq:energy:w:n}
		\left|\frac{d}{dt}E_n[w_n](t)\right|\leq n^{s}\|-\Delta^2v_n+G_n\|_{L^2}+n^{s-4}\|-\Delta^4v_n+\Delta^2G_n\|_{L^2}
	\end{equation}
	
	In order to control the right-hand-side of \eqref{eq:energy:w:n}, we first give the following lemma.
	
	\begin{lemma} (Control of $v_n$)   \label{lem:con:vn}
		For every $1\leq j\leq 4$, every multi-index $\alpha\in\mathbb{N}^d$ 
		and any $p\in [2,\infty]$, 
		we have that for any $t\in [0,t_n]$,  
		\begin{equation} \label{p-vnj-L2}
			\left\|\partial^\alpha \left(v^j_n\right)\right\|_{L^p}
			\lesssim \left(\kappa_n n^{q_1}\right)^{j}\left(\log n\right)^{|\alpha|\delta_2}n^{|\alpha|-\frac{d}{p}}.
		\end{equation}
		As a consequence, for any $t\in [0,t_n]$, we have
		\begin{align}
			\|\Delta^2v_n(t,\cdot)\|_{L^2} 
			\lesssim& \kappa_nn^{q_1}\left(\log n\right)^{4\delta_2}n^{4-\frac{d}{2}}, \label{Delta2vn} \\
			\|\Delta^4v_n(t,\cdot)\|_{L^2} 
			\lesssim& \kappa_nn^{q_1}\left(\log n\right)^{8\delta_2}n^{8-\frac{d}{2}}.  \label{Delta4vn} 
		\end{align} 
		{Here all the implicit constants are independent of $n$.} 
	\end{lemma}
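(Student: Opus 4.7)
The strategy is to exploit the fact that, because $V(t)=e^{it}$ satisfies $|V|\equiv 1$, the nonlinear ODE profile $v_n$ admits the completely explicit representation
\begin{equation*}
v_n(t,x) = \kappa_n n^{q_1} \phi(nx) \exp\!\bigl( i t (\kappa_n n^{q_1})^{4} |\phi(nx)|^{4} \bigr),
\end{equation*}
and hence, for $1\leq j\leq 4$,
\begin{equation*}
v_n^j(t,x) = (\kappa_n n^{q_1})^{j}\, \phi(nx)^{j}\, \exp\!\bigl( i j t (\kappa_n n^{q_1})^{4} |\phi(nx)|^{4} \bigr).
\end{equation*}
Thus $v_n^j$ factors as an amplitude supported in a set of diameter $O(n^{-1})$ times a unimodular phase, and proving \eqref{p-vnj-L2} reduces to a Leibniz/Fa\`a di Bruno expansion combined with the scaling of $L^p$.

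The key quantitative input is the restriction $t\in [0,t_n]$ together with the definition of $t_n$ in \eqref{time}, which ensures that
\begin{equation*}
t(\kappa_n n^{q_1})^{4} \leq t_n (\kappa_n n^{q_1})^{4} = (\log n)^{\delta_2}.
\end{equation*}
Therefore, every derivative $\partial_{x_k}$ that hits the phase produces a factor of the form $ij t (\kappa_n n^{q_1})^{4} \cdot n\, \partial_{x_k}(|\phi|^{4})(nx)$, of size $O\bigl(n (\log n)^{\delta_2}\bigr)$, while every derivative hitting the amplitude $\phi(nx)^{j}$ produces a factor $O(n)$, since $\phi$ is smooth and compactly supported. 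After applying $\partial^\alpha$ and expanding by Leibniz, one gets a finite sum (depending only on $|\alpha|$ and $j$) of terms, each uniformly bounded by $(\kappa_n n^{q_1})^{j}\, n^{|\alpha|} (\log n)^{|\alpha|\delta_2}$ times an amplitude supported in a set of Lebesgue measure $O(n^{-d})$. Taking $L^p$ norms and using $\|f(n\cdot)\|_{L^p}\lesssim n^{-d/p}\|f\|_{L^p}$ then yields \eqref{p-vnj-L2}.

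The two consequences \eqref{Delta2vn} and \eqref{Delta4vn} follow simply by specializing \eqref{p-vnj-L2} to $j=1$, $p=2$, and $|\alpha|=4$ or $|\alpha|=8$ respectively (writing $\Delta^2$ and $\Delta^4$ as linear combinations of fourth- and eighth-order mixed partials). The only mild obstacle is the bookkeeping in the Fa\`a di Bruno expansion when many derivatives fall on the phase $\exp(i j t (\kappa_n n^{q_1})^{4}|\phi(nx)|^{4})$, which produces chains of products of derivatives of $|\phi|^4$; however, since $\phi$ is fixed and smooth, each such product is $O(1)$ uniformly in $n$, and the number of such terms depends only on $|\alpha|$. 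Hence all combinatorial constants are absorbed into the implicit constant, as asserted.
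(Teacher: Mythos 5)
Your proof is correct and follows essentially the same route as the paper: both use the explicit formula $v_n=\psi_n e^{it|\psi_n|^4}$, the key observation that $t(\kappa_n n^{q_1})^4\le(\log n)^{\delta_2}$ for $t\in[0,t_n]$ so each derivative of the phase costs at most $n(\log n)^{\delta_2}$, a Leibniz/chain-rule expansion, and the scaling $\|f(n\cdot)\|_{L^p}\lesssim n^{-d/p}$. The only cosmetic difference is that the paper first proves a pointwise envelope bound for $\partial^{\beta}v_n$ and then handles powers $v_n^j$ by Hölder, whereas you expand $v_n^j$ directly via Faà di Bruno; the estimates obtained are identical.
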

	
	\begin{proof}
		Let $\beta$ be a multi-index that is arbitrarily given. Using \eqref{vn-V} 
		{and \eqref{time} we can find some positive spatial Schwartz function $v^\beta$ such that 
			\begin{align}
				\left|\partial^\beta v_n(x) \right| \lesssim \kappa_nn^{q_1} 
				\left(\log n\right)^{|\beta|\delta_2}n^{|\beta|}
				v^\beta(nx), 
				\label{eq:12230}
			\end{align}
			holds for all $t\in[0,t_n]$. Taking $L^p$-norm on both sides, we obtain 
			\begin{align}
				\left\|\partial^\beta v_n(t,\cdot) \right\|_{L^p} &\lesssim \kappa_nn^{q_1} \left(\log n\right)^{|\beta|\delta_2}n^{|\beta|-\frac{d}{p}}.\label{eq:12231}
			\end{align}
			By proper linear combinations, from these we can derive \eqref{Delta2vn}  and  \eqref{Delta4vn}. } 
		
		Let $\alpha$ be given in the statement. We use Leibniz's rule to expand
		{\begin{align}
				\left\|\partial^\alpha \left(v^j_n\right)\right\|_{L^p}
				&\lesssim  \sum_{\substack{\alpha_1+\cdots+ \alpha_j=\alpha\\ |\alpha_1| \geq\cdots\geq |\alpha_j|}}
				\left\|\partial^{\alpha_1}v_n  \cdots\partial^{\alpha_j}v_n\right\|_{L^p}. \notag  
			\end{align}
			Using H\"{o}lder's inequality and \eqref{eq:12231}, we can further bound
			\begin{align}
				\left\|\partial^\alpha \left(v^j_n\right)\right\|_{L^p}&\leq \sum_{\substack{\alpha_1+\cdots+ \alpha_j=\alpha\\ |\alpha_1| \geq\cdots\geq |\alpha_j|}}  \|\partial^{\alpha_1}v_n\|_{L^p} \prod_{l=2}^{j}\left\|\partial^{\alpha_l}v_n\right\|_{L^\infty} \notag  \\
				&\lesssim \sum_{\substack{\alpha_1+\cdots+ \alpha_j=\alpha\\ |\alpha_1| \geq\cdots\geq |\alpha_j|}}
				\left(\kappa_nn^{q_1}(\log n)^{|\alpha_1|\delta_2}n^{|\alpha_1|-\frac{d}{p}}\right) \prod_{l=2}^{j}
				\left(\kappa_nn^{q_1}\left(\log n\right)^{|\alpha_l|\delta_2}n^{|\alpha_l|}\right) \notag  \\
				&\lesssim \left(\kappa_nn^{q_1}\right)^j(\log n)^{|\alpha|\delta_2}n^{|\alpha|-\frac{d}{p}}.\nonumber
		\end{align}} 
		This proves \eqref{p-vnj-L2} and hence completes the proof.
	\end{proof}
	
	Putting \eqref{eq:energy:w:n}, \eqref{Delta2vn} and \eqref{Delta4vn} together we obtain
	\begin{equation}\label{eq:energy:w:n:r}
		\left|\frac{d}{dt}E_n[w_n](t)\right| 
		\lesssim \kappa_nn^{q_1}\left(\log n\right)^{8\delta_2}n^{4+s-\frac{d}{2}}+ n^{s}\|G_n\|_{L^2}+n^{s-4}\|\Delta^2G_n\|_{L^2}
	\end{equation}
	
	We turn to control $\|G_n\|_{L^2}$ and 
	$\|\Delta^2 G_n\|_{L^2}$, as is done 
	in Propositions \ref{Lem-Gn-L2} and \ref{Lem-D2Gn-L2} below. 
	Before stating the main results, let us first present the estimate 
	for the linear wave which will be used frequently later.

	\begin{lemma} (Control of $u^L$) \label{Lem-uL-L9-pnu}
		For any $p\in[2,\infty]$, any positive integer $j$ and any multi-index $\alpha\in\mathbb{N}^d$, we have that for all $t\geq 0$, 
		\begin{align*} 
			\left\|\partial^\alpha \left[\left(u^L(t)\right)^j\right] \right\|_{L^{p}} \lesssim 1, 
		\end{align*} 
		where the implicit constant depends on $\alpha,j$ and $p$. 
	\end{lemma}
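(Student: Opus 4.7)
The plan is to reduce the estimate, via Leibniz's rule and Hölder's inequality, to uniform--in--time $L^p$ bounds on derivatives of the single linear wave $u^L(t) = e^{it\Delta^2} u_0$, and then to establish such bounds using the fact that $u_0$ is Schwartz (hence so is any $\partial^\beta u_0$), combined with the unitarity of $e^{it\Delta^2}$ on $L^2$ and an $L^1$--based argument at the endpoint $p = \infty$.

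First, I will expand $\partial^\alpha[(u^L)^j]$ through Leibniz's rule to obtain a finite sum of products of the form $\prod_{i=1}^{j} \partial^{\beta_i} u^L$ with $\sum_i \beta_i = \alpha$. Applying Hölder's inequality by placing one factor in $L^p$ and the remaining $j-1$ factors in $L^\infty$, the task reduces to showing that for every multi-index $\beta$,
\begin{equation*}
\sup_{t \geq 0} \bigl\| \partial^{\beta} u^L(t) \bigr\|_{L^q} \lesssim 1, \qquad q \in \{p, \infty\}.
\end{equation*}
Since derivatives commute with the propagator, $\partial^{\beta} u^L(t) = e^{it\Delta^2}(\partial^\beta u_0)$, so it suffices to bound $\|e^{it\Delta^2} f\|_{L^q}$ uniformly in $t$ whenever $f$ is Schwartz.

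For $q = 2$, the bound is immediate from the unitarity of $e^{it\Delta^2}$ on $L^2$. For $q = \infty$, I write $e^{it\Delta^2} f = \mathcal{F}^{-1}\bigl[e^{it|\xi|^4} \hat f(\xi)\bigr]$ and apply Hausdorff--Young (more concretely, $\|\mathcal{F}^{-1} g\|_{L^\infty} \leq \|g\|_{L^1}$) to obtain
\begin{equation*}
\bigl\| e^{it\Delta^2} f \bigr\|_{L^\infty} \leq \bigl\| e^{it|\xi|^4} \hat f \bigr\|_{L^1} = \|\hat f\|_{L^1},
\end{equation*}
which is finite and independent of $t$ since $f$ is Schwartz. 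For $q \in (2,\infty)$, Riesz--Thorin interpolation between these two endpoints yields the uniform bound with an implicit constant depending only on finitely many Schwartz seminorms of $\partial^\beta u_0$.

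There is essentially no technical obstacle here beyond keeping track of the dependence on $\alpha$, $j$, and $p$; every ingredient (unitarity, Hausdorff--Young, interpolation, Leibniz) is standard, and all constants produced are manifestly independent of $t$ since $u_0$ is Schwartz. The only minor point worth emphasizing in writing is that the argument is insensitive to whether some factors appearing in $(u^L)^j$ are complex conjugated, because $\overline{u^L}$ satisfies the analogous Schwartz bounds (via $\overline{e^{it\Delta^2} u_0} = e^{-it\Delta^2} \overline{u_0}$), so the estimates carry over verbatim to any mixed product of $u^L$ and $\overline{u^L}$ factors that may appear later in the control of $G_n$.
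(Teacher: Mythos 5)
Your proposal is correct. The reduction is the same as the paper's: Leibniz's rule (plus H\"older, or induction in the paper) brings everything down to a uniform-in-time $L^q$ bound on a single factor $\partial^\beta u^L(t)=e^{it\Delta^2}\partial^\beta u_0$ with $u_0$ Schwartz. Where you differ is in how that single-factor bound is closed: the paper does it in one line via the Sobolev embedding $H^d\hookrightarrow L^p$ together with the unitarity of $e^{it\Delta^2}$ on $H^d$, namely $\|\partial^\beta u^L\|_{L^p}\lesssim\|\partial^\beta u^L\|_{H^d}=\|\partial^\beta u_0\|_{H^d}\lesssim 1$, which covers all $p\in[2,\infty]$ at once; you instead work on the Fourier side, using Plancherel at $p=2$, the bound $\|e^{it\Delta^2}f\|_{L^\infty}\le\|\hat f\|_{L^1}$ at $p=\infty$ (the multiplier $e^{it|\xi|^4}$ has modulus one), and interpolation of Lebesgue norms in between. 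Both routes are standard and give constants depending only on finitely many seminorms of $u_0$, independent of $t$; the paper's is marginally shorter, while yours makes explicit that only $\hat f\in L^1$ and $f\in L^2$ are actually needed. Your closing remark about conjugated factors is a sensible addition (and is implicitly used in the paper when estimating $G_n$), since $\overline{u^L}=e^{-it\Delta^2}\overline{u_0}$ obeys identical bounds.
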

	
	\begin{proof}Observe that by using Leibniz's rule and induction argument, it suffices to show
		\begin{align} \label{uL-L9-pnu}
			\left\|\partial^\alpha u^L(t) \right\|_{L^{p}} \lesssim_{p,\alpha} 1.
		\end{align} 
		Since $u_0$ is a Schwartz function, by the Sobolev embedding $H^d \hookrightarrow L^{p}$ and the unitarity of $e^{it\Delta^2}$ on $H^d$, we have that 
		\begin{align} 
			\|\partial^{\alpha}u^L\|_{L^{p}}
			\lesssim \|\partial^{\alpha}u^L\|_{H^d}
			= \|\partial^{\alpha}u_0\|_{H^d}
			\lesssim 1. 
		\end{align}
		This gives \eqref{uL-L9-pnu} and finishes the proof of Lemma \ref{Lem-uL-L9-pnu}.
	\end{proof}
	
	We first control $\|G_n\|_{L^2}$ as follows.
	\begin{proposition} (Estimate of $\|G_n\|_{L^2}$) \label{Lem-Gn-L2}  We have that for all $n\geq 1$ and $t\in[0,t_n]$, 
		\begin{equation}  \label{Gn-L2-En}
			n^{s}\|G_n\|_{L^2}\lesssim n^{s}(1+\left(\kappa_nn^{q_1}\right)^4n^{-\frac{d}{2}})
			+(\kappa_nn^{q_1})^4 E_n[w_n]+ n^{-4s+2d} E_n[w_n]^5.
		\end{equation}
	\end{proposition}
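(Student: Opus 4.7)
The approach is to begin with a pointwise estimate on $G_n$. Writing $F(z)=|z|^4z$, the complex derivatives $F_z$ and $F_{\bar z}$ are polynomials in $(z,\bar z)$ of homogeneity four, so $|F_z(z)|, |F_{\bar z}(z)|\lesssim |z|^4$. A fundamental-theorem-of-calculus argument along the segment from $v_n$ to $v_n+h$ (with $h := u^L+w_n$) then gives
\[
|G_n| = \bigl|F(v_n+h)-F(v_n)\bigr|\lesssim (|v_n|+|h|)^4|h|\lesssim |v_n|^4|h|+|h|^5.
\]
Combined with the elementary bound $(|u^L|+|w_n|)^5\lesssim |u^L|^5+|w_n|^5$, this reduces matters to controlling the $L^2$ norms of the four pieces $v_n^4 u^L$, $v_n^4 w_n$, $(u^L)^5$, and $w_n^5$.

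For the first two pieces I split the factor of $h$ and use H\"older's inequality. The product $v_n^4 u^L$ is bounded in $L^2$ by $\|v_n^4\|_{L^2}\,\|u^L\|_{L^\infty}\lesssim (\kappa_n n^{q_1})^4 n^{-d/2}$, invoking Lemma \ref{lem:con:vn} with $j=4$, $\alpha=0$, $p=2$ together with Lemma \ref{Lem-uL-L9-pnu}; this produces the term $n^s(\kappa_n n^{q_1})^4 n^{-d/2}$ after multiplying by $n^s$. The product $v_n^4 w_n$ is bounded by $\|v_n\|_{L^\infty}^4\|w_n\|_{L^2}\lesssim(\kappa_n n^{q_1})^4\,n^{-s}E_n[w_n]$, where the last factor comes directly from the definition of $E_n$; multiplying by $n^s$ yields $(\kappa_n n^{q_1})^4 E_n[w_n]$. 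For the $h^5$ contribution, $\|(u^L)^5\|_{L^2}\lesssim 1$ is immediate from Lemma \ref{Lem-uL-L9-pnu}, while $\|w_n^5\|_{L^2}=\|w_n\|_{L^{10}}^5$ is controlled by the Gagliardo--Nirenberg interpolation
\[
\|w_n\|_{L^{10}}\lesssim \|w_n\|_{L^2}^{1-d/10}\,\|\Delta^2 w_n\|_{L^2}^{d/10},
\]
valid for $d\leq 10$. Inserting $\|w_n\|_{L^2}\leq n^{-s}E_n[w_n]$ and $\|\Delta^2 w_n\|_{L^2}\leq n^{4-s}E_n[w_n]$ gives $\|w_n\|_{L^{10}}\lesssim n^{-s+2d/5}E_n[w_n]$, hence $\|w_n^5\|_{L^2}\lesssim n^{-5s+2d}E_n[w_n]^5$, which becomes $n^{-4s+2d}E_n[w_n]^5$ after multiplying by $n^s$.

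Summing the four contributions yields exactly the bound stated in the proposition. The main obstacle I anticipate is combinatorial rather than analytical: one must be confident that every intermediate cross term in the multinomial expansion of $F(v_n+h)-F(v_n)$ is dominated by one of the four extremal pieces above. The crude pointwise inequality $|F(v_n+h)-F(v_n)|\lesssim(|v_n|^4+|h|^4)|h|$ takes care of this automatically, and the fact that $\kappa_n n^{q_1}\to\infty$ for large $n$ lets us freely absorb lower powers of $\kappa_n n^{q_1}$ into $(\kappa_n n^{q_1})^4$. The only quantitatively delicate step is the biharmonic-level Gagliardo--Nirenberg inequality, which is comfortably available in the regime $d\leq 5<8$ but must be applied with the exact exponent $\theta=d/10$ in order to recover the sharp weight $n^{-5s+2d}$ on the $E_n[w_n]^5$ term.
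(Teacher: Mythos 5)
Your proposal is correct and follows essentially the same route as the paper: the same pointwise decomposition $|G_n|\lesssim |u^L|^5+|w_n|^5+|v_n|^4|u^L|+|v_n|^4|w_n|$, Lemma \ref{Lem-uL-L9-pnu} and Lemma \ref{lem:con:vn} for the linear-wave and $v_n$ factors, and the biharmonic Gagliardo--Nirenberg interpolation with exponent $d/10$ for the $\|w_n\|_{L^{10}}^5$ term. The only (harmless) deviation is the cross term $v_n^4w_n$, which you bound via H\"older as $\|v_n\|_{L^\infty}^4\|w_n\|_{L^2}$ instead of the paper's $\|v_n^4\|_{L^4}\|w_n\|_{L^4}$ with an extra interpolation; both give $(\kappa_n n^{q_1})^4 n^{-s}E_n[w_n]$.
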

	
	\begin{proof}
		It follows from \eqref{Gn-def} that
		\begin{equation}\label{eq:2100:*}
			|G_n|\lesssim |u^L|^5+ |w_n|^5+|w_n||v_n|^4+|u^L||v_n|^4. 
		\end{equation}
		Taking $L^2$-norm on both sides and using Minkowski's inequality, we obtain
		\begin{equation} \label{Gn-L2}
			\|G_n\|_{L^2}\lesssim \|u^L\|^5_{L^{10}}+\|w_n\|^5_{L^{10}}
			+\|u^Lv^4_n\|_{L^2}+\|w_nv^4_n\|_{L^2}.
		\end{equation}
		In the following, we control the right hand side of this inequality term by term.
		
		For the first term, we apply Lemma \ref{Lem-uL-L9-pnu} with $j=1$ to find 
		\begin{align} \label{uL-L9}
			\|u^L\|^5_{L^{10}}
			\lesssim 1.
		\end{align}
		
		For the second term $\|w_n\|^5_{L^{10}}$, since $d\leq 5$, we first use the Gagliardo-Nirenberg inequality and then \eqref{En-energy} to obtain
		\begin{eqnarray*}
			\|w_n\|_{L^{10}}
			\lesssim   \|w_n\|^{1-\frac{d}{10}}_{L^2}\|\Delta^2w_n\|^{\frac{d}{10}}_{L^2}
			\leq n^{-s}n^{\frac{2d}{5}}E_n[w_n].
		\end{eqnarray*}
		Consequently we have
		\begin{equation} \label{wn-L10}
			\|w_n\|^5_{L^{10}}\lesssim n^{-5s+2d} E_n[w_n]^5.
		\end{equation}
		
		For the third term $\|u^Lv^4_n\|_{L^2}$, we first use H\"{o}lder's inequality to bound
		{\begin{eqnarray} \label{eq:12234}
				\|u^Lv^4_n\|_{L^2}
				\leq \|u^L\|_{L^{\infty}} \|v_n^4\|_{L^2}.
		\end{eqnarray}} 
		On the one hand, we use Lemma \ref{Lem-uL-L9-pnu} with $p=\infty,j=1$ and $|\alpha|=0$ to obtain
		\begin{equation}\label{eq:12232}
			\|u^L\|_{L^\infty}\lesssim 1.
		\end{equation}
		On the other hand, we apply \eqref{p-vnj-L2} with $p=2, |\alpha|=0$ and $j=4$ to get 
		{\begin{equation}\label{eq:12233}
				\|v_n^4\|_{L^2}
				\lesssim \left(\kappa_n n^{q_1}\right)^{4}n^{-\frac{d}{2}}.
		\end{equation} } 
		Inserting \eqref{eq:12232} and \eqref{eq:12233} into \eqref{eq:12234}, we obtain
		\begin{equation}\label{uLvn4-L2}
			\|u^Lv^4_n\|_{L^2}\lesssim \left(\kappa_n n^{q_1}\right)^{4}n^{-\frac{d}{2}}
		\end{equation}

		For the last term $\|w_nv^4_n\|_{L^2}$, since $d\leq 5$, we use Gagliardo-Nirenberg's inequality, \eqref{En-energy} and Lemma \ref{lem:con:vn} to compute 
		\begin{align} \label{wnvn4-L2}
			\|w_nv^4_n\|_{L^2}
			\lesssim& \|w_n\|_{L^4} \|v^4_n\|_{L^4}   \nonumber \\
			\lesssim& \|w_n\|^{1-\frac{d}{4}\left(\frac{1}{2}-\frac{1}{4}\right)}_{L^2}\|\Delta^2w_n\|^{\frac{d}{4}\left(\frac{1}{2}-\frac{1}{4}\right)}_{L^2}
			\|v^4_n\|_{L^4} \nonumber \\
			\lesssim&  (\kappa_nn^{q_1})^4n^{-s}E_n[w_n].
		\end{align}
		
		{Finally, plugging \eqref{uL-L9}, \eqref{wn-L10}, \eqref{uLvn4-L2} and \eqref{wnvn4-L2} into \eqref{Gn-L2}}, we obtain
		\eqref{Gn-L2-En}. This completes the proof.
	\end{proof}
	
	{We turn to control $\|\Delta^2 G_n\|_{L^2}$
		on the right-hand-side of \eqref{eq:energy:w:n:r}}. 
	\begin{proposition} (Estimate of $\|\Delta^2 G_n\|_{L^2}$)   \label{Lem-D2Gn-L2}
		For all $n\geq 1$ and $t\in[0,t_n]$, we have
		\begin{align} \label{eq:2206}
			n^{s-4}\|\Delta^2G_n\|_{L^2}
			\lesssim& n^{s-4}\left(1+\left(\kappa_n n^{q_1}\right)^{4}\left(\log n\right)^{4\delta_2}n^{4-\frac{d}{2}}\right)  \notag \\ 
			&+ \left(\log n\right)^{4\delta_2}  \sum\limits_{j=1}^5
			\left(\kappa_n n^{q_1}\right)^{5-j}
			n^{(j-1)(-s+\frac d2)} E_n[w_n]^j.
		\end{align}
	\end{proposition}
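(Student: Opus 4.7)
The plan is to extend the strategy of Proposition \ref{Lem-Gn-L2}, now carefully distributing the four derivatives coming from $\Delta^2$. First I would expand $G_n$ from \eqref{Gn-def} as a polynomial of total degree five in $u^L,\bar{u}^L,w_n,\bar{w}_n,v_n,\bar{v}_n$. Cancellation of the pure $|v_n|^4 v_n$ piece forces every monomial to contain at least one factor from $\{u^L,\bar{u}^L,w_n,\bar{w}_n\}$. Indexing such a monomial schematically by $(a,b,c)$ with $a+b+c=5$ and $a+b\geq 1$, where $a,b,c$ count the powers of $u^L$-type, $w_n$-type and $v_n$-type factors (conjugates included), an application of $\Delta^2$ together with Leibniz produces a finite sum of triple products $\partial^{\alpha_1}((u^L)^a)\,\partial^{\alpha_2}(w_n^b)\,\partial^{\alpha_3}(v_n^c)$ with $|\alpha_1|+|\alpha_2|+|\alpha_3|=4$.

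For each such triple I would apply H\"older's inequality (with exponents $(\infty,2,\infty)$ when $b\geq 1$, and $(\infty,2)$ when $b=0$) and then control each factor separately. Lemma \ref{Lem-uL-L9-pnu} yields $\|\partial^{\alpha_1}((u^L)^a)\|_{L^\infty}\lesssim 1$, while estimate \eqref{p-vnj-L2} controls $\|\partial^{\alpha_3}(v_n^c)\|$ by $(\kappa_n n^{q_1})^c(\log n)^{|\alpha_3|\delta_2}n^{|\alpha_3|-d/p}$ for $p=\infty$ or $p=2$ as appropriate. For the $w_n$-block, I would Leibniz-expand $\partial^{\alpha_2}(w_n^b)=\sum \prod_{i=1}^{b}\partial^{\beta_i}w_n$ with $\sum|\beta_i|=|\alpha_2|$, then apply H\"older adaptively by placing the most-differentiated subfactor in $L^2$ and the remaining subfactors in suitable $L^{q_i}$-spaces, and invoke Gagliardo-Nirenberg interpolation combined with the bounds $\|w_n\|_{L^2}\leq n^{-s}E_n[w_n]$ and $\|\Delta^2 w_n\|_{L^2}\leq n^{4-s}E_n[w_n]$. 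A short case analysis (in which all Gagliardo-Nirenberg exponents lie in $[0,1]$ thanks to $d\leq 5$) yields the uniform estimate
\begin{equation*}
\|\partial^{\alpha_2}(w_n^b)\|_{L^2}\lesssim n^{-sb+|\alpha_2|+(b-1)d/2}\,E_n[w_n]^b, \qquad b\geq 1.
\end{equation*}

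Multiplying the three factor bounds, using $|\alpha_1|+|\alpha_2|+|\alpha_3|=4$ and noting that the largest contribution comes from $|\alpha_1|=0$, a routine arithmetic check shows that each $(a,b,c)$-monomial contributes to $n^{s-4}\|\Delta^2 G_n\|_{L^2}$ at most $(\kappa_n n^{q_1})^c(\log n)^{4\delta_2}n^{(b-1)(d/2-s)}E_n[w_n]^b$ when $b\geq 1$, and at most $n^{s-4}$ (from $(a,c)=(5,0)$) plus $n^{s-4}(\kappa_n n^{q_1})^c(\log n)^{4\delta_2}n^{4-d/2}$ (from $a+c=5$, $c\geq 1$) when $b=0$. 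Since $\kappa_n n^{q_1}\to\infty$, the $b=0$ contribution is dominated by $c=4$, giving the first line of \eqref{eq:2206}; while for $b=j\geq 1$, the sum over admissible pairs $(a,c)$ with $a+c=5-j$ is dominated by $a=0$, $c=5-j$, producing exactly the summand $(\kappa_n n^{q_1})^{5-j}(\log n)^{4\delta_2}n^{(j-1)(d/2-s)}E_n[w_n]^j$ appearing in the claim. The chief subtlety is the adaptive Gagliardo-Nirenberg on $w_n^b$: a uniform $L^{bp}$-interpolation across all $b$ subfactors would break down whenever a single subfactor carries all four derivatives (the interpolation parameter would exceed $1$), so isolating the most-differentiated subfactor in $L^2$ is essential and this is where the dimensional restriction $d\leq 5$ enters.
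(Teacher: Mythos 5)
Your overall scheme is the paper's: expand $G_n$ into quintic monomials graded by the power $b$ of $w_n$, put the $u^L$-block and the $v_n$-block in $L^\infty$ (Lemma \ref{Lem-uL-L9-pnu} and \eqref{p-vnj-L2}), put the $w_n$-block in $L^2$, and close with $\left\|\partial^{\alpha}\left(w_n^b\right)\right\|_{L^2}\lesssim n^{|\alpha|-bs+\frac d2(b-1)}E_n[w_n]^b$, which is exactly the paper's Lemma \ref{lem:dwn}; the paper merely packages the mixed $v_n$--$w_n$ products as Lemma \ref{lem:dvnwn}, whose proof is precisely your $(\infty,2)$ H\"older split. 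Your exponent bookkeeping (multiplying by $n^{s-4}$, using $|\alpha_1|+|\alpha_2|+|\alpha_3|=4$, and taking the worst cases $c=5-j$ for $b=j\ge1$ and $c=4$ for $b=0$) reproduces \eqref{eq:2206} correctly.

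The step that would fail as you describe it is your justification of the $w_n$-block estimate. Your recipe --- Leibniz on $w_n^b$, then ``placing the most-differentiated subfactor in $L^2$ and the remaining subfactors in suitable $L^{q_i}$-spaces'' --- breaks down for the balanced split in which the four derivatives fall as two second-order derivatives on two distinct copies of $w_n$ (unavoidable already for $b=2$, i.e.\ the $w_n^2$ terms): if $\partial^{\beta_1}w_n$ with $|\beta_1|=2$ is placed in $L^2$, the other factor $\partial^{\beta_2}w_n$ with $|\beta_2|=2$ must go to $L^\infty$, and the Gagliardo--Nirenberg exponent for $\|\partial^{\beta_2}w_n\|_{L^\infty}$ in terms of $\|w_n\|_{L^2}$ and $\|\Delta^2 w_n\|_{L^2}$ is $\frac{2+d/2}{4}$, which exceeds $1$ when $d=5$. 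So your parenthetical claim that all interpolation exponents lie in $[0,1]$ is false under this recipe, precisely in part of the range the proposition must cover; the fix, which is what the paper does in case $(iv)$ of the proof of Lemma \ref{lem:dwn}, is to balance the integrability instead and put both second-derivative factors in $L^4$, where the exponent is $\frac d{16}+\frac12\le1$. Relatedly, your closing diagnosis is off: isolating the top-derivative factor in $L^2$ is the right move only when one factor carries all (or essentially all) four derivatives, and the dimensional restriction is not forced there --- these Gagliardo--Nirenberg steps tolerate $d\le 8$ --- while $3\le d\le 5$ is needed mainly for the existence of the smooth solutions $u_n$ on a long enough time interval (energy-subcritical defocusing global well-posedness), as the paper's remark after Theorem \ref{thm:main22} indicates.
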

	
	In order to prove Proposition \ref{Lem-D2Gn-L2}, 
	{we will not only use Lemmas \ref{lem:con:vn} and \ref{Lem-uL-L9-pnu}} to control $v_n$ and the linear wave, but also will prove the following three Lemmas, {which are used to control  $w_n$, 
		the interactions between $u^L$ and  $w_n$, 
		and the interactions between $v_n$ and $w_n$.  
	}

	\begin{lemma} (Control of $w_n$)   \label{lem:dwn}
		Fix $1\leq j\leq 5$ and a multi-index $\alpha\in\mathbb{N}^d$ with $0\leq |\alpha| \leq 4$. Then for any $n$ and any $t\in[0,t_n]$ we have
		\begin{equation}  \label{p-wnj-L2}
			\left\|\partial^\alpha\left(w_n^j\right)\right\|_{L^2}\lesssim n^{|\alpha|-js+\frac{d}{2}(j-1)} E_n[w_n]^j.
		\end{equation}
	\end{lemma}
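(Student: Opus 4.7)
\textbf{Proof proposal for Lemma \ref{lem:dwn}.}
The plan is a standard Leibniz-H\"older-Gagliardo-Nirenberg chain, with the only real bookkeeping being the choice of H\"older exponents. First, I expand
\begin{equation*}
  \partial^\alpha(w_n^j) = \sum_{\substack{\alpha_1+\cdots+\alpha_j=\alpha\\|\alpha_1|\geq \cdots \geq |\alpha_j|}} C_{\alpha_1,\dots,\alpha_j}\, \partial^{\alpha_1}w_n\cdots \partial^{\alpha_j}w_n
\end{equation*}
and, for each multi-index splitting, estimate the $L^2$-norm of a single summand by H\"older with exponents $(p_1,\dots,p_j)$ satisfying $p_l\in[2,\infty]$ and $\sum 1/p_l=1/2$, namely
\begin{equation*}
  \|\partial^{\alpha_1}w_n\cdots \partial^{\alpha_j}w_n\|_{L^2} \leq \prod_{l=1}^j \|\partial^{\alpha_l}w_n\|_{L^{p_l}}.
\end{equation*}

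Next, I apply a Gagliardo-Nirenberg inequality between $L^2$ and $\dot{H}^4$ (endowed with the norm $\|\Delta^2 \cdot\|_{L^2}$) to each factor:
\begin{equation*}
  \|\partial^{\alpha_l}w_n\|_{L^{p_l}} \lesssim \|w_n\|_{L^2}^{1-\theta_l}\, \|\Delta^2 w_n\|_{L^2}^{\theta_l}, \qquad \theta_l := \frac{|\alpha_l|}{4}+\frac{d}{8}-\frac{d}{4p_l}.
\end{equation*}
Invoking the definition $E_n[w_n]=n^s\|w_n\|_{L^2}+n^{s-4}\|\Delta^2 w_n\|_{L^2}$ in \eqref{En-energy}, each factor on the right is bounded by $n^{-s+4\theta_l}E_n[w_n]=n^{-s+|\alpha_l|+d/2-d/p_l}E_n[w_n]$. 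Multiplying over $l=1,\dots,j$ and using $\sum |\alpha_l|=|\alpha|$ and $\sum 1/p_l=1/2$, the exponent of $n$ telescopes to $|\alpha|-js+d(j-1)/2$, yielding the stated bound once I sum over the finitely many splittings (which only generates a combinatorial constant).

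The one step requiring care is checking that admissible H\"older exponents actually exist. I need $p_l\in[2,\infty]$ together with $\theta_l\in[0,1]$, that is,
\begin{equation*}
  a_l := \frac{1}{p_l}\in \bigl[\max\{0,\tfrac{|\alpha_l|-4}{d}+\tfrac 12\},\ \tfrac 12\bigr], \qquad \sum_{l=1}^j a_l = \frac 12.
\end{equation*}
The lower bound $\sum_l \max\{0,(|\alpha_l|-4)/d+1/2\}\leq 1/2$ must be verified. Since each $|\alpha_l|\leq|\alpha|\leq 4$, each summand is at most $1/2$; the extremal configuration, with $j=5$, $|\alpha|=4$, $d=5$, and all derivatives concentrated on a single factor ($|\alpha_1|=4$, $|\alpha_l|=0$ for $l\geq 2$), produces precisely $1/2$, and all other configurations in the regime $1\leq j\leq 5$, $|\alpha|\leq 4$, $3\leq d\leq 5$ give strictly less. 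Thus admissible exponents always exist. This is the main--and essentially the only--obstacle; once it is cleared, the computation above completes the proof.
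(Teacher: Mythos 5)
Your argument is correct and rests on the same toolkit as the paper's proof (Leibniz rule, H\"older, Gagliardo--Nirenberg interpolation between $L^2$ and $\|\Delta^2\cdot\|_{L^2}$, then conversion to $E_n[w_n]$ via \eqref{En-energy}); the difference is organizational. The paper runs an explicit five-case analysis according to how the derivatives $\alpha_1,\dots,\alpha_j$ distribute (cases $(i)$--$(v)$ in its proof), choosing concrete H\"older exponents in each case, whereas you give a single uniform computation in which the exponent of $n$ telescopes automatically from $\sum_l|\alpha_l|=|\alpha|$ and $\sum_l 1/p_l=1/2$, reducing everything to one admissibility check $\sum_l\max\{0,(|\alpha_l|-4)/d+1/2\}\leq 1/2$. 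That check is right, and your scheme covers all $3\leq d\leq 5$, $j\leq 5$, $|\alpha|\leq 4$ at once, which is cleaner than the paper's enumeration. Two small points: (1) the extremal configuration is not only $j=5$, $d=5$ --- any splitting with $|\alpha_1|=4$ gives the sum exactly $1/2$ for every $d\in\{3,4,5\}$ and every $j$, since $\max\{0,(4-4)/d+1/2\}=1/2$ is independent of $d$; this is harmless because equality merely forces $p_1=2$, $p_l=\infty$ ($l\geq 2$), which is admissible. (2) In that forced case you hit the endpoint $\theta_1=1$, which is not a genuine Gagliardo--Nirenberg application but simply $\|\partial^{\alpha_1}w_n\|_{L^2}\leq\|\Delta^2 w_n\|_{L^2}$ by Plancherel, and the remaining factors use $\|w_n\|_{L^\infty}\lesssim\|w_n\|_{L^2}^{1-d/8}\|\Delta^2 w_n\|_{L^2}^{d/8}$, valid since $d<8$ (and non-exceptional since $\theta=d/8<1$); it is worth saying this explicitly so the endpoint does not look like an illegitimate use of the interpolation inequality.
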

	
	\begin{proof}The case $j=1$ can be proved directly by referring to \eqref{En-energy}. For $j\geq 2$, by Leibniz's rule, {we get}
		\begin{align} \label{eq:0}
			\left\|\partial^\alpha\left(w_n^j\right)\right\|_{L^2}
			&\lesssim \sum_{\substack{\alpha_1+\cdots+ \alpha_j=\alpha  \\ 
					|\alpha_1| \geq\cdots\geq |\alpha_j|}}
			\left\|\partial^{\alpha_1}w_n\cdots\partial^{\alpha_j}w_n\right\|_{L^2} 
		\end{align}
		We now estimate the right-hand-side 
		in the following four different cases. 
		
		\noindent\underline{$(i)$ The case $|\alpha_1|=|\alpha|$}. 
		We use H\"{o}lder's inequality to bound
		\begin{align}\label{eq:12260}
			\left\|\partial^{\alpha_1}w_n\cdots\partial^{\alpha_j}w_n\right\|_{L^2} \leq \left\|\partial^{\alpha_1}w_n\right\|_{L^2} \left\|w_n\right\|^{j-1}_{L^\infty}.
		\end{align}
		Using \eqref{En-energy} and interpolation arguments 
		we bound 
		\begin{equation}\label{eq:122601}
			\left\|\partial^\alpha w_n\right\|_{L^2}
			\lesssim n^{-s+|\alpha|}E_n[w_n].
		\end{equation}
		Moreover, since $d\leq 5$, we use Gagliardo-Nirenberg's inequality to obtain
		\begin{equation}\label{eq:122604}
			\left\|w_n\right\|_{L^\infty}\lesssim \|w_n\|_{L^2}^{1-\frac{d}{8}}
			\|\Delta^2 w_n\|_{L^2}^{\frac{d}{8}}\lesssim n^{-s}n^{\frac{d}{2}}E_n[w_n], 
		\end{equation}
		which implies that 
		\begin{equation}\label{eq:122602}
			\left\|w_n\right\|^{j-1}_{L^\infty}\lesssim \left(n^{-s}n^{\frac{d}{2}}E_n[w_n]\right)^{j-1}. 
		\end{equation}
		Inserting \eqref{eq:122601} and \eqref{eq:122602} into \eqref{eq:12260}, we have
		\begin{equation}\label{eq:1}
			\left\|\partial^{\alpha_1}w_n\cdots\partial^{\alpha_j}w_n\right\|_{L^2}\lesssim	n^{|\alpha|-js+\frac{d}{2}(j-1)} E_n[w_n]^j. 
		\end{equation} 
		
		\noindent\underline{$(ii)$ The case $|\alpha_1|=|\alpha|-1$ and $|\alpha_2|=1$}. Using H\"{o}lder's inequality, we get
		\begin{equation}\label{eq:12261}
			\left\|\partial^{\alpha_1}w_n\cdots\partial^{\alpha_j}w_n\right\|_{L^2}\leq \left\|\partial^{\alpha_1}w_n\right\|_{L^p} \left\|\partial^{\alpha_2}w_n\right\|_{L^{\tilde{p}}}\left\|w_n\right\|^{j-2}_{L^\infty},
		\end{equation}
		where the positive numbers $p$ and $\tilde{p}$ satisfy
		\begin{equation*}
			\frac{1}{p} = \frac{1}{2}-\frac{1}{d}\ \mathrm{and}\ \ \frac{1}{\tilde{p}} = \frac{1}{d}.
		\end{equation*}
		For the first factor on the right-hand-side of \eqref{eq:12261}, we use Gargliardo-Nirenberg's inequality to bound
		\begin{equation}\label{eq:122610}
			\left\|\partial^{\alpha_1}w_n\right\|_{L^p}\lesssim \|w_n\|^{1-\frac{d}{4}\left(\frac{1}{2}-\frac{1}{p}+\frac{|\alpha_1|}{d}\right)}_{L^2}\|\Delta^2w_n\|^{\frac{d}{4}\left(\frac{1}{2}-\frac{1}{p}+\frac{|\alpha_1|}{d}\right)}_{L^2}=\left\|w_n\right\|_{L^2}^{1-\frac{|\alpha|}{4}}\left\|\Delta^2w_n\right\|_{L^2}^{\frac{|\alpha|}{4}} 
			\lesssim n^{{|\alpha|}-s}E_n[w_n].
		\end{equation}
		By the same argument, we bound the second factor on the right-hand-side of \eqref{eq:12261} as follows
		\begin{equation}\label{eq:122611}
			\left\|\partial^{\alpha_2}w_n\right\|_{L^{\tilde{p}}}\lesssim \|w_n\|^{1-\frac{d}{8}}_{L^2}\|\Delta^2w_n\|^{\frac{d}{8}}_{L^2}\lesssim n^{\frac{d}{2}-s}E_n[w_n].
		\end{equation}
		Similarly to \eqref{eq:122602}, it follows from \eqref{eq:122604}
		\begin{equation}\label{eq:122612}
			\left\|w_n\right\|^{j-2}_{L^\infty}
			\lesssim \left(n^{-s}n^{\frac{d}{2}}E_n[w_n]\right)^{j-2}. 
		\end{equation}
		Inserting \eqref{eq:122610}, \eqref{eq:122611} and \eqref{eq:122612} into \eqref{eq:12261}, we obtain
		\begin{equation}\label{eq:2}
			\left\|\partial^{\alpha_1}w_n\cdots\partial^{\alpha_j}w_n\right\|_{L^2}\lesssim	n^{|\alpha|-js+\frac{d}{2}(j-1)} E_n[w_n]^j.
		\end{equation}	
		
		\noindent\underline{$(iii)$ The case $|\alpha_1|=|\alpha_2|=|\alpha_3|=1$}. 
		We first treat the sub-case $|\alpha|=3$. Notice that this happens only for $j\geq 3$. Hence we use H\"{o}lder's inequality to compute
		\begin{equation}\label{eq:12270:}
			\left\|\partial^{\alpha_1}w_n\cdots\partial^{\alpha_j}w_n\right\|_{L^2} \leq \left(\prod_{l=1}^3\left\|\partial^{\alpha_l}w_n\right\|_{L^6}\right)  \left\|w_n\right\|^{j-3}_{L^\infty}.
		\end{equation} 
		For each $l\in\{1,2,3\}$, since $d\leq 5$, we can use Gagliardo-Nirenberg's inequality to bound
		\begin{equation*}
			\left\|\partial^{\alpha_l}w_n\right\|_{L^6}
			\lesssim \left\|w_n\right\|_{L^2}^{1-\frac{d}{4}\left(\frac{1}{3}+\frac{1}{d}\right)}\left\|\Delta^2w_n\right\|^{\frac{d}{4}\left(\frac{1}{3}+\frac{1}{d}\right)}_{L^2}
			\lesssim n^{d\left(\frac{1}{3}+\frac{1}{d}\right)-s}E_n[w_n].
		\end{equation*}
		As a consequence, we have
		\begin{equation}\label{eq:122701}
			\left(\prod_{l=1}^3\left\|\partial^{\alpha_l}w_n\right\|_{L^6}\right) \lesssim n^{d+3-3s}E_n[w_n]^3.
		\end{equation} 
		This along with \eqref{eq:122604} tells that
		\begin{equation}\label{eq:3}
			\left\|\partial^{\alpha_1}w_n\cdots\partial^{\alpha_j}w_n\right\|_{L^2}\lesssim n^{|\alpha|-js+\frac{d}{2}(j-1)} E_n[w_n]^j.
		\end{equation}
		
		We next treat the sub-case $|\alpha|=4$. Note that this happens only for $j\geq 4$. Thus, we use H\"{o}lder's inequality to bound
		\begin{equation}\label{eq:12270}
			\left\|\partial^{\alpha_1}w_n\cdots\partial^{\alpha_j}w_n\right\|_{L^2} \leq \left(\prod_{l=1}^4\left\|\partial^{\alpha_l}w_n\right\|_{L^8}\right)  \left\|w_n\right\|^{j-4}_{L^\infty}.
		\end{equation}
		We can argue similarly as in the previous sub-case to see that \eqref{eq:3} still holds for $|\alpha|=4$. 
		
		\noindent\underline{$(iv)$ The case $|\alpha_1|=|\alpha_2|=2$}. Note that this happens only for $j\geq 4$.
		We use H\"{o}lder's inequality to bound
		\begin{equation}\label{eq:12271}
			\left\|\partial^{\alpha_1}w_n\cdots\partial^{\alpha_j}w_n\right\|_{L^2}\leq \left\|\partial^{\alpha_1}w_n\right\|_{L^4} \left\|\partial^{\alpha_2}w_n\right\|_{L^{4}}\left\|w_n\right\|^{j-2}_{L^\infty}.
		\end{equation}
		For $l\in \{1, 2\}$, since $d\leq 5$, we use Gagliardo-Nirenberg's inequality to obtain
		\begin{equation}\label{eq:kk}
			\left\|\partial^{\alpha_l}w_n\right\|_{L^{4}}\lesssim \|w_n\|_{L^2}^{1-\left(\frac{d}{16}+\frac{1}{2}\right)} \|\Delta^2w_n\|_{L^2}^{\frac{d}{16}+\frac{1}{2}}\lesssim n^{\frac{d}{4}+2-s}E_n[w_n]. 
		\end{equation}
		As a consequence, we have
		\begin{equation}\label{eq:122710}
			\left\|\partial^{\alpha_1}w_n\right\|_{L^4} \left\|\partial^{\alpha_2}w_n\right\|_{L^{4}} \lesssim n^{\frac{d}{2}+4-2s}E_n[w_n]^2. 
		\end{equation}
		Plugging \eqref{eq:122710} and \eqref{eq:122612} into \eqref{eq:12271} we get
		\begin{equation}\label{eq:4}
			\left\|\partial^{\alpha_1}w_n\cdots\partial^{\alpha_j}w_n\right\|_{L^2}\lesssim	n^{|\alpha|-js+\frac{d}{2}(j-1)} E_n[w_n]^j.
		\end{equation}
		
		\noindent\underline{$(v)$ The case $|\alpha_1|=2$ and $|\alpha_2|=|\alpha_3|=1$}. This happens only for $j\geq 4$. We 
		use H\"{o}lder's inequality to bound
		\begin{equation}\label{eq:12272}
			\left\|\partial^{\alpha_1}w_n\cdots\partial^{\alpha_j}w_n\right\|_{L^2}\leq \left\|\partial^{\alpha_1}w_n\right\|_{L^4} \left\|\partial^{\alpha_2}w_n\right\|_{L^{8}}\left\|\partial^{\alpha_3}w_n\right\|_{L^{8}}\left\|w_n\right\|^{j-3}_{L^\infty}.
		\end{equation}
		For $l\in\{2,3\}$, since $d\leq 5$, we use Gagliardo-Nirenberg's inequality to bound
		\begin{equation*}
			\|\partial^{\alpha_l}w_n\|_{L^8}\lesssim \|w_n\|_{L^2}^{1-\frac{d}{4}\left(\frac{3}{8}+\frac{1}{d}\right)}\|\Delta^2w_n\|_{L^2}^{\frac{d}{4}\left(\frac{3}{8}+\frac{1}{d}\right)}\lesssim n^{d\left(\frac{3}{8}+\frac{1}{d}\right)-s}E_n[w_n]. 
		\end{equation*}
		Consequently, we have
		\begin{equation}\label{eq:122721}
			\left\|\partial^{\alpha_2}w_n\right\|_{L^{8}}\left\|\partial^{\alpha_3}w_n\right\|_{L^{8}}\lesssim n^{2d\left(\frac{3}{8}+\frac{1}{d}\right)-2s}E_n[w_n]^2 
		\end{equation}
		Note that we can use \eqref{eq:kk} to bound the first factor in \eqref{eq:12272} and \eqref{eq:122604} to bound the last factor on the right hand \eqref{eq:12272}. Thus inserting \eqref{eq:kk}, \eqref{eq:122604} and \eqref{eq:122721} into \eqref{eq:12272}, we obtain
		\begin{equation}\label{eq:5}
			\left\|\partial^{\alpha_1}w_n\cdots\partial^{\alpha_j}w_n\right\|_{L^2}\lesssim	n^{|\alpha|-js+\frac{d}{2}(j-1)} E_n[w_n]^j.
		\end{equation}
		
		Note that, $(i)$ and $(ii)$ cover all the cases $|\alpha|=2$, $(i), (ii)$ and $(iii)$ cover all the cases $|\alpha|=3$, and $(i), (ii), (iii), (iv)$ and $(v)$ cover all the cases $|\alpha|=4$. Therefore, collecting \eqref{eq:1}, \eqref{eq:2}, \eqref{eq:3}, \eqref{eq:4} and \eqref{eq:5} altogether  
		we deduce from \eqref{eq:0} that  \eqref{p-wnj-L2} holds. 
		This finishes the proof of Lemma \ref{lem:dwn}.
	\end{proof}

	\begin{lemma} (Interaction between $u^L$ and $w_n$) \label{lem:dwnuL}
		For every $1\leq j , l \leq 4$
		and any multi-index $|\alpha|\leq 4$, we have
		\begin{equation} \label{p-uL-wn-L2}
			\left\|\partial^\alpha\left(\left(u^L\right)^jw_n^l\right)\right\|_{L^2}
			\lesssim n^{|\alpha| -ls+ \frac{d}{2}(l-1)}E_n[w_n]^{l}.
		\end{equation}
	\end{lemma}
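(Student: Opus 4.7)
The plan is to reduce the mixed product $(u^L)^j w_n^l$ to a combination of the single-factor estimates already established in Lemma \ref{Lem-uL-L9-pnu} and Lemma \ref{lem:dwn}. Since $u^L$ is a smooth, essentially $O(1)$ function while $w_n^l$ carries all the $n$-dependence, the natural way to pair things up is $L^\infty \times L^2$ via H\"older.

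First, I apply Leibniz's rule to distribute $\partial^\alpha$ across the two factors $(u^L)^j$ and $w_n^l$, writing
\begin{align*}
\partial^\alpha\bigl((u^L)^j w_n^l\bigr)
= \sum_{\alpha_1+\alpha_2=\alpha} \binom{\alpha}{\alpha_1}\,
\partial^{\alpha_1}\!\bigl((u^L)^j\bigr)\, \partial^{\alpha_2}\!\bigl(w_n^l\bigr).
\end{align*}
Then I take $L^2$-norm, use the triangle inequality, and on each summand apply H\"older's inequality in the form $\|fg\|_{L^2}\leq \|f\|_{L^\infty}\|g\|_{L^2}$, obtaining
\begin{align*}
\bigl\|\partial^\alpha\bigl((u^L)^j w_n^l\bigr)\bigr\|_{L^2}
\lesssim \sum_{\alpha_1+\alpha_2=\alpha} \bigl\|\partial^{\alpha_1}\!\bigl((u^L)^j\bigr)\bigr\|_{L^\infty}\, \bigl\|\partial^{\alpha_2}\!\bigl(w_n^l\bigr)\bigr\|_{L^2}.
\end{align*}

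Next, for each index split I invoke Lemma \ref{Lem-uL-L9-pnu} with $p=\infty$ to get $\|\partial^{\alpha_1}((u^L)^j)\|_{L^\infty}\lesssim 1$ uniformly in $t\geq 0$ (this is where smoothness and Schwartz decay of $u_0$, together with the unitarity of $e^{it\Delta^2}$ on $H^d$, do all the work). For the remainder factor, I invoke Lemma \ref{lem:dwn} with exponent $l$ and multi-index $\alpha_2$, yielding
\begin{align*}
\bigl\|\partial^{\alpha_2}\!\bigl(w_n^l\bigr)\bigr\|_{L^2}
\lesssim n^{|\alpha_2|-ls+\frac{d}{2}(l-1)}\,E_n[w_n]^l.
\end{align*}
Since $|\alpha_2|\leq |\alpha|$ and (for large $n$) $n\geq 1$, the power $n^{|\alpha_2|}$ is at most $n^{|\alpha|}$, so each summand is dominated by $n^{|\alpha|-ls+\frac{d}{2}(l-1)}E_n[w_n]^l$. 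Summing the finitely many Leibniz terms then gives \eqref{p-uL-wn-L2}.

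The proof is essentially bookkeeping, so there is no deep obstacle; the only thing to double-check is that the exponent range $1\leq j,l\leq 4$ and $|\alpha|\leq 4$ lies within the regime where Lemmas \ref{Lem-uL-L9-pnu} and \ref{lem:dwn} both apply, which it does. One mild pitfall to avoid is trying to split the $L^2$ norm in a symmetric way (e.g.\ $L^4\times L^4$), which would force Gagliardo--Nirenberg on the $u^L$ factor and spoil the desired $O(1)$ behavior; pairing as $L^\infty\times L^2$ is what cleanly isolates all of the $n$-dependence into $w_n^l$.
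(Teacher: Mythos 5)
Your argument is correct and coincides with the paper's proof: the same Leibniz expansion, the same $L^\infty\times L^2$ H\"older pairing, and the same invocation of Lemma \ref{Lem-uL-L9-pnu} for the $(u^L)^j$ factor and Lemma \ref{lem:dwn} for the $w_n^l$ factor, followed by bounding $n^{|\alpha_2|}\leq n^{|\alpha|}$ and summing the finitely many terms.
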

	
	\begin{proof}
		Using consecutively Leibniz's rule and H\"older's inequality,  {we get}
		\begin{align}
			\left\|\partial^\alpha \left(\left(u^L\right)^jw_n^l\right)\right\|_{L^2}
			\lesssim&  \sum_{\alpha_1+\alpha_2=\alpha}
			\left\|\partial^{\alpha_1} \left((u^L)^j\right) \partial^{\alpha_2} \left(w^l_n\right)\right\|_{L^2} \notag \\
			\lesssim&  \sum_{\alpha_1+\alpha_2=\alpha}
			\|\partial^{\alpha_1} \left((u^L)^j\right) \|_{L^{\infty}}
			\| \partial^{\alpha_2} \left(w^l_n\right) \|_{L^2}.
		\end{align}
		We are now in a position to apply Lemmas \ref{Lem-uL-L9-pnu} and \ref{lem:dwn} to obtain
		\begin{align*}
			\left\|\partial^\alpha \left(\left(u^L\right)^jw_n^l\right)\right\|_{L^2}
			\lesssim \sum_{|\alpha_2|\leq |\alpha|} \left\|\partial^{\alpha_2}\left(w^l_n\right)\right\|_{L^2}
			\lesssim  \sum_{|\alpha_2|\leq |\alpha|} n^{|\alpha_2| -ls+\frac{d}{2}(l-1)}E_n[w_n]^{l}
			\lesssim n^{|\alpha| -ls + \frac{d}{2}(l-1)}E_n[w_n]^{l},
		\end{align*}
		which yields \eqref{p-uL-wn-L2}.
	\end{proof} 
	
	\begin{lemma} (Interaction between $v_n$ and $w_n$) \label{lem:dvnwn}
		For every $1\leq j, l\leq 4$
		and any multi-index $|\alpha|\leq 4$
		we have
		\begin{equation}   \label{p-vnj-wnl-L2} 
			\left\|\partial^\alpha \left(v_n^jw_n^l\right)\right\|_{L^2}
			\lesssim \left(\kappa_nn^{q_1}\right)^j\left(\log n\right)^{|\alpha|\delta_2}n^{|\alpha| -ls + \frac{d}{2}(l-1)}E_n[w_n]^l.
		\end{equation}
	\end{lemma}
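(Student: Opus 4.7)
The plan is to mimic the proof of Lemma \ref{lem:dwnuL}, with Lemma \ref{lem:con:vn} playing the role previously played by Lemma \ref{Lem-uL-L9-pnu}. That is, I would first distribute the derivative $\partial^\alpha$ by the Leibniz rule between the two factors $v_n^j$ and $w_n^l$, and then apply H\"older's inequality to separate them in $L^\infty \times L^2$. Schematically,
\begin{align*}
\left\|\partial^\alpha \left(v_n^j w_n^l\right)\right\|_{L^2}
\lesssim \sum_{\alpha_1+\alpha_2=\alpha}
\left\|\partial^{\alpha_1}\left(v_n^j\right)\right\|_{L^\infty}
\left\|\partial^{\alpha_2}\left(w_n^l\right)\right\|_{L^2}.
\end{align*}

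The point of putting the $v_n$-factor in $L^\infty$ is that Lemma \ref{lem:con:vn} (applied with $p=\infty$) supplies exactly the bound
\begin{align*}
\left\|\partial^{\alpha_1}\left(v_n^j\right)\right\|_{L^\infty}
\lesssim \left(\kappa_n n^{q_1}\right)^j \left(\log n\right)^{|\alpha_1|\delta_2} n^{|\alpha_1|},
\end{align*}
which carries the desired prefactor $(\kappa_n n^{q_1})^j$ and a logarithmic loss that is majorized by $(\log n)^{|\alpha|\delta_2}$ since $|\alpha_1|\leq |\alpha|$ and $\log n \geq 1$ for $n$ large. Meanwhile, Lemma \ref{lem:dwn} gives
\begin{align*}
\left\|\partial^{\alpha_2}\left(w_n^l\right)\right\|_{L^2}
\lesssim n^{|\alpha_2| - ls + \frac{d}{2}(l-1)} E_n[w_n]^l,
\end{align*}
which produces the $E_n[w_n]^l$ and the correct power of $n$ tied to $w_n$.

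Combining the two factors and using $|\alpha_1|+|\alpha_2|=|\alpha|$ to merge the powers of $n$, together with the monotonicity of $(\log n)^{\cdot \,\delta_2}$ in the exponent, each summand is bounded by the right-hand side of \eqref{p-vnj-wnl-L2}. Summing over the finitely many splittings $\alpha_1+\alpha_2=\alpha$ with $|\alpha|\leq 4$ yields the claim. I do not expect any real obstacle here: the only things to verify carefully are that $p=\infty$ is within the admissible range of Lemma \ref{lem:con:vn} and that $|\alpha_2|\leq |\alpha|\leq 4$ is within the admissible range of Lemma \ref{lem:dwn}, both of which hold by hypothesis. In this sense this lemma is a direct analogue of Lemma \ref{lem:dwnuL}, with $v_n$ replacing $u^L$ and the corresponding $v_n$-prefactors tracked along the way.
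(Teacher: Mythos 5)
Your proposal is correct and coincides with the paper's own proof: the paper also applies Leibniz's rule, splits via H\"older into $\left\|\partial^{\alpha_1}\left(v_n^j\right)\right\|_{L^\infty}\left\|\partial^{\alpha_2}\left(w_n^l\right)\right\|_{L^2}$, and invokes Lemma \ref{lem:con:vn} (with $p=\infty$) and Lemma \ref{lem:dwn} exactly as you do. The range checks you note ($p=\infty$ admissible, $|\alpha_2|\leq 4$, $l\leq 4$) are indeed satisfied, so there is no gap.
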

	
	\begin{proof}
		Using consecutively Leibniz's rule, H\"{o}lder's inequality, Lemmas \ref{lem:con:vn} and \ref{lem:dwn}, we compute
		\begin{align*}
			\left\|\partial^\alpha \left(v_n^jw_n^l\right)\right\|_{L^2}
			&\lesssim \sum_{\alpha_1+\alpha_2=\alpha}
			\left\|\partial^{\alpha_1} \left(v^j_n\right) \partial^{\alpha_2} \left(w^l_n\right)\right\|_{L^2}\\
			&\lesssim \sum_{\alpha_1+\alpha_2=\alpha}
			{	\left\|\partial^{\alpha_1} \left(v^j_n\right)\right\|_{L^\infty}}\left\|\partial^{\alpha_2} \left(w^l_n\right)\right\|_{L^2}\\
			&\lesssim \sum_{\alpha_1+\alpha_2=\alpha}
			\left(\kappa_n n^{q_1}\right)^{j}\left(\log n\right)^{|\alpha_1|\delta_2}n^{|\alpha_1|} n^{|\alpha_2| -ls + \frac{d}{2}(l-1)}E_n[w_n]^{l}\\
			&\lesssim \left(\kappa_n n^{q_1}\right)^{j} 
			\left(\log n\right)^{|\alpha|\delta_2}n^{|\alpha| -ls + \frac{d}{2}(l-1)}E_n[w_n]^{l}.
		\end{align*} 
	\end{proof}

	We are now ready to prove Proposition \ref{Lem-D2Gn-L2}.
	\begin{proof}[Proof of Proposition \ref{Lem-D2Gn-L2}] 
		
		We first use Leibniz's rule 
		and  Minkowski's inequality to compute
		\begin{align}\label{eq:2101}
			n^{s-4}\|\Delta^2G_n\|_{L^2} 
			\lesssim& n^{s-4}\sum_{j=0}^4\left\|\Delta^2(v_n^j(u^L)^{5-j})\right\|_{L^2} + n^{s-4}\sum_{j=0}^4\left\|\Delta^2\left(v_n^j\left(u^L\right)^{4-j}w_n\right)\right\|_{L^2} \notag \\    &+n^{s-4}\sum_{j=0}^3\left\|\Delta^2\left(v_n^j\left(u^L\right)^{3-j}w_n^2\right)\right\|_{L^2}+n^{s-4}\sum_{j=0}^2\left\|\Delta^2\left(v_n^j\left(u^L\right)^{2-j}w_n^3\right)\right\|_{L^2}\nonumber\\  
			&+n^{s-4}\sum_{j=0}^1\left\|\Delta^2\left(v_n^j\left(u^L\right)^{1-j}w_n^4\right)\right\|_{L^2}+n^{s-4}\left\|\Delta^2\left(w^5_n\right)\right\|_{L^2}\\
			&=:I+II+III+IV+V+VI\nonumber. 
		\end{align}
		
		We will control terms $I,II,\dots,VI$ one by one.
		
		\noindent\underline{To bound $I$}.
		For $j=0$, by Lemma \ref{Lem-uL-L9-pnu} we have
		\begin{equation}\label{eq:2102}
			\left\|\Delta^2\left((u^L)^5\right)\right\|_{L^2}\lesssim 1. 
		\end{equation} 
		To continue, we use Lemmas \ref{lem:con:vn} and \ref{Lem-uL-L9-pnu} to compute
		\begin{align}\label{eq:121}
			\sum_{j=1}^4\left\|\Delta^2\left(v_n^j(u^L)^{5-j}\right)\right\|_{L^2}
			&\lesssim \sum_{j=1}^4\sum_{|\alpha_1|+|\alpha_2|= 4}
			\left\|\partial^{\alpha_1} \left(v^j_n\right)\  \partial^{\alpha_2}\left((u^L)^{5-j}\right)\right\|_{L^2}\nonumber\\
			&\lesssim \sum_{j=1}^4\sum_{|\alpha_1|+|\alpha_2|= 4}
			\left\|\partial^{\alpha_1} \left(v^j_n\right) \right\|_{L^2}
			\left\|\partial^{\alpha_2} \left((u^L)^{5-j}\right)\right\|_{L^\infty}\nonumber\\
			&\lesssim  \sum_{j=1}^4\sum_{|\alpha_1|+|\alpha_2|= 4}
			\left(\kappa_n n^{q_1}\right)^{j}\left(\log n\right)^{|\alpha_1|\delta_2}n^{|\alpha_1|-\frac{d}{2}}\\
			&\lesssim  \sum_{j=1}^4\left(\kappa_n n^{q_1}\right)^{j}\left(\log n\right)^{4\delta_2}n^{4-\frac{d}{2}}\nonumber.
		\end{align}
		Putting \eqref{eq:2102} and \eqref{eq:121} together and multiplying the resulted inequality by $n^{s-4}$, {we get}
		\begin{equation}\label{eq:2200}
			n^{s-4}\sum_{j=0}^4\left\|\Delta^2(v_n^j(u^L)^{5-j})\right\|_{L^2}\lesssim n^{s-4}\left(1+\left(\kappa_n n^{q_1}\right)^{4}\left(\log n\right)^{4\delta_2}n^{4-\frac{d}{2}}\right). 
		\end{equation}
		
		\noindent\underline{To bound $II$}.
		For the case $j=0$, we apply Lemma \ref{lem:dwnuL} with $j=4$, $l=1$, $|\alpha|=4$ to see
		\begin{equation}\label{eq:1212}
			\left\|\Delta^2\left((u^L)^4w_n\right)\right\|_{L^2}
			\lesssim n^{4-s}E_n[w_n].
		\end{equation}
		Next, we use Lemmas \ref{Lem-uL-L9-pnu} and \ref{lem:dvnwn} to obtain 
		\begin{align}\label{eq:1211}
			\sum_{j=1}^4\left\|\Delta^2\ \left(v_n^j\left(u^L\right)^{4-j}w_n\right)\right\|_{L^2}
			&\lesssim\sum_{j=1}^4 \sum_{|\alpha_1|+|\alpha_2|=4}
			\left\|\partial^{\alpha_1} \left(v_n^jw_n\right)\  \partial^{\alpha_2} \left((u^L)^{4-j}\right)\right\|_{L^2}\nonumber\\
			&\lesssim \sum_{j=1}^4\sum_{|\alpha_1|+|\alpha_2|=4}
			\left\|\partial^{\alpha_1} \left(v_n^jw_n\right)\right\|_{L^2} \left\|\partial^{\alpha_2}\left((u^L)^{4-j}\right)\right\|_{L^\infty}\\
			&\lesssim \sum_{j=1}^4\left(\kappa_nn^{q_1}\right)^j\left(\log n\right)^{4\delta_2}n^{4-s}E_n[w_n]\nonumber.
		\end{align}
		Putting \eqref{eq:1212} and \eqref{eq:1211} together and multiplying the resulted inequality by $n^{4-s}$, we get
		{\begin{equation}\label{eq:2201}
				n^{s-4}\sum_{j=0}^4
				\left\|\Delta^2\left(v_n^j\left(u^L\right)^{4-j}w_n\right)\right\|_{L^2} 
				\lesssim \left(\kappa_nn^{q_1}\right)^4\left(\log n\right)^{4\delta_2}E_n[w_n].
		\end{equation}}
		
		\noindent\underline{To bound $III$}. First, by Lemmas \ref{Lem-uL-L9-pnu} and \ref{lem:dwnuL}
		with $|\alpha|=4$, $j=3$ and $l=2$, we have
		\begin{equation}\label{eq:123}
			\left\|\Delta^2\left((u^L)^3w_n^2\right)\right\|_{L^2}\lesssim n^{4-2s+ \frac{d}{2}}E_n[w_n]^2.
		\end{equation}
		Next, we apply Lemmas \ref{Lem-uL-L9-pnu} and \ref{lem:dvnwn} to compute
		\begin{align}\label{eq:124}
			\sum_{j=1}^3\left\|\Delta^2\left(v_n^j\left(u^L\right)^{3-j}w_n^2\right)\right\|_{L^2}
			&\lesssim \sum_{j=1}^3\sum_{|\alpha_1|+|\alpha_2|=4}
			\left\|\partial^{\alpha_1} \left(v^j_nw^2_n\right) \partial^{\alpha_2} \left((u^L)^{3-j}\right)\right\|_{L^2}\nonumber\\
			&\lesssim \sum_{j=1}^3\sum_{|\alpha_1|\leq 4}
			\left\|\partial^{\alpha_1} \left(v^j_nw^2_n\right)\right\|_{L^2} 
			\left\| \partial^{\alpha_2} \left((u^L)^{3-j}\right) \right\|_{L^\infty}\nonumber \\
			&\lesssim  \sum_{j=1}^3\sum_{|\alpha_1|\leq 4}
			\left(\kappa_n n^{q_1}\right)^j\left(\log n\right)^{|\alpha_1|\delta_2}
			n^{|\alpha_1|-2s + \frac{d}{2}}E_n[w_n]^2\\
			&\lesssim \sum_{j=1}^3\left(\kappa_nn^{q_1}\right)^j\left(\log n\right)^{4\delta_2}n^{4 -2s + \frac{d}{2}}E_n[w_n]^2.\nonumber
		\end{align}
		Then it follows from \eqref{eq:123} and \eqref{eq:124} that
		\begin{equation}\label{eq:2202}
			n^{s-4}\sum_{j=0}^3\left\|\Delta^2\left(v_n^j\left(u^L\right)^{3-j}w_n^2\right)\right\|_{L^2}\lesssim \left(\kappa_nn^{q_1}\right)^3\left(\log n\right)^{4\delta_2}n^{-s+\frac{d}{2}}E_n[w_n]^2.
		\end{equation}
		
		\noindent\underline{To bound $IV$}.
		For $j=0$, by Lemma \ref{lem:dwnuL}, we have  
		\begin{equation}\label{eq:125}
			\left\|\Delta^2\left((u^L)^2w_n^3\right)\right\|_{L^2}\lesssim n^{4-3s+d}E_n[w_n]^3.
		\end{equation}
		{To continue, we use Lemmas \ref{Lem-uL-L9-pnu} and \ref{lem:dvnwn} to compute
			\begin{align}\label{eq:126}
				\sum_{j=1}^2\left\|\Delta^2\left(v_n^j\left(u^L\right)^{2-j}w_n^3\right)\right\|_{L^2}
				&\lesssim \sum_{j=1}^2\sum_{|\alpha_1|+|\alpha_2|=4}
				\left\|\partial^{\alpha_1} \left(v^j_nw^3_n\right) \partial^{\alpha_2}  \left((u^L)^{2-j}\right)\right\|_{L^2} \notag \\
				&\lesssim \sum_{j=1}^2\sum_{|\alpha_1|\leq 4}
				\left\|\partial^{\alpha_1} \left(v^j_nw^3_n\right)\right\|_{L^2} 
				\|\partial^{\alpha_2}  \left((u^L)^{2-j}\right)\|_{L^\infty}\notag  \\
				&\lesssim  \sum_{j=1}^2\sum_{|\alpha_1|\leq 4}
				\left(\kappa_nn^{q_1}\right)^j\left(\log n\right)^{|\alpha_1|\delta_2}
				n^{|\alpha_1|-3s+d}E_n[w_n]^3  \nonumber  \\
				&\lesssim \sum_{j=1}^2\left(\kappa_nn^{q_1}\right)^j
				\left(\log n\right)^{4\delta_2} n^{4-3s+d}E_n[w_n]^3. 
		\end{align}} 
		It then follows from \eqref{eq:125} and \eqref{eq:126} that
		\begin{equation}\label{eq:2203}
			n^{s-4}\sum_{j=0}^2\left\|\Delta^2\left(v_n^j\left(u^L\right)^{2-j}w_n^3\right)\right\|_{L^2}\lesssim \left(\kappa_nn^{q_1}\right)^2\left(\log n\right)^{4\delta_2}n^{-2s+d}E_n[w_n]^3.
		\end{equation}
		
		\noindent\underline{To bound $V$}.
		For $j=0$, we get from Lemma \ref{lem:dwnuL} that
		\begin{equation}\label{eq:127}
			\left\|\Delta^2\left(u^Lw_n^4\right)\right\|_{L^2}\lesssim n^{4-4s+ \frac{3d}{2}}E_n[w_n]^4.
		\end{equation}
		For $j=1$, we use Lemma \ref{lem:dvnwn} to obtain
		\begin{equation}\label{eq:128}
			\left\|\Delta^2\left(v_nw_n^4\right)\right\|_{L^2} 
			\lesssim \left(\kappa_nn^{q_1}\right)\left(\log n\right)^{4\delta_2}n^{4 -4s+ \frac{3d}{2}}E_n[w_n]^4. 
		\end{equation}
		It then follows \eqref{eq:127} and \eqref{eq:128} that
		\begin{equation}\label{eq:2204}
			n^{s-4}\sum_{j=0}^1\left\|\Delta^2\left(v_n^j\left(u^L\right)^{1-j}w_n^4\right)\right\|_{L^2}\lesssim \left(\kappa_nn^{q_1}\right)\left(\log n\right)^{4\delta_2}n^{-3s+\frac{3d}{2}}E_n[w_n]^4.
		\end{equation}
		
		\noindent\underline{To bound $VI$}.
		It follows from Lemma \ref{lem:dwn} 
		with $j=5$ and $|\alpha|=4$ that
		\begin{equation}
			\left\|\Delta^2\left(w^5_n\right)\right\|_{L^2}\lesssim n^{4 -5s+ 2d}E_n[w_n]^5,
		\end{equation}
		and hence
		\begin{equation}\label{eq:2205}
			n^{s-4}\left\|\Delta^2\left(w^5_n\right)\right\|_{L^2}\lesssim  n^{-4s+2d}E_n[w_n]^5.
		\end{equation}
		
		Therefore,
		plugging estimates \eqref{eq:2200}, \eqref{eq:2201}, \eqref{eq:2202}, \eqref{eq:2203},
		\eqref{eq:2204} and \eqref{eq:2205} into \eqref{eq:2101}  we obtain
		\eqref{eq:2206},
		which completes the proof.	
	\end{proof}
	
	The next result shows that, 
	the semi-classical energy of the remainder is indeed smaller than $\kappa_n$ on the short time interval $[0,t_n]$.

	\begin{proposition}\label{claim:1}
		For $n$ sufficiently large, we have
		\begin{equation*}
			E_{n}[w_n](t)<\kappa_n,\ \ \forall t\in [0,t_n]. 
		\end{equation*}
	\end{proposition}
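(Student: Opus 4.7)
The plan is a standard continuity/bootstrap argument on $[0,t_n]$. Since $w_n(0)=0$ forces $E_n[w_n](0)=0$, I would fix a target $\delta_n\ll\kappa_n$ and aim to show that the relatively open set
\[
I_n:=\bigl\{t\in[0,t_n]\,:\,E_n[w_n](s)\leq\delta_n\text{ for all }s\in[0,t]\bigr\}
\]
coincides with $[0,t_n]$; by continuity this reduces to proving the strict improvement $E_n[w_n](t)\leq\delta_n/2$ throughout $I_n$, which a fortiori yields the strict inequality $E_n[w_n](t)<\kappa_n$ asserted in Proposition \ref{claim:1}.

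To produce the improvement, I would substitute Propositions \ref{Lem-Gn-L2} and \ref{Lem-D2Gn-L2} into the energy inequality \eqref{eq:energy:w:n:r}. On $I_n$ every polynomial factor $E_n[w_n]^j$, $j=2,\dots,5$, is majorised by $\delta_n^j$, so the differential inequality reduces to the linear one
\[
\left|\tfrac{d}{dt}E_n[w_n](t)\right|\leq \widetilde F_n(\delta_n)+A_n\,E_n[w_n](t),\qquad t\in I_n,
\]
where $A_n\lesssim(\kappa_n n^{q_1})^4(\log n)^{4\delta_2}$ is the linear coefficient and $\widetilde F_n(\delta_n)$ collects the $v_n$- and $u^L$-sources together with the $\delta_n^j$ contributions. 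Gr\"onwall's inequality with $E_n[w_n](0)=0$ then yields $E_n[w_n](t)\leq e^{A_n t_n}\,t_n\,\widetilde F_n(\delta_n)$. The calibration \eqref{time} of $t_n$ is arranged exactly to produce the identity $A_n t_n\lesssim(\log n)^{5\delta_2}$, so $e^{A_n t_n}$ remains sub-polynomial in $n$ provided $\delta_2<1/5$.

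The quantitative core is the selection of $\delta_n$. The decisive $E_n[w_n]$-independent source, produced by $\Delta^2 v_n$ and $\Delta^4 v_n$ and controlled via Lemma \ref{lem:con:vn}, contributes
\[
e^{A_n t_n}\,\kappa_n^{-3}(\log n)^{9\delta_2}\,n^{-4(s_{cr}(d)-s)},
\]
polynomially small in $n^{-1}$ precisely because $s<s_{cr}(d)$ forces the $n$-exponent to be negative; the remaining $u^L$- and $v_n$-sources are strictly smaller under $0<s<s_{cr}(d)$ and $3\leq d\leq 5$. The nonlinear contributions $e^{A_n t_n}\,t_n\,B_{n,j}\,\delta_n^j$ reduce, using $q_1=d/2-s$, to expressions of size $e^{A_n t_n}(\log n)^{5\delta_2}(\delta_n/\kappa_n)^{j-1}\delta_n$; requiring each to be $\leq\delta_n/8$ imposes its tightest constraint at $j=2$, namely $\delta_n\lesssim\kappa_n e^{-(\log n)^{5\delta_2}}(\log n)^{-5\delta_2}$. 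This nonlinear upper bound on $\delta_n$ and the forcing lower bound above are compatible for $n$ large because their ratio is a sub-polynomial factor times $n^{-4(s_{cr}(d)-s)}\to0$; any $\delta_n$ in the admissible range closes the bootstrap.

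The hardest part is the simultaneous balance encoded in the calibration \eqref{eq:para}--\eqref{time}: $t_n$ must be long enough for the ODE oscillation in \eqref{vn-V} to inflate $\|v_n(t_n)\|_{H^s}$ via \eqref{eq:091504}, yet short enough that the $\Delta^2 v_n$ forcing and every interaction between $v_n$, $u^L$ and $w_n$ inside $G_n$ stays beneath the barrier $\delta_n$. The constraint $\delta_2<1/5$ is precisely what keeps the Gr\"onwall multiplier sub-polynomial in $n$, after which $\kappa_n=(\log\log n)^{-\delta_1}$ accommodates any $\delta_1>0$, and the bootstrap closes for all sufficiently large $n$.
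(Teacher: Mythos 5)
Your proof is correct and takes essentially the same route as the paper: a continuity/bootstrap argument on $[0,t_n]$, inserting Propositions \ref{Lem-Gn-L2} and \ref{Lem-D2Gn-L2} into \eqref{eq:energy:w:n:r} and applying Gr\"onwall with the calibration $A_n t_n\lesssim(\log n)^{5\delta_2}$ coming from \eqref{time}; the paper merely bootstraps directly at the threshold $\kappa_n$, absorbing the $j\geq 2$ terms into the linear coefficient via $E_n[w_n]<\kappa_n$, instead of introducing an auxiliary level $\delta_n\ll\kappa_n$. One cosmetic remark: for some admissible $(d,s)$ the $u^Lv_n^4$-type source (of size $n^{-q_1}$ up to logarithms after multiplying by $t_n$), rather than the $\Delta^2 v_n$ one, is the largest $E_n$-independent contribution, but all such sources are polynomially small in $n$, so your choice of $\delta_n$ still closes the bootstrap.
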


	\begin{proof} 
		For each $n$, we define the set
		\begin{equation} \label{eq:In}
			I_n:=\left\{t\in[0,t_n]\ \big|\ \sup_{0\leq \tau\leq t}E_n[w_n](\tau)<\kappa_n\right\}.
		\end{equation}
		Since $E_n[w_n](0)=0$, the initial time $0$ is in $I_n$ and hence $I_n$ is nonempty. 
		Since $E_n[w_n](t)$ is continuous in $t\in [0,t_n]$,  so is $\sup_{0\leq \tau\leq t}E_n[w_n(\tau)]$. 
		It follows that $I_n$ is open in $[0,t_n]$. 
		
		Thanks to the fact that the interval $[0,t_n]$ is connected, in order to show that $I_n=[0,t_n]$, it suffices to prove that $I_n$ is closed. 
		For this, we fix $T_n:=\sup_{t\in I_n}t$. 
		Then, by the definition of $I_n$, we have $[0,T_n)\subseteq I_n$.  
		Inserting \eqref{Gn-L2-En} and \eqref{eq:2206} into \eqref{eq:energy:w:n:r} we obtain
		\begin{align} \label{eq:2207}
			\frac{d}{dt}E_n[w_n]
			\lesssim& \kappa_nn^{q_1}\left(\log n\right)^{8\delta_2}n^{4+s-\frac{d}{2}}
			+ n^{s}\left(1+\left(\kappa_n n^{q_1}\right)^{4}\left(\log n\right)^{4\delta_2}n^{-\frac{d}{2}}\right) \notag \\ 
			&+\left(\log n\right)^{4\delta_2} 
			{\sum\limits_{j=1}^5 (\kappa_n n^{q_1})^{5-j} n^{-(j-1)(s-\frac d2)} E_n[w_n]^j.}  
		\end{align}

		Since $0<s<s_{cr}(d)$ and $s_{cr}(d)=\frac{d}{2}-1$, we have
		\begin{equation}
			4<4q_1,\ \  s<4q_1.
		\end{equation}
		As a consequence,
		there exists a small number $\delta >0$, so that the inequality
		\begin{equation}\label{eq:2208}
			\kappa_nn^{q_1}\left(\log n\right)^{8\delta_2}n^{4+s-\frac{d}{2}}+ n^{s}\left(1+\left(\kappa_n n^{q_1}\right)^{4}\left(\log n\right)^{4\delta_2}n^{-\frac{d}{2}}\right)\lesssim \left(\kappa_n n^{q_1}\right)^{4-\delta}\left(\log n\right)^{8\delta_2}
		\end{equation}
		holds for all $n$ sufficiently large.
		Note also that, for any $t\in [0,T_n)\subseteq [0,t_n]$, we have 
		{\begin{align}\label{eq:129}
				n^{-(j-1)(s-\frac d2)} E_n[w_n]^{j-1} 
				= n^{(j-1)q_1} E_n[w_n]^{j-1}  
				\leq  (\kappa_n n^{q_1})^{j-1}.
		\end{align}  } 
		It follows from \eqref{eq:2208} and \eqref{eq:129} that
		\begin{equation}
			\frac{d}{dt}E_n[w_n]\leq  C_1\left(\kappa_n n^{q_1}\right)^{4-\delta}\left(\log n\right)^{8\delta_2}+C_2\left(\kappa_nn^{q_1}\right)^{4}\left(\log n\right)^{4\delta_2}E_n[w_n],\ \ \forall t\in[0,T_n)
		\end{equation}
		holds for some positive universal constants $C_1$ and $C_2$. 
		Using Gronwall's inequality we get
		\begin{equation}
			E_{n}[w_n](T_n)\leq \frac{ C_1\left(\kappa_n n^{q_1}\right)^{4-\delta}\left(\log n\right)^{8\delta_2}}{C_2\left(\kappa_nn^{q_1}\right)^{4}\left(\log n\right)^{4\delta_2}}e^{C_2T_n \left(\kappa_nn^{q_1}\right)^4\left(\log n\right)^{4\delta_2}}\leq\frac{C_1\left(\log n\right)^{4\delta_2}}{C_2\left(\kappa_n n^{q_1}\right)^{\delta}}e^{C_2\left(\log n\right)^{5\delta_2}}
		\end{equation}
		where in the second inequality we have used 
		the choice of $t_n$ in \eqref{time}. {Taking $\delta_2>0$ sufficiently small
			such that 
			\begin{align*}
				C_2 (\log n)^{5\delta_2} < \frac 14 q_1 \delta \log n 
			\end{align*} 
			we arrive at } 
		\begin{equation}
			E_n[w_n](T_n)<\kappa_n
		\end{equation}
		for $n$ sufficiently large.  
		Therefore, it follows from the definition of $I_n$ that $T_n\in I_n$ and so $I_n$ is closed. 
		This completes the proof of Proposition \ref{claim:1}.
	\end{proof}
	
	We are now in position to prove Proposition \ref{prop:un}. 
	
	\begin{proof}[Proof of Proposition \ref{prop:un}] 	
		We choose the family $\{\psi_n\}$ as in \eqref{psin}, with $q_1$, $\kappa_n$ as in \eqref{eq:para} and $\phi$ being a non-zero compactly supported smooth function. 
		{For each $n\geq 1$, as used in the above, $u_n$ is the global $H^2$ solution to \eqref{eq:main:n}. } 
		
		On the one hand, it follows from \eqref{eq:091503} that
		\begin{equation}
			\|u_n(0,\cdot)-u_0\|_{H^s}=\|\psi_n\|_{H^s}\rightarrow_{n\rightarrow\infty}0.
		\end{equation}
		
		On the other hand, we deduce from the definition of $E_n[w_n]$ and Proposition \ref{claim:1} that  
		for all $t\in [0,t_n]$, there hold 
		\begin{align*}
			& \left\|w_n(t)\right\|_{L^2}\leq n^{-s}E_n[w_n](t)\leq n^{-s}\kappa_n,   \\
			& \left\|\Delta^2w_n(t)\right\|_{L^2}\leq n^{-s+4}E_n[w_n](t)\leq n^{4-s}\kappa_n. 
		\end{align*} 
		Interpolating these two inequalities we obtain for all $t\in [0,t_n]$, 
		\begin{equation}\label{eq:091505}
			\left\|w_n(t)\right\|_{H^s}\lesssim \left\|w_n(t)\right\|^{1-\frac{s}{4}}_{L^2}\left\|\Delta^2 w_n(t)\right\|^{\frac{s}{4}}_{L^2}
			\lesssim \kappa_n \ \to 0,\ \ as\ \ n\to \infty. 
		\end{equation}
		This along with \eqref{eq:091504} 
		and Lemma \ref{Lem-uL-L9-pnu} yields that 
		\begin{equation}
			\|u_n\|_{L^\infty([0,T],H^s)}\geq \|v_n(t_n)\|_{H^s}-\|w_n\|_{L^\infty H^s}-\|u^L\|_{H^s} 
			\to \infty, \ \mathrm{as}\ n\to \infty. 
		\end{equation}
		This completes the proof of Proposition \ref{prop:un}. 
	\end{proof}

	\noindent{\bf Acknowledgment.} 
	Deng Zhang  thanks the support by NSFC (No. 11871337)
	and Shanghai Rising-Star Program 21QA1404500. 
	
	\bibliographystyle{plain}
	\bibliography{bib}	
\end{document}